\documentclass{article}

\usepackage[english]{babel}
\usepackage{geometry}
\geometry{
body={6.6in, 9in},
left=1in,
top=1in
}
\usepackage[round]{natbib}
%%%%%% LaTeX shortcuts

%%%%%% Packages

\usepackage{amsmath,amsfonts,amsthm,bm}
\usepackage{dsfont}
\usepackage{mathtools}
\usepackage{color}
\usepackage{xcolor}
\usepackage{thm-restate}
\usepackage{hyperref}
\usepackage{enumitem}
\usepackage{booktabs}
\usepackage{multirow}
\usepackage{makecell}
\usepackage{subfig}
\usepackage{bbm}

\usepackage{algorithm,algorithmic}
\usepackage[algo2e,ruled,vlined]{algorithm2e}

\usepackage{graphicx}
% \usepackage{subfigure}

% \usepackage[round]{natbib}

%%%%%% Custom macros

% Common symbols

\def\vxh{{\hat{\bm{x}}}}

\newtheorem{lemma}{Lemma}

\newtheorem{assumption}{Assumption}

\newtheorem{definition}{Definition}

% Eric's custom macros
  % Absolute value
  % Norm symbol
  % Inner product
  % Set
  % Parentheses
  % Brackets
% \newcommand\numberthis{\addtocounter{equation}{1}\tag{\theequation}}  % Equation numbering

% Xufeng's customs macros

\newcommand{\innp}[1]{\left\langle#1\right\rangle}  % Inner product

\usepackage{xcolor}
\definecolor{mypurple}{rgb}{0.67, 0.12, 0.47}

%%%%%%%%%% Below is taken from Good Fellow et al's DL Book.
%%%%%%%%%% https://raw.githubusercontent.com/goodfeli/dlbook_notation/master/math_commands.tex

%%%%% NEW MATH DEFINITIONS %%%%%

% Mark sections of captions for referring to divisions of figures

% Highlight a newly defined term

% Figure reference, lower-case.

% Figure reference, capital. For start of sentence

% Section reference, lower-case.

% Section reference, capital.

% Reference to two sections.

% Reference to three sections.

% Reference to an equation, lower-case.
%\def\eqref#1{equation~\ref{#1}}
% Reference to an equation, upper case
%\def\Eqref#1{Equation~\ref{#1}}
% A raw reference to an equation---avoid using if possible

% Reference to a chapter, lower-case.

% Reference to an equation, upper case.

% Reference to a range of chapters

% Reference to an algorithm, lower-case.

% Reference to an algorithm, upper case.

% Reference to a part, lower case

% Reference to a part, upper case

\def\1{\bm{1}}

\def\eps{{\epsilon}}

% Random variables

% rm is already a command, just don't name any random variables m

% Random vectors

% Elements of random vectors

% Random matrices

% Elements of random matrices

% Vectors
\def\vzero{{\bm{0}}}

\def\va{{\bm{a}}}

\def\vg{{\bm{g}}}

\def\vl{{\bm{l}}}

\def\vu{{\bm{u}}}
\def\vv{{\bm{v}}}
\def\vw{{\bm{w}}}
\def\vx{{\bm{x}}}
\def\vy{{\bm{y}}}

% Elements of vectors

% Matrix
\def\mA{{\bm{A}}}
\def\mB{{\bm{B}}}

\def\mE{{\bm{E}}}

\def\mI{{\bm{I}}}

\def\mLambda{{\bm{\Lambda}}}

\def\mGamma{{\bm{\Gamma}}}

% Tensor
\DeclareMathAlphabet{\mathsfit}{\encodingdefault}{\sfdefault}{m}{sl}
\SetMathAlphabet{\mathsfit}{bold}{\encodingdefault}{\sfdefault}{bx}{n}

% Graph

\def\gB{{\mathcal{B}}}

\def\gE{{\mathcal{E}}}

\def\gI{{\mathcal{I}}}

\def\gL{{\mathcal{L}}}

\def\gO{{\mathcal{O}}}

\def\gS{{\mathcal{S}}}
\def\gT{{\mathcal{T}}}

% Sets

% Don't use a set called E, because this would be the same as our symbol
% for expectation.

\def\sR{{\mathbb{R}}}

% Entries of a matrix

% entries of a tensor
% Same font as tensor, without \bm wrapper

% The true underlying data generating distribution

% The empirical distribution defined by the training set

% The model distribution

% Stochastic autoencoder distributions

 % Laplace distribution

\newcommand{\E}{\mathbb{E}}

\newcommand{\R}{\mathbb{R}}

% Wolfram Mathworld says $L^2$ is for function spaces and $\ell^2$ is for vectors
% But then they seem to use $L^2$ for vectors throughout the site, and so does
% wikipedia.

 % See usage in notation.tex. Chosen to match Daphne's book.

\DeclareMathOperator*{\argmax}{arg\,max}
\DeclareMathOperator*{\argmin}{arg\,min}

\newcommand{\footremember}[2]{%
    \footnote{#2}
    \newcounter{#1}
    \setcounter{#1}{\value{footnote}}%
}
\newcommand{\footrecall}[1]{%
    \footnotemark[\value{#1}]%
}

\usepackage[utf8]{inputenc} % allow utf-8 input
\usepackage[T1]{fontenc}    % use 8-bit T1 fonts
\usepackage{hyperref}       % hyperlinks
\usepackage{url}            % simple URL typesetting
\usepackage{booktabs}       % professional-quality tables
\usepackage{amsfonts}       % blackboard math symbols
\usepackage{nicefrac}       % compact symbols for 1/2, etc.
\usepackage{microtype}      % microtypography
\usepackage{xcolor}         % colors

\title{Empirical Risk Minimization with Shuffled SGD:\\ A Primal-Dual Perspective and Improved Bounds\footnote{This material is based upon research supported in part by the U.\ S.\ Office of Naval Research under award number N00014-22-1-2348.}}

\author{Xufeng Cai\footremember{wisc}{Department of Computer Sciences, University of Wisconsin-Madison. XC (\href{mailto:xcai74@wisc.edu}{xcai74@wisc.edu}), CYL (\href{mailto:cylin@cs.wisc.edu}{cylin@cs.wisc.edu}), JD (\href{mailto:jelena@cs.wisc.edu}{jelena@cs.wisc.edu}).}
\and Cheuk Yin Lin\footrecall{wisc}
\and Jelena Diakonikolas\footrecall{wisc}
}
\date{}

\begin{document}

\maketitle

\begin{abstract}
Stochastic gradient descent (SGD) is perhaps the most prevalent optimization method  in modern machine learning. Contrary to the empirical practice of sampling from the datasets \emph{without replacement} and with (possible) reshuffling at each epoch, the theoretical counterpart of SGD usually relies on the assumption of \emph{sampling with replacement}. It is only very recently that SGD with sampling without replacement -- shuffled SGD -- has been analyzed. For convex finite sum problems with $n$ components and under the $L$-smoothness assumption for each  component function, there are matching upper and lower bounds, under sufficiently small -- $\gO(\frac{1}{nL})$ -- step sizes. Yet those bounds appear too pessimistic -- in fact, the predicted performance is generally no better than for full gradient descent -- and do not agree with the empirical observations. 
In this work, to narrow the gap between the theory and practice of shuffled SGD, we sharpen the focus from general finite sum problems to empirical risk minimization with linear predictors. This allows us to take a primal-dual perspective and interpret shuffled SGD as a primal-dual method with cyclic coordinate updates on the dual side. Leveraging this perspective, we prove fine-grained complexity bounds that depend on the data matrix and are never worse than what is predicted by the existing bounds. Notably, our bounds  predict much faster convergence than the existing analyses -- by a factor of the order of $\sqrt{n}$ in some cases. We empirically demonstrate that on common  machine learning datasets our bounds are indeed much tighter. We further  extend our analysis to  nonsmooth convex problems {and more general finite-sum problems}, with similar improvements.
\end{abstract}

\section{Introduction}\label{sec:intro}
Originally proposed in~\citet{robbins1951stochastic}, SGD has been broadly studied in machine learning due to its effectiveness in large-scale settings, where full gradient computations are often computationally prohibitive. 
When applied to unconstrained finite-sum problems 
\begin{align}\label{eq:general-prob}
   \min_{\vx \in \R^d} f(\vx),\; \text{ where }\; f(\vx) := \frac{1}{n}\sum_{i = 1}^n f_i(\vx),
\end{align}
SGD performs the update $\vx_{t} = \vx_{t - 1} - \eta\nabla f_{i_t}(\vx_{t - 1})$ in each iteration~$t$. Traditional theoretical analysis for SGD builds upon the assumption of sampling $i_t \in [n]$ with replacement which leads to $\E_{i_t}[\nabla f_{i_t}(\vx_{t - 1})] = \nabla f(\vx_{t - 1})$ and thus much of the (deterministic) gradient descent-style analysis can be transferred to this setting. By contrast, no such connection between the component and the full gradient can be established for shuffled SGD --- which employs sampling \emph{without replacement} --- making its analysis much more challenging. As a result, despite its fundamental nature, there were no non-asymptotic convergence results for shuffled SGD until a very recent line of work~\citep{gurbuzbalaban2021random, shamir2016without, haochen2019random, nagaraj2019sgd, rajput2020closing, ahn2020sgd, mishchenko2020random, nguyen2021unified, cha2023tighter}. All these results consider general finite sum problems, with the same regularity condition constant (such as the Lipschitz constant of each $f_i$ or their gradients) being assumed to be the same for all the component functions. As a result, the obtained convergence bounds are typically no better than for (full) gradient descent, and are only better than the bounds for SGD with replacement sampling if the algorithm is run for many full passes over the data~\citep{mishchenko2020random, nguyen2021unified}.  

To study the effect of the structure of the data on the convergence of shuffled SGD, we sharpen the focus from general finite-sum problems to convex empirical risk minimization (ERM) with linear predictors: 
\begin{equation}\label{eq:prob}
    \min_{\vx \in \R^d}\Big\{f(\vx) := \frac{1}{n}\sum_{i = 1}^n\ell_i(\va_i^{\top}\vx)\Big\}, \tag{P}
\end{equation}
where $\va_i \in \R^d$ ($i \in [n]$) are data vectors and $\ell_i: \R \rightarrow \R$ are convex and either smooth or Lipschitz nonsmooth  functions associated with the linear predictors $\innp{\va_i, \vx}$ for $i \in [n]$.
In addition to their explicit dependence on the data, it is worth noting that problems of the form \eqref{eq:prob} cover most of the  standard convex ERM problems where shuffled SGD is commonly applied, such as   
support vector machines, least absolute deviation, least squares, and logistic regression. 

{Problem \eqref{eq:prob} admits an explicit primal-dual formulation using the standard Fenchel conjugacy argument (see, e.g.,~\citet{chambolle2011first,chambolle2018stochastic}),   
\begin{equation}\label{eq:PD-formulation}
\begin{aligned}
   & \min_{\vx \in \sR^d}\max_{\vy \in \sR^n} \gL(\vx, \vy),\\
    \gL(\vx, \vy) :=\; & \frac{1}{n}\innp{\mA\vx, \vy} - \frac{1}{n}\sum_{i = 1}^n\ell_i^*(\vy^{i}) \\
    = \;& \frac{1}{n}\sum_{i=1}^n \big(\va_i^{\top}\vx\vy^{i} - \ell_i^*(\vy^{i})\big), 
\end{aligned}
\tag{PD}
\end{equation}
where $\mA = [\va_1, \va_2, \dots, \va_n]^{\top} \in \R^{n \times d}$ is the data matrix and $\ell_i^*:\R\to\R$ is the convex conjugate of $\ell_i$. This observation allows us to interpret without-replacement SGD updates as cyclic coordinate updates on the dual side.

As already mentioned, our motivation for considering problems of this form, in addition to their prevalence in machine learning applications, is to transparently show how the structure of the data and the possibly different Lipschitz constants of the component functions or their gradients affect the complexity of the problem.
However, our techniques extend to the more general finite-sum problem~\eqref{eq:general-prob} considered in prior work, where the dependence on the component function smoothness constants is improved from the \emph{maximum} to the much tighter \emph{average} (see Appendix \ref{appx:general} for details). 
}

\paragraph{Background and Related Work.} 
SGD (with replacement) has been extensively studied in many settings (see e.g.,~\citet{robbins1951stochastic, bottou2018optimization, bubeck2015convex, agarwal2009information} for convex optimization). Compared to SGD, shuffled SGD usually exhibits faster convergence in practice~\citep{bottou2009curiously, recht2013parallel}, and is easier and more efficient to implement~\citep{bengio2012practical}. 
For each epoch $k$, shuffled SGD-style algorithms perform incremental gradient updates based on the sample ordering (permutation of the data points) denoted by $\pi^{(k)}$. There are three main choices of data permutations: (i) $\pi^{(k)} \equiv \pi$ for some fixed permutation of $[n]$ for all epochs, where shuffled SGD 
reduces to the incremental gradient (IG) method; (ii) $\pi^{(k)} \equiv \Tilde{\pi}$ where $\Tilde{\pi}$ is randomly chosen only once, at the beginning of the first epoch, referred to as the shuffle-once (SO) scheme; 
(iii) $\pi^{(k)}$ randomly generated at the beginning of each epoch, referred to as random reshuffling (RR). 

In terms of the theoretical analysis, sampling without replacement introduces the sampling bias at each iteration, making it difficult to approximate shuffled SGD by full gradient descent. Using empirical observations, shuffled SGD was conjectured to converge much faster than SGD with replacement, based on the \emph{noncommutative arithmetic-geometric mean
inequality} conjecture~\citep{recht2012toward}, which was later proved to be false~\citep{lai2020recht,de2020random}. As a consequence, whether or not shuffled SGD can be faster than SGD at least in some regimes remained open \citep{bottou2009curiously} until a breakthrough result in~\citet{gurbuzbalaban2021random}, where it was shown that for the class of smooth strongly convex optimization problems, the convergence of the RR variant of shuffled SGD is essentially of the order-$(1/K^2)$ for $K$ full passes of the data (also called epochs), which is faster than order-$(1/nK)$ convergence of SGD for sufficiently large $K$. This bound for the smooth strongly convex case was later improved under various regimes and additional assumptions~\citep{shamir2016without,haochen2019random,nagaraj2019sgd,mishchenko2020random, nguyen2021unified,ahn2020sgd}, while the tightest of those bounds were matched by lower bounds in \citet{rajput2020closing, yun2022minibatch, cha2023tighter,safran2020good}. 

Since our results are for the general (non-strongly) convex regimes, in the rest of this section we focus on the results that apply to those (convex, smooth or nonsmooth Lipschitz) regimes. For convex nonsmooth Lipschitz problems, we are only aware of the results in \citet{shamir2016without}. These results are only useful when the number of data passes $K$ is small and the number of component functions $n$ is large, as they contain an irreducible order-$\frac{1}{\sqrt{n}}$ error, and are not directly comparable to our results. 

For general smooth convex settings, the convergence of shuffled SGD has been established only recently. For $K$ sufficiently large, \citet{nagaraj2019sgd} proved a convergence rate $\gO(\frac{1}{\sqrt{nK}})$ for RR, which leads to the complexity matching  SGD. This result was later improved to $\gO(\frac{1}{n^{1/3}K^{2/3}})$ by \citet{mishchenko2020random, nguyen2021unified, cha2023tighter} for $K$ sufficiently large and with bounded variance assumed at the minimizer, while the same rate holds for SO~\citep{mishchenko2020random}. These results were complemented by matching lower bounds in \citet{cha2023tighter}, under sufficiently small step sizes as utilized in prior work.  
The results in~\citet{mishchenko2020random, nguyen2021unified} require restricted $\gO(\frac{1}{nL})$ step sizes and reduce to $\gO(\frac{1}{K})$ for small $K$, acquiring the same iteration complexity as full-gradient methods. Unlike in strongly convex settings, we are not aware of any follow-up work with improvements under small $K$ for smooth convex settings.

For IG variant of SGD without replacement (deterministic order), asymptotic convergence was established in~\citet{mangasarian1993serial, bertsekas2000gradient}, with further convergence results for both smooth and nonsmooth settings provided in~\citet{nedic2001incremental, li2019incremental, gurbuzbalaban2019convergence, ying2018stochastic, nguyen2021unified, mishchenko2020random}. As IG does not benefit from randomization, it is known to have a worse convergence bound  than RR under the Lipschitz Hessian assumption~\citep{gurbuzbalaban2019convergence, haochen2019random}, which was also shown in more general settings~\citep{mishchenko2020random}.

The major difficulty in analyzing shuffled SGD comes from characterizing the difference between the intermediate iterate and the iterate after one full data pass, for which current analysis (see e.g.,~\citet{mishchenko2020random} in smooth convex settings) uses the global smoothness constant with a triangle inequality. Such a bound may be too pessimistic and fail capturing the nuances of intermediate progress of shuffled SGD, which leads to a small step size and large $K$ restrictions. To provide a more fine-grained analysis that narrows the theory-practice gap for shuffled SGD, we notice that such proof difficulty is reminiscent of the analysis of cyclic block coordinate methods relating the partial gradients to the full one. This natural connection was further emphasized by studies of a variant of cyclic methods with random permutations~\citep{lee2019random, wright2020analyzing}, but their result was limited to convex quadratics. Since we consider ERM problems with linear predictors, the primal-dual reformulation~\eqref{eq:PD-formulation} allows for the primal-dual interpretation of shuffled SGD, where cyclic updates are performed on the dual side. 

In contrast to randomized methods (corresponding to standard SGD), cyclic methods are usually more challenging to analyze~\citep{nesterov2012efficiency}, basic variants exhibit much worse \emph{worst-case} complexity than even full gradient methods~\citep{li2017faster,sun2021worst, beck2013convergence, gurbuzbalaban2017cyclic, li2017faster, saha2013nonasymptotic, xu2015block,xu2017globally}, with more refined results being established only recently~\citep{song2021fast,cai2022cyclic,lin2023accelerated}. While the inspiration for our work came from these recent results~\citep{song2021fast,cai2022cyclic,lin2023accelerated}, they are completely technically disjoint. First, all these results rely on non-standard block Lipschitz assumptions, which are not present in our work. Second, all of them leverage proximal gradient-style cyclic updates to carry out the analysis, which is inapplicable in our case for the cyclic updates on the dual side, as otherwise the method would not correspond to (shuffled) SGD. Finally, \citet{song2021fast,lin2023accelerated} utilize extrapolation steps, which would 
break the connection to shuffled SGD in our setting, while \citet{cai2022cyclic} relies on a gradient descent-type descent lemma, which is 
impossible to establish in our setting. 

\paragraph{Contributions.} We provide a fine-grained analysis of  shuffled SGD through a primal-dual perspective, by studying ERM problems with linear predictors. For completeness, we further extend this perspective and the corresponding analysis to general smooth convex finite-sum problems in Appendix~\ref{appx:general}. We provide improved convergence bounds for all three popular data permutation strategies: RR, SO, and IG, in smooth convex and convex nonsmooth Lipschitz settings.  
We summarize our results and compare them to the state of the art in Table~\ref{table:complexity}. As is standard, all complexity results in Table~\ref{table:complexity} are expressed in terms of individual (component) gradient evaluations. They represent the number of gradient evaluations required to construct a solution with (expected) optimality gap $\epsilon,$ given a target error $\epsilon > 0.$

\begin{table*}[t!]
\caption{Comparison of our results with state of the art, in terms of individual gradient oracle complexity required to output $\vx_{\text{output}}$ with  $\E[f(\vx_{\text{output}}) - f(\vx_*)] \leq \epsilon$, where $\epsilon > 0$ is the target error and $\vx_*$ is the optimal solution. Here, $\sigma_*^2 = \frac{1}{n}\sum_{i = 1}^n\|\nabla f_i(\vx_*)\|_2^2$, $D = \|\vx_0 - \vx_*\|_2$. 
Parameters $\Hat{L},$ $\Tilde{L}$ and $\Bar{G}$ are defined in Section~\ref{sec:PD-shuffled-SGD} and discussed in the text of this section. {Parameters $\Hat{L}_g, \Tilde{L}_g$ are defined in Appendix~\ref{appx:general} and satisfy $\Hat{L}^g \leq \frac{1}{n}\sum_{i = 1}^n L_i$ and $\Tilde{L}^g \leq L$.} 
Quantity $\|\vy_*\|_{\mLambda^{-1}}$ is related to $\sigma_*$ and formally defined in Section~\ref{sec:prelim}.} 
\label{table:complexity}
\vskip 0.15in
\begin{center}
\begin{scriptsize}
\begin{sc}
\begin{tabular}{l c c c}
    \toprule
    PAPER & COMPLEXITY & ASSUMPTIONS & STEP SIZE \\
    \midrule
    \makecell[l]{\citet{nguyen2021unified} \\
    \citet{cha2023tighter}} \hfill (RR) & 
    $\gO\big(\frac{nLD^2}{\epsilon} + \frac{\sqrt{nL}\sigma_*D^2}{\epsilon^{3/2}}\big)$ & 
    $f_i$: $L$-smooth, convex, $b = 1$ & 
    $\gO\big(\frac{1}{nL}\big)$ \\
    \addlinespace
    \cline{2-4}
    \addlinespace
    \citet{mishchenko2020random} \hfill (RR/SO) & 
    $\gO\big(\frac{nLD^2}{\epsilon} + \frac{\sqrt{nL}\sigma_*D^2}{\epsilon^{3/2}}\big)$ & 
    $f_i$: $L$-smooth, convex, $b = 1$ & 
    $\gO\big(\frac{1}{nL}\big)$ \\
    \addlinespace
    \cline{2-4}
    \addlinespace
    {\bf [Ours, Theorem~\ref{thm:convergence}]} \hfill (RR/SO) & 
    $\gO\big(\frac{n\sqrt{\Hat{L}\Tilde{L}}D^2}{\epsilon} + \sqrt{\frac{(n - b)(n + b)}{n(n - 1)}}\frac{\sqrt{n\Tilde{L}}\sigma_*D^2}{\epsilon^{3/2}}\big)$ & 
    \makecell{$\ell_i$: $L_i$-smooth, convex \\ generalized linear model} & 
    $\gO\big(\frac{b}{n\sqrt{\Hat{L}\Tilde{L}}}\big)$ \\
    \addlinespace
    \cline{2-4}
    \addlinespace
    {\bf [Ours, Theorem~\ref{thm:convergence-general}]} \hfill (RR/SO) & 
    $\gO\big(\frac{n\sqrt{\Hat{L}^g\Tilde{L}^g}D^2}{\epsilon} + \sqrt{\frac{(n - b)(n + b)}{n(n - 1)}}\frac{\sqrt{n\Tilde{L}^g}\sigma_*D^2}{\epsilon^{3/2}}\big)$ & 
    \makecell{$f_i$: $L_i$-smooth, convex} 
    & 
    $\gO\big(\frac{b}{n\sqrt{\Hat{L}^g\Tilde{L}^g}}\big)$ \\
    \addlinespace
    \cline{2-4}
    \addlinespace
    \makecell[l]{\citet{cha2023tighter} \\ Lower bound} \hfill (RR) & $\Omega\big(\frac{\sqrt{nL}\sigma_*D^2}{\epsilon^{3/2}}\big)$ & 
    \makecell{$f_i$: $L$-smooth, convex, $b = 1$ \\ Large $K$} &
    $\gO\big(\frac{1}{nL}\big)$ \\
    \addlinespace
    \cline{1-4}
    \addlinespace
    \citet{mishchenko2020random} \hfill (IG) & 
    $\gO\big(\frac{nLD^2}{\epsilon} + \frac{n\sqrt{L}\sigma_*D^2}{\epsilon^{3/2}}\big)$ & 
    $f_i$: $L$-smooth, convex, $b = 1$ & 
    $\gO\big(\frac{1}{nL}\big)$ \\
    \addlinespace
    \cline{2-4}
    \addlinespace
    {\bf [Ours, Theorem~\ref{thm:convergence-IGD}]} \hfill (IG) & 
    $\begin{array}{c}
         \gO\big(\frac{n\sqrt{\Hat{L}\Tilde{L}}D^2}{\epsilon} \\
         + \frac{\min\{\sqrt{n\Hat{L}\Tilde{L}}\|\vy_{*}\|_{\mLambda^{-1}}, (n - b)\sqrt{\Tilde{L}}\sigma_*\}D^2}{\epsilon^{3/2}}\big)
    \end{array}$ & 
    \makecell{$\ell_i$: $L_i$-smooth, convex \\ generalized linear model} & 
    $\gO\big(\frac{b}{n\sqrt{\Hat{L}\Tilde{L}}}\big)$ \\
    \addlinespace
    \cline{2-4}
    \addlinespace
    {\bf [Ours, Theorem~\ref{thm:convergence-IGD-general}]} \hfill (IG) & 
    $\begin{array}{c}
         \gO\big(\frac{n\sqrt{\Hat{L}^g\Tilde{L}^g}D^2}{\epsilon} \\
         + \frac{\min\{\sqrt{n\Hat{L}^g\Tilde{L}^g}\|\vy_{*}\|_{\mLambda^{-1}}, (n - b)\sqrt{\Tilde{L}^g}\sigma_*\}D^2}{\epsilon^{3/2}}\big)
    \end{array}$ & 
    \makecell{$f_i$: $L_i$-smooth, convex}
    & 
    $\gO\big(\frac{b}{n\sqrt{\Hat{L}^g\Tilde{L}^g}}\big)$ \\
    \addlinespace
    \midrule
    \midrule
    \addlinespace
    \citet{shamir2016without} \hfill (RR/SO) & 
    $\gO\big(\frac{\Bar{B}^2G^2}{\epsilon^2})$ ($K = 1$, $n = \Omega(1/\epsilon^2)$\big) & 
    \makecell{$\ell_i$: $G$-Lipschitz, convex, $b = 1$ \\ $\Bar{B}$-Bounded Iterate, $\|\va_i\|_2 \leq 1$\\ generalized linear model} & 
    $\gO\big(\frac{1}{\sqrt{n}}\big)$ \\
    \addlinespace
    \cline{2-4}
    \addlinespace
    {\bf [Ours, Theorem~\ref{thm:convergence-nonsmooth}]} \hfill (RR/SO/IG) & 
    $\gO\big(\frac{n\Bar{G}D^2}{\epsilon^2}\big)$ & 
    \makecell{$\ell_i$: $G_i$-Lipschitz, convex \\ generalized linear model} & 
    $\gO\big(\frac{b}{n\sqrt{\Bar{G}K}}\big)$ \\
    \addlinespace
    \bottomrule
\end{tabular}
\end{sc}
\end{scriptsize}
\end{center}
\vskip -0.1in
\end{table*}

To succinctly explain where our improvements come from, we now consider 
{\eqref{eq:prob}} where $\ell_i$ is $1$-smooth and $b = 1$, ignoring the gains from the mini-batch estimators (for large $K$) and our softer guarantee that handles individual smoothness constants. For this specific case, $\Tilde{L} = L = \max_{1\leq i\leq n}\|\va_i\|^2,$ and thus our results for the smooth case and the RR and SO variants match state of the art in the second term, which dominates when there are many ($K = \Omega(\frac{L^2D^2n}{\sigma_*^2})$)
epochs. When there are $K = O(\frac{L^2D^2n}{\sigma_*^2})$ epochs in the SO and RR variants or for all regimes of $K$ in the IG variant, the difference between our and state of the art bounds comes from the constant $\Hat{L}$ that replaces $L$, and our improvement is by a factor $\sqrt{L/\Hat{L}}$. Note that $\gO(\frac{nL}{\epsilon})$ from prior bounds, which is the dominating term in the small $K$ regime, is even worse than the complexity of full gradient descent, as the full gradient Lipschitz constant of $f$ is $\frac{1}{n}\|\mA\mA^\top\|_2 \leq L$.

Denoting by $\mI_{j\uparrow}$ the identity matrix with its first $j$ diagonal elements set to zero, given a worst-case permutation $\bar{\pi}$, and denoting by $\mA_{\bar{\pi}}$ the data matrix $\mA$ with its rows permuted according to $\Bar{\pi},$ our constant $\Hat{L}$ can be bounded above by $L$ using the following sequence of inequalities:
\begin{equation}\label{eq:seq-of-relaxations}
\begin{aligned}
\Hat{L} &= \textstyle\frac{1}{n^2}\|\sum_{j=1}^n\mI_{(j - 1)\uparrow} \mA_{\Bar{\pi}}\mA_{\Bar{\pi}}^{\top}\mI_{(j - 1)\uparrow}\|_2\\
&\stackrel{\tiny(i)}{\leq} \textstyle\frac{1}{n^2}\textstyle\sum_{j=1}^n\|\mI_{(j - 1)\uparrow} \mA_{\Bar{\pi}}\mA_{\Bar{\pi}}^{\top}\mI_{(j - 1)\uparrow}\|_2\\
&\stackrel{\tiny(ii)}{\leq} \textstyle\frac{1}{n^2}\textstyle\sum_{j=1}^n\| \mA_{\Bar{\pi}}\mA_{\Bar{\pi}}^{\top}\|_2\\
&\stackrel{\tiny(iii)}{\leq} \textstyle\frac{1}{n}\sum_{i=1}^n \|\va_i\|^2 \leq \max_{1\leq i \leq n}\|\va_i\|^2 = L, 
\end{aligned}
\end{equation}
where $(i)$ holds by the triangle inequality, $(ii)$ holds because the operator norm of the matrix $\mI_{(j - 1)\uparrow} \mA_\pi\mA_\pi^{\top}\mI_{(j - 1)\uparrow}$ (equal to the operator norm of the bottom right $(n- j + 1)\times (n - j + 1)$ submatrix of $\mA_\pi\mA_\pi^{\top}$) is always at most $\|\mA_\pi\mA_\pi^{\top}\| = \|\mA\mA^{\top}\|$, for any permutation $\pi$, and $(iii)$ holds by bounding above the operator norm of a symmetric matrix by its trace. Hence $\Hat{L}$ is never larger than $L$, but can generally be much smaller, due to the sequence of inequality relaxations in \eqref{eq:seq-of-relaxations}. While each of these inequalities can be loose, we emphasize that $(iii)$ is almost always loose, by a factor that can be as large as $n$. 
Even more, we empirically evaluate ${L/\Hat{L}}$ %% JD: DO NOT CHANGE THIS
on 15 large-scale machine learning datasets and demonstrate that on those datasets it is of the order $n^{\alpha},$ for $\alpha \in [0.15, 0.96]$ (see Sec.~\ref{sec:exp} for more details), providing strong evidence of a tighter guarantee as a function of $n$. 

For the nonsmooth settings, by a similar sequence of inequalities, we can show that $\Bar{G} \leq G^2,$ which can be loose by a factor $1/n$ due to the operator norm to trace inequality.
Thus, our bound is never worse than what would be obtained from the full subgradient method, but can match
the bound of standard SGD, or can even improve\footnote{This is because it is possible for inequalities $(i)$ and $(ii)$ to be loose, in addition to $(iii)$.} upon it for at least some data matrices $\mA.$

\section{Notation and Preliminaries}\label{sec:prelim}
We consider a real $d$-dimensional Euclidean space $(\R^d, \|\cdot\|)$, where $d$ is finite and the norm $\|\cdot\| = \sqrt{\innp{\cdot, \cdot}}$ is induced by the standard inner product associated with the space. By our primal-dual formulation~(\ref{eq:PD-formulation}), the corresponding dual space is $(\R^n, \|\cdot\|)$. For a vector $\vx$, we let $\vx^j$ denote its $j$-th coordinate. For any positive integer $m$, we use $[m]$ to denote the set $\{1, 2, \dots, m\}$. We assume that we are given a positive integer $m \leq n$ and a partition of $[n]$ into sets $\gS^1, \gS^2, \dots, \gS^m$. For notational convenience, we assume that the partition is ordered, in the sense that for $1 \leq j < j' \leq m$, $\max_{i \in \gS^j} i < \min_{i' \in \gS^{j'}}i'$.\footnote{This is without loss of generality, as it can be achieved by reordering the rows in the data matrix.} We denote by $\vy^{(j)}$ the subvector of $\vy \in \R^n$ indexed by the elements of  $\gS^j$, and by $\mA^{(j)}$ the submatrix obtained from $\mA \in \R^{n \times d}$ by selecting the rows indexed by $\gS^j$. 

Given a matrix $\mA$,  $\|\mA\| := \sup_{\vx \in \R^d, \|\vx\| \leq 1}\|\mA\vx\|$ denotes its operator norm. For a positive definite matrix $\mLambda$, $\|\cdot\|_{\mLambda}$ denotes the Mahalanobis norm, $\|\vx\|_{\mLambda} := \sqrt{\innp{\mLambda \vx, \vx}}$. We use $\mI$ to denote the identity matrix of size $n \times n$ throughout the paper. For any $j \in [n]$, we define $\mI_{j\uparrow}$ as the matrix obtained from the identity matrix $\mI$ by setting the first $j$ diagonal elements to zero, and we let $\mI_j$ be the matrix with only $j$-th diagonal element nonzero and equal to $1$. Furthermore, for the $j$-th block we define $\mI_{(j)} := \sum_{i = b(j - 1) + 1}^{bj}\mI_i$ where only the elements of $\mI$ corresponding to the $j$-th block remains non-zero.

For each loss $\ell_i(\cdot)$, we denote its convex conjugate by $\ell_i^*(\vy^i): \R \rightarrow \R$ and thus  $\ell_i(\va_i^{\top}\vx) = \sup_{\vy^i \in \R}\{\vy^i\va_i^{\top}\vx - \ell_i^*(\vy^i)\}$. We let $\vy_{\vx} \in \R^n$ be the conjugate pair of $\vx \in \R^d$, i.e., $\vy_\vx^i = \argmax_{\vy^i \in \R}\{\vy^i\va_i^{\top}\vx - \ell_i^*(\vy^i)\} \in \partial \ell_i(\va_i^{\top}\vx)$. 

We make the following assumptions. The first one is the minimal  one made throughout the paper.
\begin{assumption}\label{assp:convex}
Each individual loss $\ell_i$ is convex, and there exists a minimizer $\vx_* \in \R^d$ for $f(\vx)$.
\end{assumption}
By Assumption~\ref{assp:convex}, $f$ is convex, and we denote $\vy_* = \vy_{\vx_*}$. 
In nonsmooth settings, we make an additional standard assumption.
\begin{assumption}\label{assp:Lipschitz}
Each $\ell_i$ is $G_i$-Lipschitz $(i \in [n])$, i.e., $|\ell_i(x) - \ell_i(y)| \leq G_i|x - y|$ for any $x, y \in \R$; thus $|g_i(x)| \leq G_i$ where $g_i(x) \in \partial \ell_i(x)$. 
\end{assumption}
If Assumption~\ref{assp:Lipschitz} holds, each $\ell_i(\va_i^\top\vx)$ is also $G$-Lipschitz w.r.t.\ $\vx$, where $G = \max_{i \in [n]}G_i\|\va_i\|_2$. We let $\mGamma = \text{diag}(G_1^2, G_2^2, \dots, G_n^2)$ be the diagonal matrix composed of the squared component Lipschitz constants $G_i^2$. 
For the smooth settings, we make the following assumption.
\begin{assumption}\label{assp:smooth}
Each $\ell_i$ is $L_i$-smooth $(i \in [n])$, i.e., $|\ell_i'(x) - \ell_i'(y)| \leq L_i|x - y|$ for any $x, y \in \R$. 
\end{assumption}
We remark that Assumption~\ref{assp:smooth} implies that both $f$ and each component function $f_i(\vx) = \ell_i(\va_i^\top\vx)$ are $L$-smooth, %$f$ and each $\ell_i(\va_i^\top\vx)$ are all $L$-smooth, 
where $L = \max_{i \in [n]}L_i\|\va_i\|^2_2$. Assumption~\ref{assp:smooth} also implies that each convex conjugate $\ell_i^*$ is $\frac{1}{L_i}$-strongly convex~\citep{beck2017first}. In the following, we let $\mLambda = \text{diag}(L_1, L_2, \dots, L_n)$. 
When $\ell_i$ are smooth, we assume bounded variance at $\vx_*$, same as prior work~\citep{mishchenko2020random, nguyen2021unified,tran2021smg, tran2022nesterov}.

\begin{assumption}\label{assp:bounded-vr}
{$\sigma_*^2 := \frac{1}{n}\sum_{i = 1}^n\|\nabla f_i(\vx_*)\|^2 = \frac{1}{n}\sum_{i = 1}^n(\ell_i'(\va_i^\top\vx_*))^2\|\va_i\|_2^2$} is bounded. 
\end{assumption}

Given a primal-dual pair $(\vx, \vy) \in \R^d \times \R^n$, the primal-dual gap of \eqref{eq:PD-formulation} is $\text{Gap}(\vx, \vy) = \max_{(\vu, \vv) \in \R^d \times \R^n} \{\gL(\vx, \vv) - \gL(\vu, \vy)\}$. In particular, we consider the pair $(\vx, \vy_*)$ for  $\vx \in \R^d$, and bound $\text{Gap}^\vv(\vx, \vy_*) = \gL(\vx, \vv) - \gL(\vx_*, \vy_*)$ for an arbitrary but fixed $\vv$. To finally obtain the function value gap $f(\vx) - f(\vx_*)$ for (\ref{eq:prob}), we only need to choose $\vv = \argmax_{\vw \in \R^d}\gL(\vx, \vw) = \vy_{\vx}$.

To handle the cases with random data permutations, we use the following definitions corresponding to the data permutation $\pi = \{\pi_1, \pi_2, \dots, \pi_n\}$ of $[n]$: $\mA_\pi := \big[\va_{\pi_1}, \va_{\pi_2}, \dots, \va_{\pi_n}\big]^{\top}$, $\vv_\pi = \big(\vv^{\pi_1}, \vv^{\pi_2}, \dots, \vv^{\pi_n}\big)^\top$, $\vy_{*, \pi} = \big(\vy^{\pi_1}_*, \vy^{\pi_2}_*, \dots, \vy^{\pi_n}_*\big)^\top$, $\mGamma_\pi = \text{diag}\big(G_{\pi_1}^2, G_{\pi_2}^2, \dots, G_{\pi_n}^2\big)$ and $\mLambda_\pi = \text{diag}\big(L_{\pi_1}, L_{\pi_2}, \dots, L_{\pi_n}\big)$. For the permutation $\pi^{(k)}$ at the $k$-th epoch, we denote $\mA_k = \mA_{\pi^{(k)}}$, $\vv_{k} = \vv_{\pi^{(k)}}$, $\vy_{*, k} = \vy_{*, \pi^{(k)}}$, $\mGamma_k = \mGamma_{\pi^{(k)}}$ and $\mLambda_k = \mLambda_{\pi^{(k)}}$, for brevity.

\section{Primal-Dual View of Shuffled SGD and Improved Bounds}\label{sec:PD-shuffled-SGD}

\begin{algorithm}[t!]
\caption{Shuffled SGD (Primal-Dual View)}\label{alg:PD-shuffled-sgd}
\begin{algorithmic}[1]
    \STATE \textbf{Input:} Initial point $\vx_0 \in \R^d,$ batch size $b > 0,$ step size $\{\eta_k\} > 0,$ number of epochs $K > 0$
    \FOR {$k = 1$ to $K$}
        \STATE Generate any permutation $\pi^{(k)}$ of $[n]$ (either deterministic or random)
        \STATE $\vx_{k - 1, 1} = \vx_{k - 1}$
        \FOR {$i = 1$ to $m$}
            \STATE $\vy_k^{(i)} = \argmax_{\vy \in \R^b} \big\{\vy^{\top}\mA_k^{(i)}\vx_{k - 1, i} - \sum_{j = 1}^b\ell_{\pi_{b(i - 1) + j}^{(k)}}^*(\vy^j)\big\} \hfill \label{algo-line:dual-update}$ 
            \STATE $\vx_{k - 1, i + 1} =  \argmax_{\vx \in \R^d}\big\{\vy_k^{(i)\top}\mA_k^{(i)}\vx + \frac{b}{2\eta_k}\|\vx - \vx_{k - 1, i}\|^2\big\} \hfill \label{algo-line:primal-update}$
        \ENDFOR
        \STATE $\vx_{k} = \vx_{k - 1, m + 1}$, $\vy_k = \big(\vy_k^{(1)}, \vy_k^{(2)}, \dots, \vy_k^{(m)}\big)^{\top}$\;
    \ENDFOR
    \STATE \textbf{Return:} $\Hat{\vx}_K = \sum_{k=1}^K \eta_k \vx_{k} / \sum_{k = 1}^K \eta_k$
\end{algorithmic}
\end{algorithm}

We consider shuffled SGD with mini-batch estimators of size $b$ in convex settings. Without loss of generality, we assume $n = bm$ for some positive integer $m$, so shuffled SGD performs $m$ incremental gradient updates within each epoch (data pass).
Based on the formulation (\ref{eq:PD-formulation}), we view shuffled SGD as a primal-dual method with block coordinate updates on the dual side, as summarized in Algorithm~\ref{alg:PD-shuffled-sgd}, for completeness. To see the equivalence, in $i$-th inner iteration of $k$-th epoch, we first update the $i$-th block $\vy_k^{(i)} \in \R^b$ of the dual vector $\vy_{k - 1} \in \R^n$ based on $\vx_{k - 1, i}$ as in Line 6.
% ~\ref{algo-line:dual-update}. 
Since the dual update has a decomposable structure, this maximization step corresponds to computing the (sub)gradients $\{\ell_{\pi^{(k)}_j}'(\va_{\pi^{(k)}_j}^{\top}\vx_{k - 1, i})\}_{j = b(i - 1) + 1}^{bi}$ at $\vx_{k - 1, i}$ for the batch of individual losses indexed by $\{\pi^{(k)}_j\}_{j = b(i - 1) + 1}^{bi}$. Then in Line 7, 
% ~\ref{algo-line:primal-update}, 
we perform a minimization step using $\vy_k^{(i)}$ to compute $\vx_{k - 1, i + 1}$ on the primal side. Combining these two steps, we have $\vx_{k - 1, i + 1} = \vx_{k - 1, i} - \frac{\eta_k}{b}\sum_{j = b(i - 1) + 1}^{bi}\ell_{\pi^{(k)}_j}'(\va_{\pi^{(k)}_j}^{\top}\vx_{k - 1, i})\va_{\pi^{(k)}_j}$, which is exactly the {\it original primal shuffled SGD updating scheme}. 

\subsection{Smooth Convex Minimization}\label{sec:smooth}
{We begin by considering the case of smooth convex optimization for~\eqref{eq:prob}, 
while the analysis for general finite-sum problems~\eqref{eq:general-prob} is deferred to Appendix~\ref{appx:general}. 
To analyze the convergence of Algorithm~\ref{alg:PD-shuffled-sgd}, we bound its primal-dual gap $\mathrm{Gap}^\vv(\vx_{k}, \vy_*)$.
In particular, the dual update yields an upper bound on $\gL(\vx_{k}, \vv)$ by the strong convexity of $\ell_i^*$, while the primal update provides a lower bound on $\gL(\vx_*, \vy_*)$ via the minimization step. 
Combining these two, we can bound
$\text{Gap}^\vv(\vx_{k}, \vy_*)$ as summarized in Lemma~\ref{lemma:gap-bound}. 
The proof is deferred to Appendix~\ref{appx:smooth} for space considerations.}
\begin{restatable}{lemma}{gapBound}
\label{lemma:gap-bound}
Given $\{\vy_k^{(i)}\}_{i = 1}^m$ and $\{\vx_{k - 1, i}\}_{i = 1}^{m + 1}$ generated by Algorithm~\ref{alg:PD-shuffled-sgd} for  $k \in [K]$, let $\gE_k := \eta_k \mathrm{Gap}^\vv(\vx_k, \vy_*)
+ \frac{b}{2n}\|\vx_* - \vx_{k}\|_2^2 - \frac{b}{2n} \|\vx_* - \vx_{k - 1}\|_2^2$. If Assumptions~\ref{assp:convex}~and~\ref{assp:smooth} hold, then 
\begin{equation}\label{eq:gap-bound}
\begin{aligned}
    \gE_k \leq \;& \frac{\eta_k}{n}\sum_{i = 1}^m\vy_k^{(i) \top}\mA_k^{(i)}(\vx_{k} - \vx_{k - 1, i + 1}) \\
    & + \frac{\eta_k}{n}\sum_{i = 1}^m \big(\vv^{(i)}_k - \vy_k^{(i)}\big)^{\top}\mA_k^{(i)}(\vx_{k} - \vx_{k - 1, i}) \\
    & - \frac{\eta_k}{2n}\|\vy_k - \vv_k\|_{\mLambda_k^{-1}}^2 - \frac{\eta_k}{2n}\|\vy_k - \vy_{*, k}\|_{\mLambda_k^{-1}}^2 \\
    & - \frac{b}{2n}\sum_{i = 1}^m\|\vx_{k - 1, i} - \vx_{k - 1, i + 1}\|^2_2, 
\end{aligned}
\end{equation}
\end{restatable}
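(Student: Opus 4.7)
The plan is to bound the primal-dual gap $\mathrm{Gap}^{\vv}(\vx_k,\vy_*) = \gL(\vx_k,\vv) - \gL(\vx_*,\vy_*)$ by inserting an intermediate saddle point value $\gL(\vx_*,\vy_k)$:
$$\gL(\vx_k,\vv) - \gL(\vx_*,\vy_*) = \underbrace{\bigl[\gL(\vx_k,\vv) - \gL(\vx_*,\vy_k)\bigr]}_{\text{(I)}} + \underbrace{\bigl[\gL(\vx_*,\vy_k) - \gL(\vx_*,\vy_*)\bigr]}_{\text{(II)}}.$$
Term (II) is straightforward: since $\vy_*$ is by definition the maximizer of $\gL(\vx_*,\cdot)$ and each conjugate $\ell_i^*$ is $1/L_i$-strongly convex (Assumption~\ref{assp:smooth}), the map $-\gL(\vx_*,\cdot)$ is strongly convex in the $\frac{1}{n}\mLambda^{-1}$-norm with zero gradient at $\vy_*$, yielding $\text{(II)} \leq -\frac{1}{2n}\|\vy_k - \vy_*\|_{\mLambda^{-1}}^2 = -\frac{1}{2n}\|\vy_k - \vy_{*,k}\|_{\mLambda_k^{-1}}^2$, where the last equality uses permutation invariance.

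For term (I), I would decompose the sum over blocks induced by $\pi^{(k)}$ and invoke the dual update (Line~6) block-by-block. Because the dual objective is strongly concave coordinatewise with constants $1/L_{\pi_j^{(k)}}$, the optimality of $\vy_k^{(i)}$ gives, for every block $i$,
$$\vv_k^{(i)\top}\mA_k^{(i)}\vx_{k-1,i} - \sum_j\ell_{\pi_j}^*(\vv_k^{j}) \leq \vy_k^{(i)\top}\mA_k^{(i)}\vx_{k-1,i} - \sum_j\ell_{\pi_j}^*(\vy_k^{(i),j}) - \tfrac{1}{2}\|\vv_k^{(i)} - \vy_k^{(i)}\|_{\mLambda_k^{(i)-1}}^2.$$
Substituting this into (I) replaces the conjugate terms by a linear term and produces the $-\frac{\eta_k}{2n}\|\vy_k-\vv_k\|_{\mLambda_k^{-1}}^2$ contribution. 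A short algebraic rearrangement (add and subtract $\mA_k^{(i)}\vx_k$ and $\mA_k^{(i)}\vx_{k-1,i+1}$ inside the bilinear terms) then splits each block's contribution into exactly three pieces: (a)~$(\vv_k^{(i)} - \vy_k^{(i)})^\top\mA_k^{(i)}(\vx_k - \vx_{k-1,i})$, (b)~$\vy_k^{(i)\top}\mA_k^{(i)}(\vx_k - \vx_{k-1,i+1})$, and (c)~$\vy_k^{(i)\top}\mA_k^{(i)}(\vx_{k-1,i+1} - \vx_*)$; (a) and (b) already match the first two terms of \eqref{eq:gap-bound}.

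The main (and only slightly delicate) step is handling (c), where the primal update enters. The minimization in Line~7 yields the identity $\mA_k^{(i)\top}\vy_k^{(i)} = \frac{b}{\eta_k}(\vx_{k-1,i} - \vx_{k-1,i+1})$, so
$$\vy_k^{(i)\top}\mA_k^{(i)}(\vx_{k-1,i+1} - \vx_*) = \tfrac{b}{\eta_k}\innp{\vx_{k-1,i} - \vx_{k-1,i+1}, \vx_{k-1,i+1} - \vx_*}.$$
Applying the three-point identity $2\innp{\va-\vb,\vc-\vb} = \|\va-\vb\|^2 + \|\vc-\vb\|^2 - \|\va-\vc\|^2$ with $\va=\vx_{k-1,i}$, $\vb=\vx_{k-1,i+1}$, $\vc=\vx_*$ rewrites this as $-\tfrac{b}{2\eta_k}\|\vx_{k-1,i} - \vx_{k-1,i+1}\|^2 + \tfrac{b}{2\eta_k}\bigl[\|\vx_*-\vx_{k-1,i}\|^2 - \|\vx_*-\vx_{k-1,i+1}\|^2\bigr]$. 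Summing over $i=1,\dots,m$ makes the second bracket telescope to $\tfrac{b}{2\eta_k}\bigl[\|\vx_*-\vx_{k-1}\|^2 - \|\vx_*-\vx_k\|^2\bigr]$, which is precisely the contribution needed to absorb the $\frac{b}{2n}\|\vx_* - \vx_k\|^2 - \frac{b}{2n}\|\vx_* - \vx_{k-1}\|^2$ terms hidden inside $\gE_k$, while the first bracket supplies the negative quadratic $-\frac{b}{2n}\sum_i\|\vx_{k-1,i} - \vx_{k-1,i+1}\|^2$.

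Multiplying the aggregated bound for $\gL(\vx_k,\vv) - \gL(\vx_*,\vy_k)$ and the bound for term (II) by $\eta_k$, and then moving the telescoping distance terms to the left-hand side to form $\gE_k$, gives \eqref{eq:gap-bound}. The only step where one has to be careful is the bookkeeping in (c): recognizing that the same primal update that gave a gradient-descent iterate in the primal view now plays a dual role of (i)~producing the needed telescoping of squared distances to $\vx_*$ and (ii)~extracting the negative block-progress terms $-\tfrac{b}{2n}\|\vx_{k-1,i} - \vx_{k-1,i+1}\|^2$ that will later be used to cancel the bilinear residuals (a) and (b) in the subsequent convergence analysis.
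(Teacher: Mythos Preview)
Your proposal is correct and takes essentially the same approach as the paper. The only differences are organizational: the paper separately upper-bounds $\gL(\vx_k,\vv)$ and lower-bounds $\gL(\vx_*,\vy_*)$ (the latter via strong convexity of $\ell_i^*$ at $\vy_*$ and then strong convexity of the primal quadratic $\phi_k^{(i)}$), whereas you insert the intermediate value $\gL(\vx_*,\vy_k)$ and handle part~(c) via the explicit gradient-step identity plus the three-point identity; these are algebraically the same computations.
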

The first term $\gT_{1} := \frac{\eta_k}{n}\sum_{i = 1}^m\vy_k^{(i) \top}\mA_k^{(i)}(\vx_{k} - \vx_{k - 1, i + 1})$ in~\eqref{eq:gap-bound} may seem nontrivial to handle at a first glance, but can actually be aggregated into a retraction term capturing the primal progress within one epoch. We state the bound on $\gT_1$ in Lemma~\ref{lemma:first-inner}, with the proof provided in Appendix~\ref{appx:smooth}.
\begin{restatable}{lemma}{firstInner}
\label{lemma:first-inner}
For any $k \in [K]$, the iterates $\{\vy_k^{(i)}\}_{i = 1}^m$ and $\{\vx_{k - 1, i}\}_{i = 1}^{m + 1}$ in  Algorithm~\ref{alg:PD-shuffled-sgd} satisfy 
\begin{equation*}
    \gT_{1} = \frac{b}{2n}\sum_{i = 1}^m\|\vx_{k - 1, i} - \vx_{k - 1, i + 1}\|^2_2 - \frac{b}{2n}\|\vx_{k - 1} - \vx_{k}\|^2_2. 
\end{equation*}
\end{restatable}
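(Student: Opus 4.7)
\textbf{Proof plan for Lemma~\ref{lemma:first-inner}.} My approach would be to first extract a closed-form expression for $\mA_k^{(i)\top}\vy_k^{(i)}$ from the primal update in Line~7 of Algorithm~\ref{alg:PD-shuffled-sgd}, substitute it into $\gT_1$, and then use a standard squared-norm expansion of the total epoch displacement to produce the two terms on the right-hand side.

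First, I would take the optimality condition of the primal update
$$\vx_{k - 1, i + 1} = \mathop{\arg\min}_{\vx \in \R^d}\Bigl\{\vy_k^{(i)\top}\mA_k^{(i)}\vx + \tfrac{b}{2\eta_k}\|\vx - \vx_{k - 1, i}\|_2^2\Bigr\},$$
which gives $\mA_k^{(i)\top}\vy_k^{(i)} = \frac{b}{\eta_k}(\vx_{k-1,i} - \vx_{k-1,i+1})$. Plugging this into the definition of $\gT_1$ collapses the $\eta_k/n$ factor against $b/\eta_k$ and yields
$$\gT_1 = \frac{b}{n}\sum_{i=1}^m \bigl(\vx_{k-1,i} - \vx_{k-1,i+1}\bigr)^{\top}\bigl(\vx_{k} - \vx_{k-1,i+1}\bigr).$$

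Next, I would introduce the shorthand $\vd_i := \vx_{k-1,i+1} - \vx_{k-1,i}$, so that $\vx_k - \vx_{k-1,i+1} = \sum_{j=i+1}^{m}\vd_j$ and $\vx_k - \vx_{k-1} = \sum_{j=1}^{m}\vd_j$. The sum above then becomes $-\sum_{i=1}^m \vd_i^{\top}\sum_{j=i+1}^m \vd_j = -\sum_{1\leq i<j\leq m}\vd_i^{\top}\vd_j$. I would then apply the identity
$$\Bigl\|\sum_{i=1}^m \vd_i\Bigr\|_2^2 = \sum_{i=1}^m \|\vd_i\|_2^2 + 2\sum_{1\leq i < j\leq m}\vd_i^{\top}\vd_j$$
to rewrite $-\sum_{i<j}\vd_i^{\top}\vd_j = \tfrac{1}{2}\sum_{i=1}^m \|\vd_i\|_2^2 - \tfrac{1}{2}\|\vx_k - \vx_{k-1}\|_2^2$. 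Multiplying by $b/n$ produces exactly the claimed expression.

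This is essentially an algebraic identity once the optimality condition is applied; I do not expect any real obstacle. The only subtlety is recognizing the telescoping structure when expanding $\vx_k - \vx_{k-1,i+1}$ as the tail sum of step increments, which is what turns the apparently messy bilinear sum into the clean difference of squared norms. The result can also be derived by induction on $m$ using the three-point identity $2\langle a-b, c-b\rangle = \|a-b\|_2^2 + \|c-b\|_2^2 - \|a-c\|_2^2$, but the tail-sum expansion above is more transparent and avoids bookkeeping.
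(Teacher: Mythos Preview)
Your proposal is correct and follows essentially the same approach as the paper: both extract $\mA_k^{(i)\top}\vy_k^{(i)} = \frac{b}{\eta_k}(\vx_{k-1,i} - \vx_{k-1,i+1})$ from the primal optimality condition, expand $\vx_k - \vx_{k-1,i+1}$ as the tail sum of increments, and apply the squared-norm expansion $\|\sum_i \vd_i\|_2^2 = \sum_i \|\vd_i\|_2^2 + 2\sum_{i<j}\vd_i^\top\vd_j$ to obtain the claimed identity. The only difference is that you introduce the shorthand $\vd_i$, while the paper works directly with the displacement vectors.
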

The second term $\gT_{2} := \frac{\eta_k}{n}\sum_{i = 1}^m \big(\vv^{(i)}_k - \vy_k^{(i)}\big)^{\top}\mA_k^{(i)}(\vx_{k} - \vx_{k - 1, i})$ in~\eqref{eq:gap-bound} requires us to relate the intermediate iterate $\vx_{k - 1, i}$ to the iterate $\vx_{k}$ after one full data pass, which corresponds to a partial sum of the component gradients, each at different iterates $\{\vx_{k - 1, j}\}_{j = i}^{m}$. In contrast to prior analyses (e.g.,~\citet{mishchenko2020random}) using the global smoothness and triangle inequality to bound this partial sum, we provide a tighter bound on $\gT_2$ that tracks the progress of the cyclic update on the dual side, in the aggregate. 
To simplify the notation, we introduce the following definitions 
\begin{equation}\label{eq:new-smoothness-constants}
\begin{aligned}
    & \Hat{L}_\pi := \frac{1}{mn}\big\|\mLambda^{1/2}_\pi\big(\textstyle\sum_{j=1}^m\mI_{b(j - 1)\uparrow} \mA_\pi\mA_\pi^{\top}\mI_{b(j - 1)\uparrow}\big)\mLambda_\pi^{1/2}\big\|_2, \\
    & \Tilde{L}_\pi := \frac{1}{b}\big\|\mLambda_\pi^{1/2}\big(\textstyle\sum_{j=1}^m\mI_{(j)} \mA_\pi\mA_\pi^{\top}\mI_{(j)}\big)\mLambda_\pi^{1/2}\big\|_2, \\
    & \Hat{L} = \max_{\pi}\Hat{L}_{\pi}, \quad \Tilde{L} = \max_{\pi}\Tilde{L}_{\pi}.
\end{aligned}
\end{equation}
% where $\mI_{(j)} = \sum_{i = b(j - 1) + 1}^{bj}\mI_i$. 
Each of these constants is never larger than $L$, but can possibly be much smaller, by a similar argument as in Section~\ref{sec:intro}. For the first quantity $\Hat{L}_\pi$, we have the following sequence of relaxations compared to $L$:
$
\Hat{L}_\pi
\leq \frac{1}{mn}\sum_{j = 1}^m\|\mI_{b(j - 1)\uparrow} \mLambda^{1/2}_\pi\mA_\pi(\mLambda^{1/2}_\pi\mA_\pi)^{\top}\mI_{b(j - 1)\uparrow}\|_2
\leq \frac{1}{n}\|\mLambda^{1/2}\mA(\mLambda^{1/2}\mA)^{\top}\|_2
\leq \frac{1}{n}\sum_{i = 1}^n L_i\|\va_i\|_2^2
\leq L,
$
while the gap can be as large as order-$n$, as discussed in the introduction.  
For the second quantity $\Tilde{L}_\pi$ in~\eqref{eq:new-smoothness-constants}, we notice that $\textstyle\sum_{j=1}^m\mI_{(j)} \mLambda^{1/2}_\pi\mA_\pi(\mLambda^{1/2}_\pi\mA_\pi)^{\top}\mI_{(j)}$ is a block-diagonal matrix, for which the operator norm is the maximum of the operator norms over the block submatrices, thus $\Tilde{L}_\pi = \frac{1}{b}\max_{j \in [m]}\|\mI_{(j)} \mLambda^{1/2}_\pi\mA_\pi(\mLambda^{1/2}_\pi\mA_\pi)^{\top}\mI_{(j)}\|_2 \leq L$, where the relaxation mainly comes from the operator norm to trace inequality. For $b = 1$, the inequality is tight, as $\textstyle\sum_{j=1}^n\mI_{(j)} \mLambda^{1/2}_\pi\mA_\pi(\mLambda^{1/2}_\pi\mA_\pi)^{\top}\mI_{(j)}$ reduces to a diagonal matrix with the elements $L_{\pi_i}\|\va_{\pi_i}\|_2^2$. 
We provide further numerical results for the gap between $\Tilde{L}_\pi$ and $L$ w.r.t.\ different batch sizes $b$ in Appendix~\ref{appx:exp}.

Permutation-dependent quantities  $\hat{L}_\pi$ and $\Tilde{L}_\pi$ defined in \eqref{eq:new-smoothness-constants} are obtained directly from our analysis. 
Unlike prior work and to arrive at these quantities, we avoid resorting to global smoothness and aggregate information over a full data pass. 
Assuming that a uniformly random data shuffling strategy is used (SO or RR), the resulting bound on $\gT_2$ is summarized in Lemma~\ref{lemma:second-inner}, and we defer its proof to Appendix~\ref{appx:smooth}. 
\begin{restatable}{lemma}{secondInner}
\label{lemma:second-inner}
Under Assumption~\ref{assp:bounded-vr}, for any $k \in [K]$, the iterates $\{\vy_k^{(i)}\}_{i = 1}^m$ and $\{\vx_{k - 1, i}\}_{i = 1}^{m + 1}$ generated by Algorithm~\ref{alg:PD-shuffled-sgd} with uniformly random shuffling satisfy 
\begin{equation*}
\begin{aligned}
    % \E[\gT_{2}] \leq \;& \E\Big[\frac{\eta_k^3 n\Hat{L}_{\pi^{(k)}}\Tilde{L}_{\pi^{(k)}}}{b^2}\|\vy_k - \vy_{*, k}\|^2_{\mLambda_k^{-1}} \\
    % & + \frac{\eta_k}{2n}\|\vv_k - \vy_{k}\|_{\mLambda_k^{-1}}^2\Big] + \frac{\eta_k^3\Tilde{L}(n - b)(n + b)}{6b^2(n - 1)}\sigma_*^2. 
    \E[\gT_{2}] \leq \E\Big[\frac{\eta_k^3 n\Hat{L}_{\pi^{(k)}}\Tilde{L}_{\pi^{(k)}}}{b^2}\|\vy_k - \vy_{*, k}\|^2_{\mLambda_k^{-1}} + \frac{\eta_k}{2n}\|\vv_k - \vy_{k}\|_{\mLambda_k^{-1}}^2\Big] + \frac{\eta_k^3\Tilde{L}(n - b)(n + b)}{6b^2(n - 1)}\sigma_*^2. 
\end{aligned}
\end{equation*}
\end{restatable}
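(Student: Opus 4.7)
My plan is to (i)~telescope the primal updates of Algorithm~\ref{alg:PD-shuffled-sgd} to rewrite $\vx_k-\vx_{k-1,i}$ as a tail partial sum on the dual side, (ii)~apply one weighted Young's inequality to isolate the $\frac{\eta_k}{2n}\|\vv_k-\vy_k\|_{\mLambda_k^{-1}}^2$ term with the exact stated coefficient, and (iii)~split the residual quadratic in $\vy_k$ into a bias piece against $\vy_k-\vy_{*,k}$ (bounded via a spectral argument involving $\Hat{L}_{\pi^{(k)}}\Tilde{L}_{\pi^{(k)}}$) and a noise piece against $\vy_{*,k}$ (bounded in expectation via a without-replacement variance identity at $\vx_*$).

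Iterating the primal update of Algorithm~\ref{alg:PD-shuffled-sgd} over $j=i,\ldots,m$ gives $\vx_k-\vx_{k-1,i}=-\frac{\eta_k}{b}\sum_{j=i}^m(\mA_k^{(j)})^\top\vy_k^{(j)}$, and using the identity $\sum_{j\geq i}\mI_{(j)}=\mI_{b(i-1)\uparrow}$ to repackage the double sum yields $\gT_2=-\frac{\eta_k^2}{nb}(\vv_k-\vy_k)^\top\mM\vy_k$ with $\mM:=\sum_{i=1}^m\mI_{(i)}\mA_k\mA_k^\top\mI_{b(i-1)\uparrow}$. A weighted Young's inequality with parameter $\alpha=b/\eta_k$ (chosen so that $\frac{\eta_k^2\alpha}{2nb}=\frac{\eta_k}{2n}$) then bounds this by $\frac{\eta_k}{2n}\|\vv_k-\vy_k\|_{\mLambda_k^{-1}}^2+\frac{\eta_k^3}{2nb^2}\|\mLambda_k^{1/2}\mM\vy_k\|^2$. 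Writing $\vy_k=(\vy_k-\vy_{*,k})+\vy_{*,k}$ and using $\|\vu+\vw\|^2\leq 2(\|\vu\|^2+\|\vw\|^2)$ absorbs the factor~$2$ into $\frac{\eta_k^3}{nb^2}$ and reduces the task to controlling $\|\mLambda_k^{1/2}\mM(\vy_k-\vy_{*,k})\|^2$ and $\E\|\mLambda_k^{1/2}\mM\vy_{*,k}\|^2$ separately.

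For the bias piece, set $\mB:=\mLambda_k^{1/2}\mA_k$, so that by commutation of diagonal matrices $\mLambda_k^{1/2}\mM\mLambda_k^{1/2}=\sum_i\mI_{(i)}\mB\mB^\top\mI_{b(i-1)\uparrow}$. Because the ranges of $\{\mI_{(i)}\}$ are mutually orthogonal, for every $\vz$
\[
\|\mLambda_k^{1/2}\mM\mLambda_k^{1/2}\vz\|^2=\sum_i\|\mI_{(i)}\mB\mB^\top\mI_{b(i-1)\uparrow}\vz\|^2\leq b\Tilde{L}_{\pi^{(k)}}\sum_i\|\mB^\top\mI_{b(i-1)\uparrow}\vz\|^2,
\]
where I used $\|\mI_{(i)}\mB\mB^\top\mI_{(i)}\|_2\leq b\Tilde{L}_{\pi^{(k)}}$ (the matrix defining $\Tilde{L}_\pi$ is block-diagonal, so its operator norm equals the maximum block-norm). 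The inner sum equals $\vz^\top\bigl(\sum_i\mI_{b(i-1)\uparrow}\mB\mB^\top\mI_{b(i-1)\uparrow}\bigr)\vz\leq mn\Hat{L}_{\pi^{(k)}}\|\vz\|^2$ by the definition of $\Hat{L}_\pi$. Hence $\|\mLambda_k^{1/2}\mM\mLambda_k^{1/2}\|_2^2\leq bmn\Hat{L}_{\pi^{(k)}}\Tilde{L}_{\pi^{(k)}}=n^2\Hat{L}_{\pi^{(k)}}\Tilde{L}_{\pi^{(k)}}$, which passes through to $\|\mLambda_k^{1/2}\mM(\vy_k-\vy_{*,k})\|^2\leq n^2\Hat{L}_{\pi^{(k)}}\Tilde{L}_{\pi^{(k)}}\|\vy_k-\vy_{*,k}\|_{\mLambda_k^{-1}}^2$ and, after multiplying by $\frac{\eta_k^3}{nb^2}$, matches the target coefficient.

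For the noise piece, the same block-orthogonal decomposition yields $\|\mLambda_k^{1/2}\mM\vy_{*,k}\|^2\leq b\Tilde{L}_{\pi^{(k)}}\sum_i\|\mA_k^\top\mI_{b(i-1)\uparrow}\vy_{*,k}\|^2$, and each inner factor equals $\sum_{s>b(i-1)}\nabla f_{\pi^{(k)}_s}(\vx_*)=-\sum_{s\leq b(i-1)}\nabla f_{\pi^{(k)}_s}(\vx_*)$ since $\nabla f(\vx_*)=0$. Under a uniformly random permutation, $\mathrm{Cov}[\nabla f_{\pi_s}(\vx_*),\nabla f_{\pi_t}(\vx_*)]=-\sigma_*^2/(n-1)$ gives the standard without-replacement variance identity $\E\|\sum_{s=1}^k\nabla f_{\pi_s}(\vx_*)\|^2=k\cdot\frac{n-k}{n-1}\sigma_*^2$; summing over $i$ via the elementary identity $\sum_{i=1}^m b(i-1)(n-b(i-1))=\frac{b^2m(m^2-1)}{6}=\frac{m(n-b)(n+b)}{6}$ and replacing $\Tilde{L}_{\pi^{(k)}}$ by $\Tilde{L}$ (to pull the Lipschitz factor outside the expectation) produces exactly the $\frac{\eta_k^3\Tilde{L}(n-b)(n+b)}{6b^2(n-1)}\sigma_*^2$ term. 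The main obstacle is the spectral bound on $\|\mLambda_k^{1/2}\mM\mLambda_k^{1/2}\|_2$: it is crucial to exploit the block-disjoint structure of $\{\mI_{(i)}\}$ on the outer side (yielding $\Tilde{L}_\pi$) and the nested structure of $\{\mI_{b(i-1)\uparrow}\}$ on the inner side (yielding $\Hat{L}_\pi$) in precisely this order, since a naive triangle inequality would reintroduce the factor-of-$n$ losses the refined analysis is designed to avoid.
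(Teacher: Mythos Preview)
Your proof is correct and follows essentially the same route as the paper: the telescoping to obtain $\gT_2=-\frac{\eta_k^2}{bn}\sum_i\langle\mA_k^\top\mI_{(i)}(\vv_k-\vy_k),\mA_k^\top\mI_{b(i-1)\uparrow}\vy_k\rangle$, the spectral bounds via $\Hat L_{\pi^{(k)}}$ and $\Tilde L_{\pi^{(k)}}$, and the without-replacement variance computation are identical. The only organizational difference is that the paper first splits $\vy_k=(\vy_k-\vy_{*,k})+\vy_{*,k}$ in the inner product and then applies Young's inequality twice (each contributing $\frac{\eta_k}{4n}\|\vv_k-\vy_k\|_{\mLambda_k^{-1}}^2$), whereas you apply a single Young's inequality and then split via $\|\vu+\vw\|^2\le 2\|\vu\|^2+2\|\vw\|^2$; both paths land on exactly the same constants.
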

Combining Lemmas~\ref{lemma:gap-bound}-\ref{lemma:second-inner}, we arrive at Theorem~\ref{thm:convergence}, whose proof is deferred to Appendix~\ref{appx:smooth}.
\begin{restatable}{theorem}{convergence}
\label{thm:convergence}
Under Assumptions~\ref{assp:convex},~\ref{assp:smooth}~and~\ref{assp:bounded-vr}, if $\eta_k \leq \frac{b}{n\sqrt{2\Hat{L}_{\pi^{(k)}}\Tilde{L}_{\pi^{(k)}}}}$ 
and $H_K = \sum_{k = 1}^K\eta_k$, the output $\vxh_K$ of 
Alg.~\ref{alg:PD-shuffled-sgd} with uniformly random (SO/RR) shuffling satisfies 
\begin{align*}
% \;& \E[H_K(f(\Hat{\vx}_K) - f(\vx_*))] \\
% \leq \;& \frac{b}{2n}\|\vx_0 - \vx_*\|_2^2 + \sum_{k=1}^K\frac{\eta_k^3\Tilde{L}(n - b)(n + b)}{6b^2(n - 1)}\sigma_*^2.
\E[H_K(f(\Hat{\vx}_K) - f(\vx_*))] \leq \frac{b}{2n}\|\vx_0 - \vx_*\|_2^2 + \sum_{k=1}^K\frac{\eta_k^3\Tilde{L}(n - b)(n + b)}{6b^2(n - 1)}\sigma_*^2.
\end{align*}
Hence, given $\epsilon > 0,$ there exists a constant step size $\eta_k = \eta$ such that  $\E[f(\Hat{\vx}_K) - f(\vx_*)] \leq \epsilon$ after $\gO\big(\frac{n\sqrt{\Hat{L}\Tilde{L}}\|\vx_0 - \vx_*\|_2^2}{\epsilon} + \sqrt{\frac{(n - b)(n + b)}{n(n - 1)}}\frac{\sqrt{n\Tilde{L}}\sigma_*\|\vx_0 - \vx_*\|_2^2}{\epsilon^{3/2}}\big)$ gradient queries.
\end{restatable}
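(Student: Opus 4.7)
The plan is to chain Lemmas~\ref{lemma:gap-bound},~\ref{lemma:first-inner},~and~\ref{lemma:second-inner} into a single per-epoch recursion, telescope across $K$ epochs, and then convert the resulting primal-dual gap bound into a function-value gap bound via linearity of $\gL(\cdot,\vv)$ in its first argument.

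First, I would substitute the identity from Lemma~\ref{lemma:first-inner} into Lemma~\ref{lemma:gap-bound}; the $\frac{b}{2n}\sum_{i}\|\vx_{k-1,i}-\vx_{k-1,i+1}\|_2^2$ terms cancel exactly, leaving a residual $-\frac{b}{2n}\|\vx_{k-1}-\vx_k\|_2^2$. I would then take expectation over $\pi^{(k)}$ and apply Lemma~\ref{lemma:second-inner} to control $\E[\gT_{2}]$. The $\frac{\eta_k}{2n}\|\vv_k-\vy_k\|_{\mLambda_k^{-1}}^2$ term produced by Lemma~\ref{lemma:second-inner} cancels the matching negative term already present in Lemma~\ref{lemma:gap-bound}, while the residual $\|\vy_k-\vy_{*,k}\|_{\mLambda_k^{-1}}^2$ quadratic picks up coefficient $\frac{\eta_k^3 n\Hat{L}_{\pi^{(k)}}\Tilde{L}_{\pi^{(k)}}}{b^2}-\frac{\eta_k}{2n}$; the step-size restriction $\eta_k\leq\frac{b}{n\sqrt{2\Hat{L}_{\pi^{(k)}}\Tilde{L}_{\pi^{(k)}}}}$ is precisely what forces this coefficient to be non-positive. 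Dropping this quadratic together with the non-positive $-\frac{b}{2n}\|\vx_{k-1}-\vx_k\|_2^2$ yields the clean per-epoch recursion
\[
\E\bigl[\eta_k\,\mathrm{Gap}^{\vv}(\vx_k,\vy_*)\bigr]+\tfrac{b}{2n}\E\|\vx_*-\vx_k\|_2^2-\tfrac{b}{2n}\E\|\vx_*-\vx_{k-1}\|_2^2\leq \frac{\eta_k^3\Tilde{L}(n-b)(n+b)}{6b^2(n-1)}\sigma_*^2,
\]
valid for every $\vv\in\R^n$.

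Next I would telescope this recursion over $k=1,\ldots,K$ and discard the nonnegative $\tfrac{b}{2n}\E\|\vx_*-\vx_K\|_2^2$ on the left, which directly yields the first displayed inequality of the theorem. To convert this gap bound into a bound on $\E[f(\Hat{\vx}_K)-f(\vx_*)]$, I would exploit that $\gL(\cdot,\vv)$ is \emph{linear} in $\vx$, so $H_K\gL(\Hat{\vx}_K,\vv)=\sum_{k}\eta_k\gL(\vx_k,\vv)$ exactly, and hence $\sum_k\eta_k\mathrm{Gap}^{\vv}(\vx_k,\vy_*)=H_K(\gL(\Hat{\vx}_K,\vv)-f(\vx_*))$. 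Choosing $\vv=\vy_{\Hat{\vx}_K}$ then gives $\gL(\Hat{\vx}_K,\vv)=f(\Hat{\vx}_K)$. The main obstacle is that $\vy_{\Hat{\vx}_K}$ is random, so the primal-dual-to-function-value handoff has to be executed carefully; it goes through because Lemma~\ref{lemma:gap-bound} holds pathwise for \emph{every} $\vv$, and all $\vv$-dependent terms produced by Lemma~\ref{lemma:second-inner} cancel so that the telescoped right-hand side is both $\vv$-independent and (for deterministic step sizes) deterministic in the permutation randomness.

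Finally, to extract the oracle complexity I would take $\eta_k=\eta$ constant, relax $\Hat{L}_{\pi^{(k)}}\leq\Hat{L}$ and $\Tilde{L}_{\pi^{(k)}}\leq\Tilde{L}$, and divide by $H_K=K\eta$, reducing the statement to an error bound of the form $\frac{bD^2}{2nK\eta}+\frac{\eta^2\Tilde{L}(n-b)(n+b)}{6b^2(n-1)}\sigma_*^2$ subject to $\eta\leq\frac{b}{n\sqrt{2\Hat{L}\Tilde{L}}}$. Setting $\eta$ at the step-size boundary and forcing the first term below $\epsilon$ yields the $\gO\bigl(\tfrac{n\sqrt{\Hat{L}\Tilde{L}}D^2}{\epsilon}\bigr)$ contribution to $nK$, while choosing $\eta$ smaller so as to equalize the two error terms yields the $\gO\bigl(\sqrt{\tfrac{(n-b)(n+b)}{n(n-1)}}\cdot\tfrac{\sqrt{n\Tilde{L}}\sigma_*D^2}{\epsilon^{3/2}}\bigr)$ contribution; adding these two contributions gives the stated oracle complexity.
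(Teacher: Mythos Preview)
Your proposal is correct and follows essentially the same route as the paper: combine Lemmas~\ref{lemma:gap-bound}--\ref{lemma:second-inner}, use the step-size condition to kill the $\|\vy_k-\vy_{*,k}\|_{\mLambda_k^{-1}}^2$ term, telescope, average, and then substitute $\vv=\vy_{\Hat{\vx}_K}$. The only cosmetic difference is that the paper averages via convexity of $\gL(\cdot,\vv)$ (Jensen) whereas you note $\gL(\cdot,\vv)$ is in fact affine in $\vx$ so the averaging is an equality; your remark that the pathwise right-hand side is $\vv$-independent after cancellations is exactly what makes the random choice $\vv=\vy_{\Hat{\vx}_K}$ legitimate, a point the paper leaves implicit.
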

A few remarks are in order here. First, the worst-case value $\hat{L}$ of $\hat{L}_\pi$ appears in our final complexity bound because of the need to select a constant, deterministic step size, as is commonly done in practice by tuning. However, we observe in our numerical experiments (see Appendix~\ref{appx:exp}) that the random values $\hat{L}_{\pi}$ are highly concentrated. Thus, in principle, $\hat{L}$ can be replaced by a ``high probability'' upper bound $\hat{L}_{\rm high prob}$ on $\hat{L}_{\pi}$ over uniformly random permutations, by conditioning on the event that $\hat{L}_k \leq \hat{L}_{\rm high prob}.$ This leads to a potentially weaker probabilistic guarantee, but one that is nevertheless better aligned with empirical experience. 

We now discuss some special cases of Theorem~\ref{thm:convergence}. When $b = n$, we recover the standard guarantee of gradient descent, which serves as a sanity check as in this case the algorithm reduces to standard gradient descent. When $\eps = \Omega(\frac{(n - b)(n + b)\sigma_*^2}{n^2(n - 1)\Hat{L}}),$ the resulting complexity is $\gO\big(\frac{n\sqrt{\Hat{L}\Tilde{L}}\|\vx_0 - \vx_*\|_2^2}{\epsilon}\big).$ Observe that this case can happen when either $\epsilon$ is large (compared to, say, $1/{n}$) or when $\sigma_*$ is small (it is, in fact, possible for $\sigma_*$ to be zero, which happens, for example, when the data rows are linearly independent). Unlike in bounds from previous work, we observe from our bounds the benefit of using shuffled SGD compared to full gradient descent, where the difference is by a factor  that can be as large as $\sqrt{n},$ as we have discussed in the introduction. When $\epsilon = \gO(\frac{(n - b)(n + b)\sigma_*^2}{n^2(n - 1)\Hat{L}}),$ the second term in our complexity bound dominates. In this case, when $b=1$, we recover the state of the art results from \citet{mishchenko2020random, nguyen2021unified,cha2023tighter}, while for $b > 1$ our bound provides the $\Omega\big(\sqrt{\frac{n(n - 1)}{(n - b)(n + b)}\cdot \frac{L}{\Tilde{L}}}\big)$-factor improvement, providing insights into benefits from  the mini-batching strategy commonly used in practice.    

\paragraph{Incremental gradient descent.}When using the deterministic data ordering without any random shuffling, shuffled SGD reduces to incremental gradient descent. We define $\Hat{L}_0 = \hat{L}_{\pi^{(0)}}$ and $\Tilde{L}_0 = \Tilde{L}_{\pi^{(0)}}$ w.r.t.\ the initial, fixed permutation $\pi^{(0)}$ of the data matrix $\mA$, and summarize the results in Theorem~\ref{thm:convergence-IGD}. The proof is provided in Appendix~\ref{appx:smooth}. 
While 
the provided complexity of IG can be order-$n^{1/2}$ worse than the complexity of RR/SO for small target error $\epsilon$, it aligns with prior results where a similar $\gO(n^{1/2})$ loss was incurred when removing random shuffling~\citep{mishchenko2020random}. 
\begin{restatable}{theorem}{convergenceIGD}
\label{thm:convergence-IGD}
Under Assumptions~\ref{assp:convex},~\ref{assp:smooth}~and~\ref{assp:bounded-vr}, if $\eta_k \leq \frac{b}{n\sqrt{2\Hat{L}_0\Tilde{L}_0}}$ and $H_K = \sum_{k = 1}^K\eta_k$, the output $\vxh_K$ of  Alg.~\ref{alg:PD-shuffled-sgd} with a fixed permutation satisfies 
\begin{align*}
    % \;& H_K\big(f(\Hat{\vx}_K) - f(\vx_*)\big) \leq \frac{b}{2n}\|\vx_0 - \vx_*\|_2^2 \\
    % & + \sum_{k=1}^K \min\Big\{\frac{\eta_k^3 n}{b^2}\Hat{L}_0\Tilde{L}_0\|\vy_{*}\|^2_{\mLambda^{-1}}, \frac{\eta_k^3 (n - b)^2}{b^2}\Tilde{L}_0\sigma_*^2\Big\}.
    H_K\big(f(\Hat{\vx}_K) - f(\vx_*)\big) \leq \frac{b}{2n}\|\vx_0 - \vx_*\|_2^2 + \sum_{k=1}^K \min\Big\{\frac{\eta_k^3 n}{b^2}\Hat{L}_0\Tilde{L}_0\|\vy_{*}\|^2_{\mLambda^{-1}}, \frac{\eta_k^3 (n - b)^2}{b^2}\Tilde{L}_0\sigma_*^2\Big\}.
\end{align*}
{As a consequence, given $\epsilon > 0,$ there exists a constant step size $\eta_k = \eta$ such that $f(\Hat{\vx}_K) - f(\vx_*) \leq \epsilon$ after the number of gradient queries bounded by  $\gO\Big(\frac{n\sqrt{\Hat{L}_0\Tilde{L}_0}\|\vx_0 - \vx_*\|_2^2}{\epsilon} + \frac{\min\big\{\sqrt{n\Hat{L}_0\Tilde{L}_0}\|\vy_{*}\|_{\mLambda^{-1}},\, (n - b)\sqrt{\Tilde{L}_0}\sigma_*\big\}\|\vx_0 - \vx_*\|_2^2}{\epsilon^{3/2}}\Big)$.}
\end{restatable}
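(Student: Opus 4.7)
The plan is to mirror the proof of Theorem~\ref{thm:convergence} but, because IG uses a single fixed ordering $\pi^{(0)}$ rather than random reshuffling, to replace Lemma~\ref{lemma:second-inner} by a purely deterministic bound on $\gT_2$. Starting from Lemma~\ref{lemma:gap-bound} applied with $\vv = \vy_{\vx_k}$ (so that $\mathrm{Gap}^{\vv}(\vx_k, \vy_*) = f(\vx_k) - f(\vx_*)$) and immediately invoking Lemma~\ref{lemma:first-inner} to cancel $\gT_1$ against the squared-step-difference term in~\eqref{eq:gap-bound}, the argument reduces to controlling $\gT_2$ by a sum of three pieces: one that can be absorbed by $-\tfrac{\eta_k}{2n}\|\vy_k - \vy_{*,k}\|_{\mLambda_k^{-1}}^2$, one that can be absorbed by $-\tfrac{\eta_k}{2n}\|\vv_k - \vy_k\|_{\mLambda_k^{-1}}^2$, and a residual ``noise'' piece involving $\vy_{*,k}$ that is not absorbed.

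Using the primal update, $\vx_k - \vx_{k-1,i} = -\tfrac{\eta_k}{b}\mA_k^{\top}\mI_{b(i-1)\uparrow}\vy_k$, where now $\mA_k \equiv \mA_{\pi^{(0)}}$ for every $k$, one rewrites
\begin{equation*}
\gT_2 \;=\; -\frac{\eta_k^2}{nb}\sum_{i=1}^m \bigl(\vv_k^{(i)} - \vy_k^{(i)}\bigr)^{\top}\mA_k^{(i)}\mA_k^{\top}\mI_{b(i-1)\uparrow}\vy_k.
\end{equation*}
Decomposing $\vv_k - \vy_k = (\vv_k - \vy_{*,k}) - (\vy_k - \vy_{*,k})$ and, inside the inner product, $\vy_k = \vy_{*,k} + (\vy_k - \vy_{*,k})$, produces a symmetric expression in $(\vv_k - \vy_{*,k})$, $(\vy_k - \vy_{*,k})$ and $\vy_{*,k}$. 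Applying Cauchy--Schwarz together with the operator-norm definitions of $\Hat{L}_0,\Tilde{L}_0$ in~\eqref{eq:new-smoothness-constants} absorbs the purely quadratic contributions under the step-size condition $\eta_k \leq \tfrac{b}{n\sqrt{2\Hat{L}_0\Tilde{L}_0}}$, leaving only the cross pieces that contain $\vy_{*,k}$.

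The remaining cross piece is the origin of the $\min\{\cdot,\cdot\}$ and is bounded in two complementary ways. The first keeps the full cross-block matrix structure, reapplies the inequalities that define $\Hat{L}_0$ and $\Tilde{L}_0$, and produces the branch $\tfrac{\eta_k^3 n \Hat{L}_0\Tilde{L}_0}{b^2}\|\vy_*\|_{\mLambda^{-1}}^2$. The second bound discards the cross-block structure and uses only the block-diagonal constant $\Tilde{L}_0$ together with the identity $\tfrac{1}{n}\sum_i\|\va_i\vy_*^i\|_2^2 = \sigma_*^2$; summing $\|\mA_k^{\top}\mI_{b(i-1)\uparrow}\vy_{*,k}\|_2$ across inner iterations aggregates $n - b(i-1)$ residual terms per $i$ in the worst case, which after summation yields the $(n-b)^2/b^2$ scaling in front of $\sigma_*^2$ -- in contrast to the averaged factor $(n-b)(n+b)/(6(n-1))$ available in Lemma~\ref{lemma:second-inner}. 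Telescoping the resulting per-epoch recursion over $k \in [K]$, applying convexity to pass from $\sum_k \eta_k f(\vx_k)$ to $H_K f(\Hat{\vx}_K)$, and optimizing a constant step size by balancing the $\|\vx_0 - \vx_*\|_2^2/\eta$ term against the cubic-in-$\eta$ noise then yields the claimed oracle complexity.

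The main obstacle is the cross term: in Lemma~\ref{lemma:second-inner}, the random permutation makes it have zero mean so that only a $\sigma_*^2$ variance contribution survives, whereas in the IG setting no such cancellation is available and one must control the cross term by two distinct estimates to obtain the two branches of the $\min$. The $(n-b)^2$ factor in the $\sigma_*^2$ branch, strictly worse than the $(n-b)(n+b)/(n-1)$ factor achievable under random shuffling, is the unavoidable cost of a deterministic ordering and is precisely the source of the $\sqrt{n}$-factor gap to the RR/SO complexity noted after the theorem.
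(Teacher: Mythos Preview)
Your overall route matches the paper's: apply Lemmas~\ref{lemma:gap-bound} and~\ref{lemma:first-inner} to reduce everything to $\gT_2$, bound it deterministically, absorb the quadratic pieces via the step-size condition, and treat the leftover $\vy_*$-term in two ways to obtain the $\min$. Two places in your plan need sharpening.

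First, the double decomposition $\vv_k - \vy_k = (\vv_k - \vy_{*,k}) - (\vy_k - \vy_{*,k})$ is more intricate than required and, once you apply Young's inequality to the resulting bilinear pieces, will produce a contribution proportional to $\|\vv_k - \vy_{*,k}\|_{\mLambda^{-1}}^2$. That quantity is \emph{not} directly absorbable by either of the two retractions you actually have, namely $-\tfrac{\eta_k}{2n}\|\vy_k-\vv_k\|_{\mLambda^{-1}}^2$ and $-\tfrac{\eta_k}{2n}\|\vy_k-\vy_{*,k}\|_{\mLambda^{-1}}^2$. The paper (Lemma~\ref{lemma:second-inner-IGD}) keeps $\vv_k - \vy_k$ intact and only splits the right factor $\vy_k = (\vy_k - \vy_*) + \vy_*$; with the Young parameter $\alpha = 2\eta_k\Tilde{L}_0$ each of the two resulting inner products then contributes exactly $\tfrac{\eta_k}{4n}\|\vv - \vy_k\|_{\mLambda^{-1}}^2$, so their sum cancels the retraction with no slack in the constants.

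Second, and more importantly, the $\sigma_*^2$ branch of the $\min$ hinges on the first-order optimality condition $\mA^\top\vy_* = n\nabla f(\vx_*) = 0$, which you never invoke. Without it, the $i=1$ summand $\|\mA^\top\mI_{0\uparrow}\vy_*\|_2^2 = \|\mA^\top\vy_*\|_2^2$ does not vanish, and your ``$n - b(i-1)$ residual terms per $i$'' count will not collapse to $(n-b)^2/b^2$. In the paper's Lemma~\ref{lemma:second-inner-IGD} this identity is used to drop the $i=1$ term and to flip tail sums to head sums, $\sum_{j>bi}\vy_*^j\va_j = -\sum_{j\leq bi}\vy_*^j\va_j$, before applying Cauchy--Schwarz; this is exactly what forces the factor $(n-b)^2$ and makes the residual disappear when $b=n$.

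A minor aside: the RR/SO improvement in Lemma~\ref{lemma:second-inner} does not come from the cross term having zero mean under the permutation. The decomposition there is the same as in the IG case; the gain is purely from applying the without-replacement variance bound (Lemma~\ref{lem:batch-vr}) to $\E_{\pi}\big[\|\mA_k^\top\mI_{b(i-1)\uparrow}\vy_{*,k}\|_2^2\big]$.
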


\subsection{Nonsmooth Minimization}\label{sec:nonsmooth}
We now extend our analysis of Algorithm~\ref{alg:PD-shuffled-sgd} to convex nonsmooth Lipschitz settings, 
where the conjugate functions $\ell_i^*(y^i)$ are only convex. Proceeding as in  Lemma~\ref{lemma:gap-bound}, we obtain a bound on the primal-dual gap similar to~\eqref{eq:gap-bound}, but lose two retraction terms induced by smoothness.
Instead of cancelling the corresponding error terms as in the smooth case, we rely on the boundedness of the subgradients to bound these terms under a sufficiently small step size, as is common in nonsmooth Lipschitz settings. 
As in Section~\ref{sec:smooth}, we introduce the following quantities to obtain a tighter guarantee w.r.t.\ the data matrix and Lipschitz constants 
\begin{align}
%\begin{aligned}
    \Hat{G}_\pi := \;& \frac{1}{mn}\big\|\mGamma^{1/2}_\pi\big(\textstyle\sum_{j=1}^m\mI_{b(j - 1)\uparrow} \mA_\pi\mA_\pi^{\top}\mI_{b(j - 1)\uparrow}\big)\mGamma_\pi^{1/2}\big\|_2, \notag\\
    \Tilde{G}_\pi := \;& \frac{1}{b}\big\|\mGamma_\pi^{1/2}\big(\textstyle \sum_{j=1}^m\mI_{(j)} \mA_\pi\mA_\pi^{\top}\mI_{(j)}\big)\mGamma_{\pi}^{1/2}\big\|_2. \label{eq:new-Lip-condition}
%\end{aligned}
\end{align}
Theorem~\ref{thm:convergence-nonsmooth} describes the convergence of Algorithm~\ref{alg:PD-shuffled-sgd}, and the proof is deferred to Appendix~\ref{appx:nonsmooth}.
\begin{restatable}{theorem}{convergenceNonsmooth}
\label{thm:convergence-nonsmooth}
Under Assumptions~\ref{assp:convex}~and~\ref{assp:Lipschitz}, if $H_K = \sum_{k = 1}^K\eta_k$ and $\Bar{G} = \E_{\pi}[\sqrt{\Hat{G}_{\pi}\Tilde{G}_{\pi}}]$, the output $\vxh_K$ of Alg.~\ref{alg:PD-shuffled-sgd} with possible uniformly random shuffling satisfies 
\begin{equation}\notag
    \E[H_K(f(\Hat{\vx}_K) - f(\vx_*))] \leq \frac{b}{2n}\|\vx_0 - \vx_*\|_2^2 + \sum_{k = 1}^K\frac{2\eta_k^2 n \Bar{G}}{b}, 
\end{equation}
As a consequence, for any $\epsilon > 0,$ there exists a step size $\eta_k = \eta$ such that $\E[f(\Hat{\vx}_K) - f(\vx_*)] \leq \epsilon$ after $\gO\big(\frac{n\Bar{G}\|\vx_0 - \vx_*\|_2^2}{\epsilon^2}\big)$ gradient queries.
\end{restatable}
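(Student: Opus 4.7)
\textbf{Proof Proposal (for Theorem~\ref{thm:convergence-nonsmooth}).} The plan is to mimic the three-step skeleton developed for the smooth case (Lemmas~\ref{lemma:gap-bound}, \ref{lemma:first-inner}, \ref{lemma:second-inner}), and then patch the places where smoothness was used. First, I would derive a nonsmooth analog of Lemma~\ref{lemma:gap-bound}. The inner dual update in Line~6 is an exact maximization, so from its optimality we obtain $\vy_k^{(i)\top}\mA_k^{(i)}\vx_{k-1,i} - \sum_j \ell^*_{\pi^{(k)}_{b(i-1)+j}}(\vy_k^{(i),j}) \geq \vv_k^{(i)\top}\mA_k^{(i)}\vx_{k-1,i} - \sum_j \ell^*_{\pi^{(k)}_{b(i-1)+j}}(\vv_k^{(i),j})$, with \emph{no} quadratic slack (because each $\ell_i^*$ is only convex, not strongly convex). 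The primal minimization in Line~7 still gives the standard three-point identity, yielding the telescoping term $\tfrac{b}{2n}\|\vx_*-\vx_{k-1}\|_2^2-\tfrac{b}{2n}\|\vx_*-\vx_k\|_2^2$ and the negative retraction $-\tfrac{b}{2n}\sum_i\|\vx_{k-1,i}-\vx_{k-1,i+1}\|_2^2$. Summing over $i$ and rearranging to isolate $\eta_k\mathrm{Gap}^\vv(\vx_k,\vy_*)$ reproduces~\eqref{eq:gap-bound} but \emph{without} the two Mahalanobis terms $-\tfrac{\eta_k}{2n}\|\vy_k-\vv_k\|_{\mLambda_k^{-1}}^2-\tfrac{\eta_k}{2n}\|\vy_k-\vy_{*,k}\|_{\mLambda_k^{-1}}^2$.

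Next, $\gT_1$ is handled verbatim by Lemma~\ref{lemma:first-inner}: it collapses the positive $\tfrac{b}{2n}\sum_i\|\vx_{k-1,i}-\vx_{k-1,i+1}\|_2^2$ contribution against the retraction and leaves behind only $-\tfrac{b}{2n}\|\vx_{k-1}-\vx_k\|_2^2 \leq 0$. The crux is therefore $\gT_2=\tfrac{\eta_k}{n}\sum_{i=1}^m(\vv_k^{(i)}-\vy_k^{(i)})^\top\mA_k^{(i)}(\vx_k-\vx_{k-1,i})$. Using the primal update to write $\vx_k-\vx_{k-1,i} = -\tfrac{\eta_k}{b}\sum_{j=i}^m \mA_k^{(j)\top}\vy_k^{(j)}$, we obtain the bilinear form
\begin{equation*}
    \gT_2 \;=\; -\frac{\eta_k^2}{bn}\sum_{i=1}^m\sum_{j=i}^m \bigl(\vv_k^{(i)}-\vy_k^{(i)}\bigr)^\top \mA_k^{(i)}\mA_k^{(j)\top}\vy_k^{(j)}.
\end{equation*}
Unlike in the smooth case, we cannot split this into pieces absorbable by strong-convexity slack. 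Instead, I would use the boundedness of the dual iterates: Assumption~\ref{assp:Lipschitz} gives $|\vy_k^{i}|\leq G_i$, $|\vy_{*,k}^{i}|\leq G_i$, and we may take $\vv$ with $|\vv^i|\leq G_i$ as well (since the relevant choice to recover $f(\Hat{\vx}_K)-f(\vx_*)$ is $\vv=\vy_{\Hat{\vx}_K}$, which is a subgradient). Writing the double sum as a single quadratic form in $\vy_k$ and $\vv_k-\vy_k$ with block-upper-triangular Gram matrix $\sum_i \mI_{(i)}\mA_k\mA_k^\top \mI_{(i-1)b\uparrow}$, and applying Cauchy--Schwarz after diagonal scaling by $\mGamma_k^{1/2}$ and $\mGamma_k^{-1/2}$ in the appropriate factors, produces exactly the operator norms that define $\Hat{G}_{\pi^{(k)}}$ and $\Tilde{G}_{\pi^{(k)}}$. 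The resulting estimate is $|\gT_2| \leq \tfrac{2\eta_k^2 n}{b}\sqrt{\Hat{G}_{\pi^{(k)}}\Tilde{G}_{\pi^{(k)}}}$ (up to the constant in the statement), which matches the definition $\Bar G = \E_\pi[\sqrt{\Hat{G}_\pi\Tilde{G}_\pi}]$ after taking expectation over $\pi^{(k)}$; for deterministic IG, the same bound holds with the fixed-permutation value.

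Finally, I would sum $\gE_k$ over $k=1,\dots,K$, telescope the $\tfrac{b}{2n}\|\vx_*-\vx_{k-1}\|_2^2$ terms, take expectation, and apply Jensen's inequality via the weights $\eta_k/H_K$ (using convexity of $f$ and the fact that for $\vv=\vy_{\Hat{\vx}_K}$ one has $\mathrm{Gap}^\vv(\vx_k,\vy_*)\geq f(\vx_k)-f(\vx_*)$, plus averaging) to obtain the displayed bound on $\E[H_K(f(\Hat{\vx}_K)-f(\vx_*))]$. Choosing $\eta_k\equiv\eta$ and optimizing over $\eta$, i.e.\ balancing $\tfrac{b\|\vx_0-\vx_*\|_2^2}{2n\eta K}$ against $\tfrac{2\eta n\Bar G}{b}$, yields $\eta=\Theta\bigl(\tfrac{b}{n\sqrt{\Bar G K}}\|\vx_0-\vx_*\|_2\bigr)$ and the stated $\gO\bigl(n\Bar G\|\vx_0-\vx_*\|_2^2/\epsilon^2\bigr)$ gradient-query complexity. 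The main obstacle I anticipate is the bookkeeping of $\gT_2$: making the Cauchy--Schwarz split so that the two norms that emerge are exactly the operator norms appearing in~\eqref{eq:new-Lip-condition} (rather than something looser), and confirming that permutation-averaging inside the square root is legitimate (producing $\E_\pi[\sqrt{\Hat G_\pi\Tilde G_\pi}]$ and not a larger quantity like $\sqrt{\E_\pi[\Hat G_\pi\Tilde G_\pi}]$); everything else is routine telescoping and step-size tuning.
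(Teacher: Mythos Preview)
Your proposal is correct and follows essentially the same route as the paper: a nonsmooth variant of Lemma~\ref{lemma:gap-bound} without the Mahalanobis slack terms, Lemma~\ref{lemma:first-inner} verbatim for $\gT_1$, then Young's inequality (your ``Cauchy--Schwarz'') on $\gT_2$ with the $\mGamma_k^{\pm 1/2}$ scaling to extract $\Hat G_{\pi^{(k)}}$ and $\Tilde G_{\pi^{(k)}}$, followed by the Lipschitz bounds $\|\vy_k\|_{\mGamma_k^{-1}}^2\le n$ and $\|\vv\|_{\mGamma^{-1}}^2\le n$ to reach $\gT_2\le \tfrac{2\eta_k^2 n}{b}\sqrt{\Hat G_{\pi^{(k)}}\Tilde G_{\pi^{(k)}}}$. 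Your worry about permutation-averaging is unfounded: since $\sqrt{\Hat G_{\pi^{(k)}}\Tilde G_{\pi^{(k)}}}$ depends only on $\pi^{(k)}$, its (conditional) expectation is exactly $\Bar G$.

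One small correction in your final step: the claim $\mathrm{Gap}^\vv(\vx_k,\vy_*)\ge f(\vx_k)-f(\vx_*)$ has the inequality backwards, since $\gL(\vx_k,\vv)\le \max_\vw\gL(\vx_k,\vw)=f(\vx_k)$. The paper does not compare $\mathrm{Gap}^\vv(\vx_k,\vy_*)$ to $f(\vx_k)-f(\vx_*)$ termwise; instead it uses convexity of $\gL(\cdot,\vv)$ in $\vx$ to average \emph{first}, obtaining $H_K\,\mathrm{Gap}^\vv(\Hat\vx_K,\vy_*)\le\sum_k\eta_k\,\mathrm{Gap}^\vv(\vx_k,\vy_*)$, and only \emph{then} sets $\vv=\vy_{\Hat\vx_K}$ so that the left-hand side becomes $H_K(f(\Hat\vx_K)-f(\vx_*))$. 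With this fix, your argument is the paper's.
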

We now briefly discuss this result. The total number of individual gradient queries is $\gO\big(\frac{n\Bar{G}\|\vx_0 - \vx_*\|_2^2}{\epsilon^2}\big)$, which appears independent of the batch size, but this is actually not the case, as the parameter $\Bar{G} = \E_{\pi}[\sqrt{\Hat{G}_{\pi}\Tilde{G}_{\pi}}]$ depends on the block partitioning, due to \eqref{eq:new-Lip-condition}. When $b = n,$ as a sanity check, we recover the standard guarantee of (full) subgradient descent, which is expected, as in this case shuffled SGD reduces to subgradient descent. When $b = 1,$ however, the bound is worse than the corresponding bound for standard SGD, by a factor $\gO(n\Bar{G}/G^2)$. By a similar sequence of inequalities as in \eqref{eq:seq-of-relaxations}, this factor is never worse than $n,$ but it is typically much smaller, taking values as small as 1. We note that it is not known whether a better bound is possible for shuffled SGD in this setting, as the only seemingly tighter upper bound from 
\citet{shamir2016without} 
applies only for constant $K$, when $n = \Omega(\frac{1}{\epsilon^2})$, and under an additional boundedness assumption for the algorithm iterates. 

\section{Numerical Results and Discussion}\label{sec:exp}

\begin{figure}[t!]
    \centering
    \hspace{\fill}
    {\includegraphics[width=0.4\textwidth]{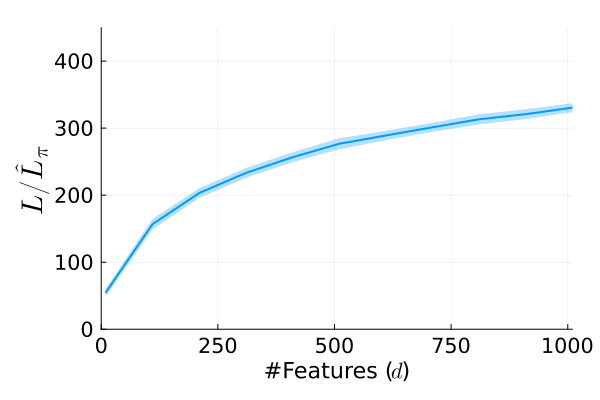}\label{fig:L-gaussian-compare-n}} \hspace{\fill}
    % \hspace{\fill}
    {\includegraphics[width=0.4\textwidth]{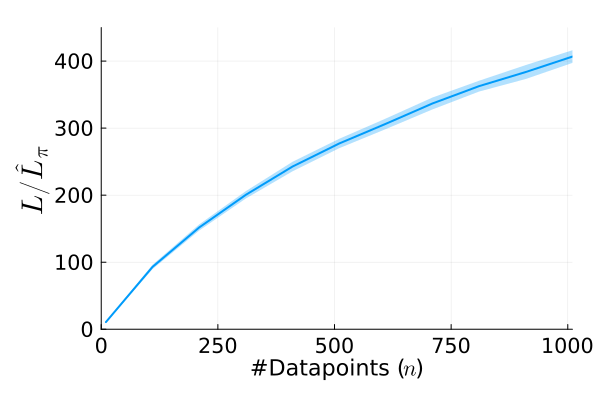}\label{fig:L-gaussian-compare-d}}\hspace*{\fill}
    \caption{Evaluation of $L / \Hat{L}_\pi$ on synthetic Gaussian datasets, where the solid lines represent the mean values and filled regions indicate the standard deviation of $100$ random permutations of $\pi$ in each randomly generated dataset. We fix the number of datapoints $n$ and the number of features $d$ to $500$ for the left and right plots, respectively. We  observe that $L / \Hat{L}_\pi$ grows with both  $d$ and $n$, thus supporting our claim that the bounds we proved are much tighter than previously known bounds.}
    \label{fig:gaussian-compare}
\end{figure}

\begin{table*}[t!]
% \caption{The following table shows the computed values of $L / \hat{L}$ where $\hat{L}$ is the empirical mean of $\hat{L}_{\pi}$ over random permutations. For presentation purposes the values are rounded down to $3$ significant digits.}
\caption{The following table shows the computed values of $L / \hat{L}$ where $\hat{L}$ is the empirical mean of $\hat{L}_{\pi}$ over random permutations. We note that the quantity $\sqrt{L / \hat{L}}$ represents the improvement provided by the bound via our novel primal-dual perspective, compared to previous work.}
\label{table:constant-compare-dataset}
\vskip 0.15in
\begin{center}
\begin{small}
\begin{sc}
\begin{tabular}{lccccc}
    \toprule
    Dataset & \#Features ($d$) & \#Datapoints ($n$) & $L/\Hat{L}$ & $\log_n L/\Hat{L}$ & $\log_{\min(d, n)} L/\Hat{L}$ \\
    \midrule
    a1a & 123 & 1605 & 5.50 & 0.231 & 0.354 \\
    a9a & 123 & 32561 & 5.49 & 0.164 & 0.354 \\
    BBBC005 & 361920 & 19201 & 18.3 & 0.295 & 0.295 \\
    BBBC010 & 361920 & 201 & 7.04 & 0.368 & 0.368 \\
    cifar10 & 3072 & 50000 & 10.0 & 0.213 & 0.287 \\
    duke & 7129 & 44 & 38.0 & 0.962 & 0.962 \\
    e2006train & 150360 & 16087 & 5.35 & 0.173 & 0.173 \\
    gisette & 5000 & 6000 & 3.52 & 0.145 & 0.148 \\
    leu & 7129 & 38 & 32.8 & 0.960 & 0.960 \\
    mnist & 780 & 60000 & 19.1 & 0.268 & 0.443 \\
    news20 & 1355191 & 19996 & 42.1 & 0.378 & 0.378 \\
    rcv1 & 47236 & 20242 & 111 & 0.475 & 0.475 \\
    real-sim & 20958 & 72309 & 194 & 0.471 & 0.529 \\
    sonar & 60 & 208 & 6.26 & 0.344 & 0.448\\
    tmc2007 & 30438 & 21519 & 10.9 & 0.239 & 0.239 \\
    \bottomrule
\end{tabular}
\end{sc}
\end{small}
\end{center}
\vskip -0.1in
\end{table*}

In this section, we provide empirical evidence to support our claim about usefulness of the new convergence bounds obtained in our work. 
In particular, we conduct numerical evaluations to compare $\Hat{L}$ to the classical smoothness constant $L$ on synthetic datasets and on popular machine learning benchmark datasets. 
For a more streamlined comparison and to focus on the dependence on the data matrix, we assume that the loss functions $\ell_i$ all have the same smoothness constant, which leads to $L/\hat{L} = (\max_{1\leq i \leq n}\{\|\va_i\|^2\})/\big(\frac{1}{n^2}\|\sum_{j=1}^n\mI_{(j - 1)\uparrow} \mA_{\bar{\pi}}\mA_{\bar{\pi}}^{\top}\mI_{(j - 1)\uparrow}\|_2\big)$. Since the scale of the smoothness constant of the loss functions is irrelevant for the ratio $L/\hat{L}$ in this case, for simplicity, we take it to be equal to one. Note that assuming different smoothness constants over component loss functions would only make our bound better compared to related work (see Eq.~\eqref{eq:new-smoothness-constants} and the discussion following it).  

We implement the computation of $\Hat{L}$ and $L$ in \href{https://julialang.org}{Julia}, a high-performance scientific computation programming language, and compute matrix operator norms using the default settings in the Julia \texttt{Arpack} Package. However, limited by computational memory and time constraint, our selection of datasets is focused on moderately large-scale datasets of $n$ in the order of $O(10^5)$. We also include comparisons of small datasets such as \texttt{a1a} and \texttt{sonar}. Further results are provided in Appendix~\ref{appx:exp}.

We first illustrate the behavior of $L/\hat{L}$ on standard Gaussian data, where we 
fix
one of the parameters $n, d$ 
at 500 and we vary the other parameter from 1 to 1000, as shown in Fig.~\ref{fig:gaussian-compare}. The plots exhibit $\min\{n, d\}^\alpha$ growth for $L / \Hat{L}$, where $\alpha$ is around $0.3$ and $0.6$ for fixed $n$ and fixed $d,$ respectively. 

We also compare $\hat{L}$ and $L$ on 
a number of benchmarking datasets from LIBSVM~\citep{chang2011libsvm}, MNIST~\citep{deng2012mnist}, CIFAR10~\citep{Krizhevsky2009LearningML}, and Broad Bioimage Benchmark Collection~\citep{broadbioimage}. For each dataset, we generate a uniformly random permutation $\pi$ for the data matrix $\mA$ and compute $\Hat{L}_\pi$. We repeat this procedure $1000$ times for all datasets and display the average $L / \Hat{L}_\pi$ in Table~\ref{table:constant-compare-dataset}, except for \texttt{e2006train}, \texttt{CIFAR10}, \texttt{MNIST}, and $\texttt{BBBC005}$ where we do $20$ repetitions due to limitations of computation resources required for each calculation. 
We observe that among the datasets that we consider, which contain all three data matrix ``shapes'' 
$d >> n$, $d << n$, and $d \approx n$, our novel bound dependent on $\hat{L}$ is much tighter. 
For instance, for \texttt{rcv1} and \texttt{real-sim} datasets, where $d$ and $n$ are of the same order, we observe that $L / \hat{L}$ are approximately $111$ and $194$, respectively. 
For \texttt{news20} dataset where $d >> n$, $L / \hat{L} \approx 42.1$. For \texttt{MNIST}, where $d <<n,$ $L / \hat{L} \approx 19.1$. 

Finally, as a justification for using the empirical mean of $\Hat{L}_\pi$ over random permutations $\pi$ in the results displayed in Table \ref{table:constant-compare-dataset}, 
 we observe in our evaluations that the values of $L / \Hat{L}_\pi$  are fairly concentrated around their empirical mean values. Histogram plots showing the empirical distributions of $L / \Hat{L}_\pi$ for each of the datasets are provided in Appendix~\ref{appx:exp}.

\bibliography{ref}
\bibliographystyle{icml2024}

%%%%%%%%%%%%%%%%%%%%%%%%%%%%%%%%%%%%%%%%%%%%%%%%%%%%%%%%%%%%%%%%%%%%%%%%%%%%%%%
%%%%%%%%%%%%%%%%%%%%%%%%%%%%%%%%%%%%%%%%%%%%%%%%%%%%%%%%%%%%%%%%%%%%%%%%%%%%%%%
% APPENDIX
%%%%%%%%%%%%%%%%%%%%%%%%%%%%%%%%%%%%%%%%%%%%%%%%%%%%%%%%%%%%%%%%%%%%%%%%%%%%%%%
%%%%%%%%%%%%%%%%%%%%%%%%%%%%%%%%%%%%%%%%%%%%%%%%%%%%%%%%%%%%%%%%%%%%%%%%%%%%%%%
\newpage
\appendix
\onecolumn

\begin{center}{\LARGE\bfseries Supplementary Material}
\end{center}

\paragraph{Outline.} The appendix of the paper is organized as follows:
\begin{itemize}[leftmargin=*]
    \item Section~\ref{appx:smooth} presents the proofs related to the smooth convex setting from Section~\ref{sec:smooth}.
    \item Section~\ref{appx:nonsmooth} presents the proofs related to the nonsmooth convex Lipschitz setting from Section~\ref{sec:nonsmooth}.
    \item In Section~\ref{appx:general}, we present our refined analysis and results in a more general finite-sum setting considered in prior work where we only assume each component function $f_i$ to be convex and $L_i$-smooth.
    \item Section~\ref{appx:exp} presents the full details of the computational experiments performed in the paper.
\end{itemize}

\vspace{1em}

\section{Omitted Proofs From Section~\ref{sec:smooth}}\label{appx:smooth}
Before proceeding to the omitted proofs for the smooth convex settings, we first state the following standard definition and first-order characterization of strong convexity, for completeness. 
\begin{definition}
A function $f: \R^d \rightarrow \R$ is said to be $\mu$-strongly convex with parameter $\mu > 0$, if for any $\vx, \vy \in \R^d$ and any $\lambda \in (0, 1)$: 
\begin{equation*}
    f(\lambda\vx + (1 - \lambda)\vy) \leq \lambda f(\vx) + (1 - \lambda)f(\vy) - \frac{\mu}{2}\lambda(1 - \lambda)\|\vx - \vy\|_2^2.
\end{equation*}
\end{definition}
\begin{lemma}\label{lemma:strong-convex}
Let $f: \R^d \rightarrow \R$ be a continuous  $\mu$-strongly convex function with $\mu > 0$. Then, for any $\vx, \vy \in \R^d$: 
\begin{equation*}
    f(\vy) \geq f(\vx) + \innp{\vg_\vx, \vy - \vx} + \frac{\mu}{2}\|\vx - \vy\|^2_2, 
\end{equation*}
where $\vg_\vx \in \partial f(\vx)$, and $\partial f(\vx)$ is the subdifferential of $f$ at $\vx$.
\end{lemma}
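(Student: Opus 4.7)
The plan is to combine the definition of $\mu$-strong convexity with the standard subgradient inequality for convex functions, using a standard "divide by $\lambda$ and let $\lambda \to 0^+$" argument. Concretely, fix $\vx, \vy \in \R^d$ and $\lambda \in (0,1)$, and set $\vz_\lambda := (1-\lambda)\vx + \lambda \vy$, so that $\vz_\lambda - \vx = \lambda(\vy - \vx)$. Applying the strong convexity definition to the pair $(\vx, \vy)$ with convex combination parameter $\lambda$ yields
\begin{equation*}
    f(\vz_\lambda) \leq (1 - \lambda) f(\vx) + \lambda f(\vy) - \frac{\mu}{2} \lambda(1 - \lambda) \|\vx - \vy\|_2^2.
\end{equation*}

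Next, I would rearrange this to isolate $f(\vy)$, giving
\begin{equation*}
    f(\vy) \geq f(\vx) + \frac{f(\vz_\lambda) - f(\vx)}{\lambda} + \frac{\mu}{2}(1 - \lambda)\|\vx - \vy\|_2^2.
\end{equation*}
To handle the middle term, I would invoke the subgradient inequality for the convex function $f$ (convexity being implied by $\mu$-strong convexity): for any $\vg_\vx \in \partial f(\vx)$,
\begin{equation*}
    f(\vz_\lambda) \geq f(\vx) + \innp{\vg_\vx, \vz_\lambda - \vx} = f(\vx) + \lambda \innp{\vg_\vx, \vy - \vx}.
\end{equation*}
Dividing by $\lambda > 0$ then yields $\frac{f(\vz_\lambda) - f(\vx)}{\lambda} \geq \innp{\vg_\vx, \vy - \vx}$.

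Substituting this lower bound back into the rearranged strong convexity inequality gives
\begin{equation*}
    f(\vy) \geq f(\vx) + \innp{\vg_\vx, \vy - \vx} + \frac{\mu}{2}(1 - \lambda)\|\vx - \vy\|_2^2.
\end{equation*}
Since this holds for every $\lambda \in (0,1)$, letting $\lambda \to 0^+$ (or equivalently taking the supremum over $\lambda$) produces the claimed inequality. There is no real obstacle: this is a textbook result, and the only point worth checking is that $\partial f(\vx)$ is nonempty, which follows from the convexity of $f$ on all of $\R^d$ (so that $\vx$ lies in the relative interior of $\mathrm{dom}\, f$). The continuity assumption in the lemma statement is not needed for the argument above, as the inequality holds for each $\lambda$ before any limit is taken.
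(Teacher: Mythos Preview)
Your proof is correct and is the standard textbook argument. The paper, however, does not actually prove this lemma: it is stated ``for completeness'' as a standard first-order characterization of strong convexity, with no proof given. So there is nothing in the paper to compare your argument against, and your derivation stands on its own.
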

We also include the following lemma on the variance bound under without-replacement sampling, which is useful for our proof of Lemma~\ref{lemma:second-inner}.
\begin{lemma}\label{lem:batch-vr}
Let $\gB$ be the set of $|\gB| = b$ samples from $[n]$, drawn without replacement and uniformly at random. Then, $\forall \vx \in \R^d$,
\begin{equation*}
\E_{\mathcal{B}}\Big[\big\|\frac{1}{b}\sum_{i\in\gB}\nabla f_i(\vx) - \nabla f(\vx)\big\|_2^2\Big] = \frac{n-b}{b(n-1)}\E_i\big[ \| \nabla f_i(\vx) - \nabla f(\vx)\|_2^2 \big].
\end{equation*}
\end{lemma}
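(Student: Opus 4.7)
The plan is to reduce the lemma to a standard second-moment computation for a sample mean under simple random sampling without replacement. First, I would center the summands by setting $\vg_i := \nabla f_i(\vx) - \nabla f(\vx)$, so that $\sum_{i=1}^n \vg_i = \vzero$ by definition of $\nabla f(\vx) = \frac{1}{n}\sum_{i=1}^n \nabla f_i(\vx)$. The quantity to evaluate then rewrites as $\E_{\gB}\big[\big\|\frac{1}{b}\sum_{i \in \gB}\vg_i\big\|_2^2\big]$, and the right-hand side is $\frac{n-b}{b(n-1)}\cdot \frac{1}{n}\sum_{i=1}^n \|\vg_i\|_2^2$.

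Next, I would introduce indicator variables $\xi_i = \mathbbm{1}[i \in \gB]$, so that $\frac{1}{b}\sum_{i\in\gB}\vg_i = \frac{1}{b}\sum_{i=1}^n \xi_i \vg_i$, and compute the first two mixed moments of $\xi_i$ directly from the uniform-without-replacement model:
\begin{equation*}
\E[\xi_i] = \E[\xi_i^2] = \frac{b}{n}, \qquad \E[\xi_i \xi_j] = \frac{b(b-1)}{n(n-1)} \text{ for } i \neq j.
\end{equation*}
Expanding the squared norm then gives
\begin{equation*}
\E\Big[\Big\|\tfrac{1}{b}\textstyle\sum_{i}\xi_i \vg_i\Big\|_2^2\Big]
= \frac{1}{bn}\sum_{i=1}^n \|\vg_i\|_2^2 + \frac{b-1}{bn(n-1)} \sum_{i \neq j} \innp{\vg_i, \vg_j}.
\end{equation*}

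The key algebraic step is to use the centering identity $\sum_{i} \vg_i = \vzero$, which implies
\begin{equation*}
0 = \Big\|\textstyle\sum_i \vg_i\Big\|_2^2 = \sum_i \|\vg_i\|_2^2 + \sum_{i \neq j} \innp{\vg_i, \vg_j},
\end{equation*}
so that $\sum_{i \neq j} \innp{\vg_i, \vg_j} = -\sum_i \|\vg_i\|_2^2$. Substituting this back collapses the two terms into
\begin{equation*}
\Big(\frac{1}{bn} - \frac{b-1}{bn(n-1)}\Big) \sum_{i=1}^n \|\vg_i\|_2^2 = \frac{n-b}{bn(n-1)} \sum_{i=1}^n \|\vg_i\|_2^2 = \frac{n-b}{b(n-1)}\, \E_i[\|\vg_i\|_2^2],
\end{equation*}
which is exactly the stated identity.

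There is no serious obstacle here; the only care needed is in computing the second-order inclusion probabilities $\E[\xi_i\xi_j]$ correctly (they come from the fact that the number of $b$-subsets of $[n]$ containing a fixed pair is $\binom{n-2}{b-2}$, giving the ratio $\frac{b(b-1)}{n(n-1)}$). Everything else is routine algebra leveraging the centering identity, which is the one place where without-replacement sampling sharpens the bound over the with-replacement case by the factor $\frac{n-b}{n-1}$.
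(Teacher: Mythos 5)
Your proof is correct and follows essentially the same route as the paper's: expand the squared norm, use the pairwise inclusion probability $\frac{b(b-1)}{n(n-1)}$ for without-replacement sampling, and exploit the centering identity $\sum_i \big(\nabla f_i(\vx)-\nabla f(\vx)\big) = \vzero$ to eliminate the cross terms. The only cosmetic difference is that you carry indicator variables throughout and substitute $\sum_{i\neq j}\innp{\vg_i}{\vg_j} = -\sum_i\|\vg_i\|_2^2$, whereas the paper adds the diagonal back and notes the full double sum vanishes; the underlying computation is identical.
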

\begin{proof}
We first expand the square on the left-hand side, as follows
\begin{equation*}
    \begin{aligned}
        \;& \E_{\mathcal{B}}\Big[\big\|\frac{1}{b}\sum_{i\in\gB}\nabla f_i(\vx) - \nabla f(\vx)\big\|_2^2\Big] \\
        = \;& \frac{1}{b^2}\E_{\mathcal{B}}\Big[\sum_{i, i' \in \mathcal{B}}\innp{\nabla f_i(\vx) - \nabla f(\vx), \nabla f_{i'}(\vx) - \nabla f(\vx)}\Big] \\
        = \;& \frac{1}{b^2}\E_{\mathcal{B}}\Big[\sum_{i, i' \in \mathcal{B}, i \neq i'}\innp{\nabla f_i(\vx) - \nabla f(\vx), \nabla f_{i'}(\vx) - \nabla f(\vx)}\Big] + \frac{1}{b}\E_i\Big[\|\nabla f_i(\vx) - \nabla f(\vx)\|_2^2\Big]. 
    \end{aligned}
\end{equation*}
Since the batch $\mathcal{B}$ is sampled uniformly and without replacement from $[n]$, the probability that any pair $(i, i')$ from $[n]$ with $i \neq i'$ is in $\mathcal{B}$ is $\frac{b(b - 1)}{n(n - 1)}$. By the linearity of expectation, we have 
\begin{equation*}
    \begin{aligned}
        \;& \E_{\mathcal{B}}\Big[\sum_{i, i' \in \mathcal{B}, i \neq i'}\innp{\nabla f_i(\vx) - \nabla f(\vx), \nabla f_{i'}(\vx) - \nabla f(\vx)}\Big] \\
        = \;& \E_{\mathcal{B}}\Big[\sum_{i, i' \in [n], i\neq i'}\mathbbm{1}_{i, i' \in \mathcal{B}}\innp{\nabla f_i(\vx) - \nabla f(\vx), \nabla f_{i'}(\vx) - \nabla f(\vx)}\Big]\\
        = \;& \sum_{i, i' \in [n], i \neq i'}\E_{\mathcal{B}}\Big[\mathbbm{1}_{i,  i' \in \mathcal{B}}\innp{\nabla f_i(\vx) - \nabla f(\vx), \nabla f_{i'}(\vx) - \nabla f(\vx)}\Big]\\
        = \;& \frac{b(b - 1)}{n(n - 1)}\sum_{i,  i' \in [n], i \neq i'}\innp{\nabla f_i(\vx) - \nabla f(\vx), \nabla f_{i'}(\vx) - \nabla f(\vx)}, 
    \end{aligned}
\end{equation*}
where $\mathbbm{1}$ is the indicator function such that $\mathbbm{1}_{i, i' \in \mathcal{B}} = 1$ if both $i, i' \in \mathcal{B}$ and is equal to zero otherwise. 
Hence, we obtain 
\begin{equation*}
    \begin{aligned}
        \;& \E_{\mathcal{B}}\Big[\big\|\frac{1}{b}\sum_{i\in\gB}\nabla f_i(\vx) - \nabla f(\vx)\big\|^2\Big] \\
        = \;& \frac{b - 1}{bn(n - 1)}\sum_{i, i' \in [n], i \neq i'}\innp{\nabla f_i(\vx) - \nabla f(\vx), \nabla f_{i'}(\vx) - \nabla f(\vx)} + \frac{1}{b}\E_i\Big[\|\nabla f_i(\vx) - \nabla f(\vx)\|^2\Big]\\
        = \;& \frac{b - 1}{bn(n - 1)}\sum_{i, i' \in [n]}\innp{\nabla f_i(\vx) - \nabla f(\vx), \nabla f_{i'}(\vx) - \nabla f(\vx)} + \frac{n - b}{b(n - 1)}\E_i\Big[\|\nabla f_i(\vx) - \nabla f(\vx)\|^2\Big]\\
        \overset{(\romannumeral1)}{=} \;& \frac{n - b}{b(n - 1)}\E_i\Big[\|\nabla^j f_i(\vx) - \nabla^j f(\vx)\|^2\Big], 
    \end{aligned}
\end{equation*}
where $(\romannumeral1)$ is due to $f = \frac{1}{n}\sum_{i = 1}^n f_i$ having the finite sum structure. 
\end{proof}

\subsection{Omitted Proofs for the Random Reshuffling/Shuffle-Once Schemes}\label{appx:convex-smooth}
\gapBound*
\begin{proof}
By Line 6
% ~\ref{algo-line:dual-update} 
in Alg.~\ref{alg:PD-shuffled-sgd}, we have $\vy_k^{(i)} = \argmax_{\vy \in \R^b} \Big\{\vy^{\top}\mA_k^{(i)}\vx_{k - 1, i} - \sum_{j = 1}^b\ell_{\pi_{b(i - 1) + j}^{(k)}}^*(\vy^j)\Big\}$ for $i \in [m]$. Notice that since 
\begin{equation*}
    \vy^{\top}\mA_k^{(i)}\vx_{k - 1, i} - \sum_{j = 1}^b\ell_{\pi_{b(i - 1) + j}^{(k)}}^*(\vy^j) = \sum_{j = 1}^b\Big(\vy^j\va_{\pi_{b(i - 1) + j}^{(k)}}^\top\vx_{k - 1, i} - \ell_{\pi_{b(i - 1) + j}^{(k)}}^*(\vy^j)\Big)
\end{equation*}
is separable, we have $\vy_k^j = \argmax_{y \in \R}\{y\va_{\pi_{j}^{(k)}}^\top\vx_{k - 1, i} - \ell_{\pi_{j}^{(k)}}^*(y)\}$ for $b(i - 1) + 1 \leq j \leq bi$, thus $\va_{\pi_{j}^{(k)}}^\top\vx_{k - 1, i} \in \partial \ell_{\pi_{j}^{(k)}}^*(\vy_k^j)$. Since $\ell_i^*$ is $\frac{1}{L_i}$-strongly convex by Assumption~\ref{assp:smooth}, then by Lemma~\ref{lemma:strong-convex} we obtain for $b(i - 1) + 1 \leq j \leq bi$
\begin{equation*}
    \ell_{\pi_{j}^{(k)}}^*\big(\vv_k^j\big) \geq \ell_{\pi_{j}^{(k)}}^*(\vy_k^j) + \va_{\pi_{j}^{(k)}}^\top\vx_{k - 1, i}\big(\vv_k^j - \vy_k^j\big) + \frac{1}{2L_{\pi_j^{(k)}}}\big(\vv_k^j - \vy_k^j\big)^2,
\end{equation*}
which leads to 
\begin{align}
    \gL(\vx_{k}, \vv) = \;& \frac{1}{n}\sum_{i=1}^m \Big(\vv^{(i) \top}_k\mA_k^{(i)}\vx_{k - 1, i} - \sum_{j = b(i - 1) + 1}^{bi}\ell_{\pi^{(k)}_{j}}^*(\vv^{j}_k)\Big) + \frac{1}{n}\sum_{i=1}^m \vv^{(i) \top}_k\mA_k^{(i)}(\vx_{k} - \vx_{k - 1, i}) \notag\\
    \leq \;& \frac{1}{n}\sum_{i=1}^m \Big(\vy^{(i) \top}_k\mA_k^{(i)}\vx_{k - 1, i} - \sum_{j = b(i - 1) + 1}^{bi}\ell_{\pi^{(k)}_{j}}^*(\vy^{j}_k)\Big) + \frac{1}{n}\sum_{i=1}^m \vv^{(i) \top}_k\mA_k^{(i)}(\vx_{k} - \vx_{k - 1, i}) \notag \\
    & - \frac{1}{2n}\|\vy_k - \vv_k\|_{\mLambda_k^{-1}}^2 . \label{eq:ub-perm-batch}
\end{align}
Using the same argument for $\gL(\vx_*, \vy_*)$ as $\va_j^\top\vx_* \in \partial \ell_j^*(\vy_*^j)$ for $j \in [n]$, we have 
\begin{align}
    \gL(\vx_*, \vy_*) 
    = \;& \frac{1}{n}\sum_{i = 1}^m \Big(\vy_{*, k}^{(i) \top}\mA_k^{(i)}\vx_* - \sum_{j = b(i - 1) + 1}^{bi}\ell^*_{\pi_j^{(k)}}(\vy_{*, k}^j)\Big) \notag \\
    \geq \;& \frac{1}{n}\sum_{i = 1}^m\Big(\vy_k^{(i) \top}\mA_k^{(i)}\vx_* - \sum_{j = b(i - 1) + 1}^{bi}\ell^*_{\pi_j^{(k)}}(\vy_{k}^j)\Big) + \frac{1}{2n}\|\vy_k - \vy_{*, k}\|_{\mLambda_k^{-1}}^2. \label{eq:lb-perm-batch-1}
\end{align}
Adding and substracting the term $\frac{b}{2n\eta_k}\sum_{i = 1}^m\|\vx_* - \vx_{k - 1, i}\|_2^2$ on the R.H.S.\ of Eq.~\eqref{eq:lb-perm-batch-1}, we obtain 
\begin{equation*}
\begin{aligned}
    \gL(\vx_*, \vy_*) \geq \;& \frac{1}{n}\sum_{i = 1}^m\Big(\vy_k^{(i) \top}\mA_k^{(i)}\vx_* + \frac{b}{2\eta_k}\|\vx_* - \vx_{k - 1, i}\|_2^2 - \sum_{j = b(i - 1) + 1}^{bi}\ell^*_{\pi_j^{(k)}}(\vy_{k}^j)\Big) \\
    & - \frac{b}{2n\eta_k}\sum_{i = 1}^m\|\vx_* - \vx_{k - 1, i}\|_2^2 + \frac{1}{2n}\|\vy_k - \vy_{*, k}\|_{\mLambda_k^{-1}}^2.
\end{aligned}
\end{equation*}
By Line 7
% ~\ref{algo-line:primal-update} 
of Alg.~\ref{alg:PD-shuffled-sgd}, we have $\vx_{k - 1, i + 1} = \argmin_{\vx \in \R^d}\Big\{\vy_k^{(i)\top}\mA_k^{(i)}\vx + \frac{b}{2\eta_k}\|\vx - \vx_{k - 1, i}\|_2^2\Big\}$. Further noticing that $\phi_k^{(i)}(\vx) := \vy_k^{(i)\top}\mA_k^{(i)}\vx + \frac{b}{2\eta_k}\|\vx - \vx_{k - 1, i}\|_2^2$ is $\frac{b}{\eta_k}$-strongly convex w.r.t.\ $\vx$ and $\nabla \phi_k^{(i)}(\vx_{k - 1, i + 1}) = \vzero$, we have 
\begin{equation*}
    \phi_k^{(i)}(\vx_*) \geq \phi_k^{(i)}(\vx_{k - 1, i + 1}) + \frac{b}{2\eta_k}\|\vx_* - \vx_{k - 1, i + 1}\|^2_2, 
\end{equation*}
which leads to 
\begin{align}
    \gL(\vx_*, \vy_*) \geq \;& \frac{1}{n}\sum_{i = 1}^m\Big(\vy_k^{(i) \top}\mA_k^{(i)}\vx_{k - 1, i + 1} + \frac{b}{2\eta_k}\|\vx_{k - 1, i + 1} - \vx_{k - 1, i}\|^2_2 - \sum_{j = b(i - 1) + 1}^{bi}\ell^*_{\pi_j^{(k)}}(\vy_{k}^j)\Big) \notag \\
    & + \frac{b}{2n\eta_k}\sum_{i = 1}^m\big(\|\vx_* - \vx_{k - 1, i + 1}\|^2_2 - \|\vx_* - \vx_{k - 1, i}\|_2^2\big) + \frac{1}{2n}\|\vy_k - \vy_{*, k}\|_{\mLambda_k^{-1}}^2 \notag \\
    \overset{(\romannumeral1)}{=} \;& \frac{1}{n}\sum_{i = 1}^m\Big(\vy_k^{(i) \top}\mA_k^{(i)}\vx_{k - 1, i + 1} + \frac{b}{2\eta_k}\|\vx_{k - 1, i + 1} - \vx_{k - 1, i}\|^2_2 - \sum_{j = b(i - 1) + 1}^{bi}\ell^*_{\pi_j^{(k)}}(\vy_{k}^j)\Big) \notag \\
    & + \frac{b}{2n\eta_k}\|\vx_{k} - \vx_*\|_2^2 - \frac{b}{2n\eta_k}\|\vx_{k - 1} - \vx_*\|_2^2 + \frac{1}{2n}\|\vy_k - \vy_{*, k}\|_{\mLambda_k^{-1}}^2, \label{eq:lb-perm-batch}
\end{align}
where we telescope from $i = 1$ to $m$ for the term $\sum_{i = 1}^m\big(\|\vx_* - \vx_{k - 1, i + 1}\|^2_2 - \|\vx_* - \vx_{k - 1, i}\|_2^2\big)$, and use the definitions that $\vx_k = \vx_{k - 1, m + 1}$ and $\vx_{k - 1} = \vx_{k - 1, 1}$ for $(\romannumeral1)$.

Combining the bounds from Eq.~\eqref{eq:ub-perm-batch}~and~Eq.~\eqref{eq:lb-perm-batch}  and denoting 
\begin{equation*}
    \gE_k := \eta_k \big(\gL(\vx_{k}, \vv) - \gL(\vx_*, \vy_*)\big) + \frac{b}{2n}\|\vx_* - \vx_{k}\|_2^2 - \frac{b}{2n} \|\vx_* - \vx_{k - 1}\|_2^2, 
\end{equation*}
we obtain 
\begin{equation*}
\begin{aligned}
    \gE_k \leq \;& \frac{\eta_k}{n}\sum_{i=1}^m \vy_k^{(i) \top}\mA_k^{(i)}(\vx_{k - 1, i} - \vx_{k - 1, i + 1}) + \frac{\eta_k}{n}\sum_{i=1}^m \vv^{(i) \top}_k\mA_k^{(i)}(\vx_{k} - \vx_{k - 1, i}) \\
    &- \frac{\eta_k}{2n}\|\vy_k - \vv_k\|_{\mLambda_k^{-1}}^2 - \frac{\eta_k}{2n}\|\vy_k - \vy_{*, k}\|_{\mLambda_k^{-1}}^2 - \frac{b}{2n}\sum_{i=1}^m \|\vx_{k - 1, i} - \vx_{k - 1, i+1}\|_2^2 \\
    = \;& \frac{\eta_k}{n}\sum_{i=1}^m \vy_k^{(i) \top}\mA_k^{(i)}(\vx_{k} - \vx_{k - 1, i + 1}) + \frac{\eta_k}{n}\sum_{i=1}^m (\vv^{(i)}_k - \vy_k^{(i)})^{\top}\mA_k^{(i)}(\vx_{k} - \vx_{k - 1, i}) \\
    &- \frac{\eta_k}{2n}\|\vy_k - \vv_k\|_{\mLambda_k^{-1}}^2 - \frac{\eta_k}{2n}\|\vy_k - \vy_{*, k}\|_{\mLambda_k^{-1}}^2 - \frac{b}{2n}\sum_{i=1}^m \|\vx_{k - 1, i} - \vx_{k - 1, i+1}\|_2^2, 
\end{aligned}
\end{equation*}
thus completing the proof. 
\end{proof}

\firstInner*
\begin{proof}
By Line 7
% ~\ref{algo-line:primal-update}
in Alg.~\ref{alg:PD-shuffled-sgd}, we have $\mA_k^{(i) \top}\vy_k^{(i)} = \frac{b}{\eta_k}(\vx_{k - 1, i} - \vx_{k - 1, i + 1})$. 
Further noticing that $\vx_{k} - \vx_{k - 1, i + 1} = -\sum_{j = i + 1}^m(\vx_{k - 1, j} - \vx_{k - 1, j + 1})$, we obtain 
\begin{equation*}
\begin{aligned}
    \gT_1 := \;& \frac{\eta_k}{n}\sum_{i=1}^m \vy_k^{(i) \top}\mA_k^{(i)}(\vx_{k} - \vx_{k - 1, i + 1}) \\
    = \;& -\frac{b}{n}\sum_{i = 1}^{m - 1}\sum_{j = i + 1}^m\innp{\vx_{k - 1, i} - \vx_{k - 1, i + 1}, \vx_{k - 1, j} - \vx_{k - 1, j + 1}} \\
    = \;& \frac{b}{2n}\sum_{i = 1}^m\|\vx_{k - 1, i} - \vx_{k - 1, i + 1}\|^2 - \frac{b}{2n}\Big\|\sum_{i = 1}^m (\vx_{k - 1, i} - \vx_{k - 1, i + 1})\Big\|^2, 
\end{aligned}
\end{equation*}
thus completing the proof.
\end{proof}

\secondInner*
\begin{proof}
By Line 7
% ~\ref{algo-line:primal-update} 
in Alg.~\ref{alg:PD-shuffled-sgd}, we have $\vx_{k - 1, i} - \vx_{k - 1, i + 1} = \frac{\eta_k}{b}\mA_k^{(i) \top}\vy_k^{(i)}$. Using the definition of $\mI_{j\uparrow}$ for $0 \leq j \leq n - 1$ as in Section~\ref{sec:prelim}, we obtain 
\begin{equation*}
    \vx_{k} - \vx_{k - 1, i} = -\sum_{j = i}^m(\vx_{k - 1, j} - \vx_{k - 1, j + 1}) = -\frac{\eta_k}{b}\sum_{j = i}^m \mA_k^{(j) \top}\vy_k^{(j)} = -\frac{\eta_k}{b}\mA_k\mI_{b(i - 1)\uparrow}\vy_k.
\end{equation*}
Also, we have $\mA_k^{(i)\top}(\vv_k^{(i)} - \vy_k^{(i)}) = \mA_k\mI_{(i)}(\vv_k - \vy_k)$ by the definition of $\mI_{(i)}$ in Section~\ref{sec:smooth}. Combining these two observations, we have 
\begin{align}
    \gT_2 := \;& \frac{\eta_k}{n}\sum_{i = 1}^m \big(\vv^{(i)}_k - \vy_k^{(i)}\big)^{\top}\mA_k^{(i)}(\vx_{k} - \vx_{k - 1, i}) \notag \\
    = \;& -\frac{\eta_k^2}{bn}\sum_{i = 1}^m\innp{\mA_k^{\top}\mI_{b(i - 1)\uparrow}\vy_k, \mA_k^{\top}\mI_{(i)}(\vv_k - \vy_k)} \notag \\
    \overset{(\romannumeral1)}{=} \;& -\frac{\eta_k^2}{bn}\sum_{i = 1}^m\innp{\mA_k^{\top}\mI_{b(i - 1)\uparrow}(\vy_k - \vy_{*, k}), \mA_k^{\top}\mI_{(i)}(\vv_k - \vy_k)} \label{eq:inn-prod-2-fin-bnd-1} \\
    & -\frac{\eta_k^2}{bn}\sum_{i = 1}^m\innp{\mA_k^{\top}\mI_{b(i - 1)\uparrow}\vy_{*, k}, \mA_k^{\top}\mI_{(i)}(\vv_k - \vy_k)}, \label{eq:inn-prod-2-fin-bnd-2}
\end{align}
where we make a decomposition w.r.t.\ $\vy_{*, k}$ in~$(\romannumeral1)$.
For the first term in Eq.~\eqref{eq:inn-prod-2-fin-bnd-1}, we use Young's inequality for $\alpha > 0$ and have 
\begin{equation}\label{eq:inn-prod-2-bnd-1}
\begin{aligned}
    \;& -\frac{\eta_k^2}{bn}\sum_{i = 1}^m\innp{\mA_k^{\top}\mI_{b(i - 1)\uparrow}(\vy_k - \vy_{*, k}), \mA_k^{\top}\mI_{(i)}(\vv_k - \vy_k)} \\
    \leq \;& \frac{\eta_k^2\alpha}{2bn}\sum_{i = 1}^m\|\mA_k^{\top}\mI_{b(i - 1)\uparrow}(\vy_k - \vy_{*, k})\|_2^2 + \frac{\eta_k^2}{2bn\alpha}\sum_{i = 1}^m\|\mA_k^{\top}\mI_{(i)}(\vv_k - \vy_k)\|_2^2.
\end{aligned}
\end{equation}
Expanding the squares and rearranging the terms in Eq.~\eqref{eq:inn-prod-2-bnd-1}, we have 
\begin{equation}\label{eq:cyclic}
\begin{aligned}
    \;& \frac{\eta_k^2\alpha}{2bn}\sum_{i = 1}^m\|\mA_k^{\top}\mI_{b(i - 1)\uparrow}(\vy_k - \vy_{*, k})\|_2^2 \\
    = \;& \frac{\eta_k^2\alpha}{2bn}\sum_{i = 1}^m(\vy_k - \vy_{*, k})^\top\mI_{b(i - 1)\uparrow}\mA_k\mA_k^{\top}\mI_{b(i - 1)\uparrow}(\vy_k - \vy_{*, k}) \\
    = \;& \frac{\eta_k^2\alpha}{2bn}(\vy_k - \vy_{*, k})^\top\Big(\sum_{i = 1}^m\mI_{b(i - 1)\uparrow}\mA_k\mA_k^{\top}\mI_{b(i - 1)\uparrow}\Big)(\vy_k - \vy_{*, k}) \\
    = \;& \frac{\eta_k^2\alpha}{2bn}(\vy_k - \vy_{*, k})^\top\mLambda_k^{-1/2}\mLambda_k^{1/2}\Big(\sum_{i = 1}^m\mI_{b(i - 1)\uparrow}\mA_k\mA_k^{\top}\mI_{b(i - 1)\uparrow}\Big)\mLambda_k^{1/2}\mLambda_k^{-1/2}(\vy_k - \vy_{*, k}) \\
    \overset{(\romannumeral1)}{\leq} \;& \frac{\eta_k^2\alpha}{2bn}\Big\|\mLambda_k^{1/2}\Big(\sum_{i = 1}^m\mI_{b(i - 1)\uparrow}\mA_k\mA_k^{\top}\mI_{b(i - 1)\uparrow}\Big)\mLambda_k^{1/2}\Big\|_2\|\vy_k - \vy_{*, k}\|^2_{\mLambda_k^{-1}}, 
\end{aligned}
\end{equation}
where we use Cauchy-Schwarz inequality for $(\romannumeral1)$. Using a similar argument, we also have 
\begin{equation*}
    \frac{\eta_k^2}{2bn\alpha}\sum_{i = 1}^m\|\mA_k^{\top}\mI_{(i)}(\vv_k - \vy_k)\|_2^2 \leq \frac{\eta_k^2}{2bn\alpha}\Big\|\mLambda_k^{1/2}\Big(\sum_{i = 1}^m\mI_{(i)}\mA_k\mA_k^{\top}\mI_{(i)}\Big)\mLambda_k^{1/2}\Big\|_2\|\vv_k - \vy_{k}\|^2_{\mLambda_k^{-1}}.
\end{equation*}
By the definitions of $\Hat{L}_{\pi^{(k)}}$ and $\Tilde{L}_{\pi^{(k)}}$, and choosing $\alpha = 2\eta_k\Tilde{L}_{\pi^{(k)}}$ in Eq.~\eqref{eq:inn-prod-2-bnd-1}, we obtain  
\begin{equation}\label{eq:inn-prod-2-fin-1}
\begin{aligned}
    \;& -\frac{\eta_k^2}{bn}\sum_{i = 1}^m\innp{\mA_k^{\top}\mI_{b(i - 1)\uparrow}(\vy_k - \vy_{*, k}), \mA_k^{\top}\mI_{(i)}(\vv_k - \vy_k)} \\
    \leq \;& \frac{\eta_k^3 n \Hat{L}_{\pi^{(k)}}\Tilde{L}_{\pi^{(k)}}}{b^2}\|\vy_k - \vy_{*, k}\|^2_{\mLambda_k^{-1}} + \frac{\eta_k}{4n}\|\vv_k - \vy_{k}\|_{\mLambda_k^{-1}}^2.
\end{aligned}
\end{equation}
For the second term in Eq.~\eqref{eq:inn-prod-2-fin-bnd-2}, we apply Young's inequality with $\beta > 0$ and proceed as above: 
\begin{equation*}
\begin{aligned}
    \;& -\frac{\eta_k^2}{bn}\sum_{i = 1}^m\innp{\mA_k^{\top}\mI_{b(i - 1)\uparrow}\vy_{*, k}, \mA_k^{\top}\mI_{(i)}(\vv_k - \vy_k)} \\
    \leq \;& \frac{\eta_k^2\beta}{2bn}\sum_{i = 1}^m\|\mA_k^{\top}\mI_{b(i - 1)\uparrow}\vy_{*, k}\|_2^2 + \frac{\eta_k^2}{2bn\beta}\sum_{i = 1}^m\|\mA_k^{\top}\mI_{(i)}(\vv_k - \vy_k)\|_2^2 \\
    \leq \;& \frac{\eta_k^2\beta}{2bn}\sum_{i = 1}^m\|\mA_k^{\top}\mI_{b(i - 1)\uparrow}\vy_{*, k}\|_2^2 + \frac{\eta_k^2}{2n\beta}\Tilde{L}_{\pi^{(k)}}\|\vv_k - \vy_k\|^2_{\mLambda_k^{-1}}.
\end{aligned}
\end{equation*}
Noticing that $\Tilde{L}_{\pi^{(k)}} \leq \Tilde{L}$, we choose $\beta = 2\eta_k\Tilde{L}$ and obtain 
\begin{equation}\label{eq:inn-prod-2-fin-2}
\begin{aligned}
    \;& -\frac{\eta_k^2}{bn}\sum_{i = 1}^m\innp{\mA_k^{\top}\mI_{b(i - 1)\uparrow}\vy_{*, k}, \mA_k^{\top}\mI_{(i)}(\vv_k - \vy_k)} \\
    \leq \;& \frac{\eta_k^3\Tilde{L}}{nb}\sum_{i = 1}^m\|\mA_k^{\top}\mI_{b(i - 1)\uparrow}\vy_{*, k}\|_2^2 + \frac{\eta_k}{4n}\|\vv_k - \vy_k\|^2_{\mLambda_k^{-1}}. 
\end{aligned}
\end{equation}
Combining Eq.~\eqref{eq:inn-prod-2-fin-1} and Eq.~\eqref{eq:inn-prod-2-fin-2}, we have 
\begin{equation}\label{eq:inn-prod-2-bnd}
\gT_2 \leq \frac{\eta_k^3\Tilde{L}}{nb}\sum_{i = 1}^m\|\mA_k^{\top}\mI_{b(i - 1)\uparrow}\vy_{*, k}\|_2^2 + \frac{\eta_k^3 n \Hat{L}_{\pi^{(k)}}\Tilde{L}_{\pi^{(k)}}}{b^2}\|\vy_k - \vy_{*, k}\|^2_{\mLambda_k^{-1}} + \frac{\eta_k}{2n}\|\vv_k - \vy_{k}\|_{\mLambda_k^{-1}}^2.
\end{equation}
We first assume the RR scheme. Taking conditional expectation w.r.t.\ the randomness up to but not including $k$-th epoch, we have 
\begin{equation*}
\E_k[\gT_2] \leq \frac{\eta_k^3\Tilde{L}}{nb}\E_k\Big[\sum_{i = 1}^m\|\mA_k^{\top}\mI_{b(i - 1)\uparrow}\vy_{*, k}\|_2^2\Big] + \E_k\Big[\frac{\eta_k^3 n \Hat{L}_{\pi^{(k)}}\Tilde{L}_{\pi^{(k)}}}{b^2}\|\vy_k - \vy_{*, k}\|^2_{\mLambda_k^{-1}} + \frac{\eta_k}{2n}\|\vv_k - \vy_{k}\|_{\mLambda_k^{-1}}^2\Big].
\end{equation*}
For the first term $\frac{\eta_k^3\Tilde{L}}{nb}\E_k\Big[\sum_{i = 1}^m\|\mA_k^{\top}\mI_{b(i - 1)\uparrow}\vy_{*, k}\|_2^2\Big]$, the only randomness is from the random permutation $\pi^{(k)}$. In this case, each term $\mA_k^{\top}\mI_{b(i - 1)\uparrow}\vy_{*, k}$ can be considered as a sum of a batch sampled without replacement from $\{\vy_*^j\va_j\}_{j \in [n]}$, while $\sum_{j = 1}^n\vy_*^j\va_j = 0$ as $\vx_*$ is the minimizer, we then can use Lemma~\ref{lem:batch-vr} and obtain 
\begin{equation*}
\begin{aligned}
    \frac{\eta_k^3\Tilde{L}}{nb}\E_{k}\Big[\sum_{i = 1}^m\|\mA_k^{\top}\mI_{b(i - 1)\uparrow}\vy_{*, k}\|_2^2\Big] \overset{(\romannumeral1)}{=} \;& \frac{\eta_k^3\Tilde{L}}{nb}\sum_{i = 1}^m\E_{\pi^{(k)}}[\|\mA_k^{\top}\mI_{b(i - 1)\uparrow}\vy_{*, k}\|_2^2] \\
    = \;& \frac{\eta_k^3\Tilde{L}}{nb}\sum_{i = 1}^m\big(n - b(i - 1)\big)^2\E_{\pi^{(k)}}\Big[\Big\|\frac{\mA_k^{\top}\mI_{b(i - 1)\uparrow}\vy_{*, k}}{n - b(i - 1)}\Big\|_2^2\Big] \\
    \overset{(\romannumeral2)}{\leq} \;& \frac{\eta_k^3\Tilde{L}}{nb}\sum_{i = 1}^m\big(n - b(i - 1)\big)^2\frac{b(i - 1)}{\big(n - b(i - 1)\big)(n - 1)}\sigma_*^2 \\
    = \;& \frac{\eta_k^3\Tilde{L}(n - b)(n + b)}{6b^2(n - 1)}\sigma_*^2,
\end{aligned}
\end{equation*}
where $(\romannumeral1)$ is due to the linearity of expectation, and we use our definition $\sigma_*^2 = \frac{1}{n}\sum_{j = 1}^n(\vy_*^j)^2\|\va_j\|_2^2 = \E_j\big[\|\vy_*^j\va_j\|^2_2\big]$ for $(\romannumeral2)$. 
Taking expectation w.r.t.\ all the randomness on both sides and using the law of total expectation, we obtain 
\begin{equation*}
\begin{aligned}
    \E[\gT_2] \leq \E\Big[\frac{\eta_k^3 n \Hat{L}_{\pi^{(k)}}\Tilde{L}_{\pi^{(k)}}}{b^2}\|\vy_k - \vy_{*, k}\|^2_{\mLambda_k^{-1}} + \frac{\eta_k}{2n}\|\vv_k - \vy_{k}\|_{\mLambda_k^{-1}}^2\Big] + \frac{\eta_k^3\Tilde{L}(n - b)(n + b)}{6b^2(n - 1)}\sigma_*^2.
\end{aligned}
\end{equation*}
For the SO scheme, since there is only one random permutation generated at the very beginning, we can take expectation w.r.t.\ all the randomness on both sides of~\eqref{eq:inn-prod-2-bnd}, and the randomness for the term $\frac{\eta_k^3\Tilde{L}}{nb}\E\Big[\sum_{i = 1}^m\|\mA_k^{\top}\mI_{b(i - 1)\uparrow}\vy_{*, k}\|_2^2\Big]$ is only from the initial random permutation. So the above argument still applies to this case, and we complete the proof.
\end{proof}

\convergence*
\begin{proof}
Combining the bounds in Lemma~\ref{lemma:first-inner}~and~\ref{lemma:second-inner} and plugging them into Eq.~\eqref{eq:gap-bound}, we obtain 
\begin{equation*}
    \E[\gE_k] \leq \E\Big[\Big(\frac{\eta_k^3 n \Hat{L}_{\pi^{(k)}}\Tilde{L}_{\pi^{(k)}}}{b^2} - \frac{\eta_k}{2n}\Big)\|\vy_k - \vy_{*, k}\|^2_{\mLambda_k^{-1}}\Big] + \frac{\eta_k^3\Tilde{L}(n - b)(n + b)}{6b^2(n - 1)}\sigma_*^2.
\end{equation*}
For the stepsize $\eta_k$ such that $\eta_k \leq \frac{b}{n\sqrt{2\Hat{L}_{\pi^{(k)}}\Tilde{L}_{\pi^{(k)}}}}$, we have $\frac{\eta_k^3 n \Hat{L}_{\pi^{(k)}}\Tilde{L}_{\pi^{(k)}}}{b^2} - \frac{\eta_k}{2n} \leq 0$, thus 
\begin{equation*}
    \E[\gE_k] \leq \frac{\eta_k^3\Tilde{L}(n - b)(n + b)}{6b^2(n - 1)}\sigma_*^2. 
\end{equation*}
Noticing that $\gE_k = \eta_k\text{Gap}^\vv(\vx_{k}, \vy_*) + \frac{b}{2n}\|\vx_* - \vx_{k}\|_2^2 - \frac{b}{2n} \|\vx_* - \vx_{k - 1}\|_2^2$ and telescoping from $k = 1$ to $K$, we have 
\begin{equation*}
    \E\Big[\sum_{k = 1}^K\eta_k\text{Gap}^\vv(\vx_{k}, \vy_*)\Big] \leq \frac{b}{2n}\|\vx_* - \vx_0\|_2^2 - \frac{b}{2n}\E[\|\vx_* - \vx_{K}\|_2^2] + \sum_{k = 1}^K\frac{\eta_k^3\Tilde{L}(n - b)(n + b)}{6b^2(n - 1)}\sigma_*^2.
\end{equation*}
Noticing that $\gL(\vx, \vv)$ is convex w.r.t.\ $\vx$, we have $\text{Gap}^\vv(\Hat\vx_{K}, \vy_*) \leq \sum_{k = 1}^K\eta_k\text{Gap}^\vv(\vx_{k}, \vy_*) / H_K$, where $\Hat{\vx}_K = \sum_{k=1}^K \eta_k \vx_{k} / H_K$ and $H_K = \sum_{k = 1}^K\eta_k$, which leads to  
\begin{equation*}
    \E\Big[H_K\text{Gap}^\vv(\Hat\vx_{K}, \vy_*)\Big] \leq \frac{b}{2n}\|\vx_0 - \vx_*\|_2^2 + \sum_{k=1}^K \frac{\eta_k^3\Tilde{L}(n - b)(n + b)}{6b^2(n - 1)}\sigma_*^2. 
\end{equation*}
Further choosing $\vv = \vy_{\Hat{\vx}_K}$, we obtain 
\begin{equation}\label{eq:final-bound}
    \E[H_K\big(f(\Hat{\vx}_K) - f(\vx_*)\big)] \leq \frac{b}{2n}\|\vx_0 - \vx_*\|_2^2 + \sum_{k=1}^K\frac{\eta_k^3\Tilde{L}(n - b)(n + b)}{6b^2(n - 1)}\sigma_*^2.
\end{equation}
To analyze the individual gradient oracle complexity, we choose constant stepsizes $\eta \leq \frac{b}{n\sqrt{2\Hat{L}\Tilde{L}}}$, then Eq.~\eqref{eq:final-bound} will become 
\begin{equation*}
\begin{aligned}
    \E[f(\Hat{\vx}_K) - f(\vx_*)] \leq \frac{b}{2n\eta K}\|\vx_0 - \vx_*\|_2^2 + \frac{\eta^2\Tilde{L}(n - b)(n + b)}{6b^2(n - 1)}\sigma_*^2.
\end{aligned}
\end{equation*}
Without loss of generality, we assume that $b \neq n$, otherwise the method and its analysis reduce to (full) gradient descent. We consider the following two cases: 
\begin{itemize}[leftmargin=*]
    \item ``Small $K$'' case: if $\eta = \frac{b}{n\sqrt{2\Hat{L}\Tilde{L}}} \leq \Big(\frac{3b^3(n - 1)\|\vx_0 - \vx_*\|^2_2}{n(n - b)(n + b)\Tilde{L}K\sigma_*^2}\Big)^{1/3}$, we have 
    \begin{equation*}
    \begin{aligned}
        \E[f(\Hat{\vx}_K) - f(\vx_*)] \leq \;& \frac{b}{2n\eta K}\|\vx_0 - \vx_*\|_2^2 + \frac{\eta^2\Tilde{L}(n - b)(n + b)}{6b^2(n - 1)}\sigma_*^2\\
        \leq \;& \frac{\sqrt{\Hat{L}\Tilde{L}}}{\sqrt{2}K}\|\vx_0 - \vx_*\|_2^2 + \frac{1}{2}\Big(\frac{(n - b)(n + b)}{n^2(n - 1)}\Big)^{1/3}\frac{\Tilde{L}^{1/3}\sigma_*^{2/3}\|\vx_0 - \vx_*\|^{4/3}_2}{3^{1/3}K^{2/3}}.
    \end{aligned}
    \end{equation*}
    \item ``Large $K$'' case:
    if $\eta = \Big(\frac{3b^3(n - 1)\|\vx_0 - \vx_*\|^2_2}{n(n - b)(n + b)\Tilde{L}K\sigma_*^2}\Big)^{1/3} \leq \frac{b}{n\sqrt{2\Hat{L}\Tilde{L}}}$, we have 
    \begin{equation*}
    \begin{aligned}
        \E[f(\Hat{\vx}_K) - f(\vx_*)] \leq \;& \frac{b}{2n\eta K}\|\vx_0 - \vx_*\|_2^2 + \frac{\eta^2\Tilde{L}(n - b)(n + b)}{6b^2(n - 1)}\sigma_*^2 \\
        \leq \;& \Big(\frac{(n - b)(n + b)}{n^2(n - 1)}\Big)^{1/3}\frac{\Tilde{L}^{1/3}\sigma_*^{2/3}\|\vx_0 - \vx_*\|^{4/3}_2}{3^{1/3}K^{2/3}}. 
    \end{aligned}
    \end{equation*}
\end{itemize}
Combining these two cases by setting $\eta = \min\Big\{\frac{b}{n\sqrt{2\Hat{L}\Tilde{L}}} ,\, \Big(\frac{3b^3(n - 1)\|\vx_0 - \vx_*\|^2_2}{n(n - b)(n + b)\Tilde{L}K\sigma_*^2}\Big)^{1/3}\Big\}$, we obtain  
\begin{equation*}
    \E[f(\Hat{\vx}_K) - f(\vx_*)] \leq \frac{\sqrt{\Hat{L}\Tilde{L}}}{\sqrt{2}K}\|\vx_0 - \vx_*\|_2^2 + \Big(\frac{(n - b)(n + b)}{n^2(n - 1)}\Big)^{1/3}\frac{\Tilde{L}^{1/3}\sigma_*^{2/3}\|\vx_0 - \vx_*\|^{4/3}_2}{3^{1/3}K^{2/3}}.
\end{equation*}
Hence, to guarantee $\E[f(\Hat{\vx}_K) - f(\vx_*)] \leq \epsilon$ for $\epsilon > 0$, the total number of individual gradient evaluations will be 
\begin{equation*}
    nK \geq \max\Big\{\frac{n\sqrt{2\Hat{L}\Tilde{L}}\|\vx_0 - \vx_*\|_2^2}{\epsilon}, \Big(\frac{(n - b)(n + b)}{n - 1}\Big)^{1/2}\frac{2^{3/2}\Tilde{L}^{1/2}\sigma_*\|\vx_0 - \vx_*\|_2^2}{3^{1/2}\epsilon^{3/2}}\Big\}, 
\end{equation*}
as claimed.  
\end{proof}

\subsection{Omitted Proofs for Incremental Gradient Descenet}
We now provide the proof for Theorem~\ref{thm:convergence-IGD} in Section~\ref{sec:smooth} in the smooth convex settings. We first prove the following technical lemma, which bounds the inner product term $\gT_2 := \frac{\eta_k}{n}\sum_{i = 1}^m \big(\vv^{(i)} - \vy_k^{(i)}\big)^{\top}\mA^{(i)}(\vx_{k} - \vx_{k - 1, i})$ without random permutations involved.
\begin{restatable}{lemma}{secondInnerIGD}
\label{lemma:second-inner-IGD}
For any $k \in [K]$, the iterates $\{\vy_k^{(i)}\}_{i = 1}^m$ and $\{\vx_{k - 1, i}\}_{i = 1}^{m + 1}$ generated by Algorithm~\ref{alg:PD-shuffled-sgd} with fixed data ordering satisfy 
\begin{equation}\label{eq:sec-inn-bnd-IGD}
\begin{aligned}
\gT_2 \leq \frac{\eta_k^3 n}{b^2}\Hat{L}_0\Tilde{L}_0\|\vy_k - \vy_{*}\|^2_{\mLambda^{-1}} + \frac{\eta_k}{2n}\|\vv - \vy_k\|^2_{\mLambda^{-1}} + \min\Big\{\frac{\eta_k^3 n}{b^2}\Hat{L}_0\Tilde{L}_0\|\vy_{*}\|^2_{\mLambda^{-1}}, \frac{\eta_k^3 (n - b)^2}{b^2}\Tilde{L}_0\sigma_*^2\Big\}. 
\end{aligned}
\end{equation}
\end{restatable}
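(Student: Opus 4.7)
The plan is to specialize the proof of Lemma~\ref{lemma:second-inner} to the deterministic setting $\pi^{(k)} \equiv \pi^{(0)}$, which eliminates all randomness: $\mA_k = \mA$, $\vy_{*,k} = \vy_*$, $\mLambda_k = \mLambda$, $\Hat{L}_{\pi^{(k)}} = \Hat{L}_0$, and $\Tilde{L}_{\pi^{(k)}} = \Tilde{L}_0$ for every $k$. Running the same derivation up through the decomposition \eqref{eq:inn-prod-2-fin-bnd-1}--\eqref{eq:inn-prod-2-fin-bnd-2} and Young's inequality, but now choosing $\beta = 2\eta_k \Tilde{L}_0$ directly (there is no longer any need to pass to the worst-case $\Tilde{L}$), produces
\begin{equation*}
\gT_2 \leq \frac{\eta_k^3 \Tilde{L}_0}{nb}\, S \,+\, \frac{\eta_k^3 n \Hat{L}_0 \Tilde{L}_0}{b^2}\|\vy_k - \vy_*\|_{\mLambda^{-1}}^2 \,+\, \frac{\eta_k}{2n}\|\vv - \vy_k\|_{\mLambda^{-1}}^2, \qquad S := \sum_{i=1}^m \|\mA^{\top}\mI_{b(i-1)\uparrow}\vy_*\|_2^2.
\end{equation*}
The last two terms already match the statement, so the remaining task is to bound $S$ by the two expressions inside the $\min$.

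For the first entry of the $\min$, I treat $S$ as a quadratic form and argue exactly as in \eqref{eq:cyclic}: Cauchy--Schwarz together with the definition of $\Hat{L}_0$ from \eqref{eq:new-smoothness-constants} gives
\begin{equation*}
S = \vy_*^{\top} \Big(\textstyle\sum_{i=1}^m \mI_{b(i-1)\uparrow}\mA\mA^{\top}\mI_{b(i-1)\uparrow}\Big)\vy_* \leq mn \Hat{L}_0 \|\vy_*\|_{\mLambda^{-1}}^2 = \frac{n^2}{b}\Hat{L}_0 \|\vy_*\|_{\mLambda^{-1}}^2,
\end{equation*}
which, combined with the $\frac{\eta_k^3 \Tilde{L}_0}{nb}$ prefactor, yields the $\frac{\eta_k^3 n \Hat{L}_0 \Tilde{L}_0}{b^2}\|\vy_*\|_{\mLambda^{-1}}^2$ term. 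For the second entry, I invoke the first-order optimality $\mA^{\top}\vy_* = n\nabla f(\vx_*) = \vzero$ to rewrite $\mA^{\top}\mI_{b(i-1)\uparrow}\vy_* = -\sum_{j=1}^{b(i-1)} \vy_*^{j}\va_j$, apply Cauchy--Schwarz $\bigl\|\sum_{j=1}^{b(i-1)} \vy_*^{j}\va_j\bigr\|_2^2 \leq b(i-1)\sum_{j=1}^{b(i-1)} (\vy_*^{j})^2 \|\va_j\|_2^2$, and crucially bound $b(i-1) \leq b(m-1) = n - b$ \emph{before} swapping the summations.

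After the swap, each index $j \in [n-b]$ contributes to the inner sum for at most $m - 1$ values of $i$, so
\begin{equation*}
S \leq (n-b)(m-1)\sum_{j=1}^n (\vy_*^{j})^2 \|\va_j\|_2^2 = (n-b)(m-1)\,n\sigma_*^2 = \frac{n(n-b)^2}{b}\sigma_*^2,
\end{equation*}
which, together with the $\frac{\eta_k^3 \Tilde{L}_0}{nb}$ prefactor, yields the $\frac{\eta_k^3 (n-b)^2 \Tilde{L}_0 \sigma_*^2}{b^2}$ term and completes the proof. The main technical subtlety lies precisely in this last step: the more natural order of operations — first applying $\sum_{j=1}^{b(i-1)}(\vy_*^{j})^2\|\va_j\|^2 \leq n\sigma_*^2$ and then computing $\sum_{i=1}^m b(i-1) = \frac{n(n-b)}{2b}$ — only produces $\frac{n^2(n-b)}{2b}\sigma_*^2$, which is a factor $\Theta\bigl(\tfrac{n}{n-b}\bigr)$ worse than needed. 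Replacing $b(i-1)$ with its maximum $n - b$ prior to the swap balances the two $(n-b)$ factors (one from Cauchy--Schwarz, one from the swap count $m - 1 = (n-b)/b$) and recovers the claimed sharper bound.
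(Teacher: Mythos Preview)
Your proof is correct and follows essentially the same approach as the paper: the same decomposition of $\gT_2$ via $\vy_*$, the same Young's inequality with parameter $2\eta_k\Tilde{L}_0$, the same operator-norm bound for the $\Hat{L}_0$-branch of the $\min$, and the same ``bound $b(i-1)\le n-b$ before swapping sums'' trick for the $\sigma_*^2$-branch. Your final paragraph correctly identifies the key subtlety in the ordering of relaxations that distinguishes the $(n-b)^2$ bound from the weaker $n(n-b)/2$ one.
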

\begin{proof}
Proceeding as in Lemma~\ref{lemma:second-inner}, we have 
\begin{align}
    \gT_2 := \;& \frac{\eta_k}{n}\sum_{i = 1}^m \big(\vv^{(i)} - \vy_k^{(i)}\big)^{\top}\mA^{(i)}(\vx_{k} - \vx_{k - 1, i}) \notag \\
    = \;& -\frac{\eta_k^2}{bn}\sum_{i = 1}^m\innp{\mA^{\top}\mI_{b(i - 1)\uparrow}\vy_k, \mA^{\top}\mI_{(i)}(\vv - \vy_k)} \notag \\
    = \;& -\frac{\eta_k^2}{bn}\sum_{i = 1}^m\innp{\mA^{\top}\mI_{b(i - 1)\uparrow}(\vy_k - \vy_{*}), \mA^{\top}\mI_{(i)}(\vv - \vy_k)} \label{eq:inn-prod-2-fin-bnd-1-IGD} \\
    & -\frac{\eta_k^2}{bn}\sum_{i = 1}^m\innp{\mA^{\top}\mI_{b(i - 1)\uparrow}\vy_{*}, \mA^{\top}\mI_{(i)}(\vv - \vy_k)}, \label{eq:inn-prod-2-fin-bnd-2-IGD}
\end{align}
For both terms in Eq.~\eqref{eq:inn-prod-2-fin-bnd-1-IGD} and Eq.~\eqref{eq:inn-prod-2-fin-bnd-2-IGD}, we use Young's inequality for $\alpha = 2\eta_k \Tilde{L}_0 > 0$ and proceed as in Eq.~\eqref{eq:cyclic} to obtain
\begin{align}
    \;& -\frac{\eta_k^2}{bn}\sum_{i = 1}^m\innp{\mA^{\top}\mI_{b(i - 1)\uparrow}(\vy_k - \vy_{*}), \mA^{\top}\mI_{(i)}(\vv - \vy_k)} \notag \\
    \leq \;& \frac{\eta_k^2\alpha}{2bn}\sum_{i = 1}^m\|\mA^{\top}\mI_{b(i - 1)\uparrow}(\vy_k - \vy_{*})\|_2^2 + \frac{\eta_k^2}{2bn\alpha}\sum_{i = 1}^m\|\mA^{\top}\mI_{(i)}(\vv - \vy_k)\|_2^2 \notag\\
    \leq \;& \frac{\eta_k^2 n \alpha}{2b^2}\Hat{L}_0\|\vy_k - \vy_{*}\|^2_{\mLambda^{-1}} + \frac{\eta_k^2}{2n\alpha}\Tilde{L}_0\|\vv - \vy_k\|^2_{\mLambda^{-1}} \notag \\
    = \;& \frac{\eta_k^3 n}{b^2}\Hat{L}_0\Tilde{L}_0\|\vy_k - \vy_{*}\|^2_{\mLambda^{-1}} + \frac{\eta_k}{4n}\|\vv - \vy_k\|^2_{\mLambda^{-1}} \label{eq:sec-inn-IGD-1}
\end{align}
and 
\begin{align}
    -\frac{\eta_k^2}{bn}\sum_{i = 1}^m\innp{\mA^{\top}\mI_{b(i - 1)\uparrow}\vy_{*}, \mA^{\top}\mI_{(i)}(\vv - \vy_k)}
    \leq \;& \frac{\eta_k^2\alpha}{2bn}\sum_{i = 1}^m\|\mA^{\top}\mI_{b(i - 1)\uparrow}\vy_{*}\|_2^2 + \frac{\eta_k^2}{2bn\alpha}\sum_{i = 1}^m\|\mA^{\top}\mI_{(i)}(\vv - \vy_k)\|_2^2 \notag \\
    \leq \;& \frac{\eta_k^2\alpha}{2bn}\sum_{i = 1}^m\|\mA^{\top}\mI_{b(i - 1)\uparrow}\vy_{*}\|_2^2 + \frac{\eta_k^2}{2n\alpha}\Tilde{L}_0\|\vv - \vy_k\|^2_{\mLambda^{-1}} \notag \\
    = \;& \frac{\eta_k^3\Tilde{L}_0}{nb}\sum_{i = 1}^m\|\mA^{\top}\mI_{b(i - 1)\uparrow}\vy_{*}\|_2^2 + \frac{\eta_k}{4n}\|\vv - \vy_k\|^2_{\mLambda^{-1}}, \label{eq:sec-inn-IGD-2}
\end{align}
where again we used $\alpha = 2\eta_k\Tilde{L}_0$.
We then prove the term $\frac{\eta_k^3\Tilde{L}_0}{nb}\sum_{i = 1}^m\|\mA^{\top}\mI_{b(i - 1)\uparrow}\vy_{*}\|_2^2$ in Eq.~\eqref{eq:sec-inn-IGD-2} is no larger than the minimum of $\frac{\eta_k^3 n}{b^2}\Hat{L}_0\Tilde{L}_0\|\vy_{*}\|^2_{\mLambda^{-1}}$ and 
$\frac{\eta_k^3 (n - b)^2}{b^2}\Tilde{L}_0\sigma_*^2$.
Note that when $b = n$, we have $\mA^{\top}\mI_{(0)\uparrow}\vy_{*} = 0$, so this term disappears. When $b < n$, the former one can be derived as in Eq.\eqref{eq:cyclic}, which gives 
\begin{equation*}
    \sum_{i = 1}^m\|\mA^{\top}\mI_{b(i - 1)\uparrow}\vy_{*}\|_2^2 \leq \Big\|\mLambda^{1/2}\Big(\sum_{i = 1}^m\mI_{b(i - 1)\uparrow}\mA\mA^{\top}\mI_{b(i - 1)\uparrow}\Big)\mLambda^{1/2}\Big\|_2\|\vy_*\|^2_{\mLambda^{-1}} = mn\Hat{L}_0\|\vy_*\|^2_{\mLambda^{-1}} = \frac{n^2}{b}\Hat{L}_0\|\vy_*\|^2_{\mLambda^{-1}}.
\end{equation*}
For the latter one, we notice that 
\begin{equation*}
\begin{aligned}
    \sum_{i = 1}^m\|\mA^{\top}\mI_{b(i - 1)\uparrow}\vy_{*}\|_2^2 = \;& \sum_{i = 1}^m\Big\|\sum_{j = b(i - 1) + 1}^n\vy_*^j\va_j\Big\|_2^2 = \sum_{i = 0}^{m - 1}\Big\|\sum_{j = bi + 1}^n\vy_*^j\va_j\Big\|_2^2 = \sum_{i = 1}^{m - 1}\Big\|\sum_{j = bi + 1}^n\vy_*^j\va_j\Big\|_2^2 = \sum_{i = 1}^{m - 1}\Big\|\sum_{j = 1}^{bi}\vy_*^j\va_j\Big\|_2^2, 
\end{aligned}
\end{equation*}
by using the fact that $\sum_{j = 1}^n \vy_*^j\va_j = 0$. Using Young's inequality, we have 
\begin{equation*}
\begin{aligned}
\sum_{i = 1}^{m - 1}\Big\|\sum_{j = 1}^{bi}\vy_*^j\va_j\Big\|_2^2 \leq \;& \sum_{i = 1}^{m - 1}bi\sum_{j = 1}^{bi}\|\vy_*^j\va_j\|^2_2 \\
\leq \;& b(m - 1)\sum_{i = 1}^{m - 1}\sum_{j = 1}^{bi}\|\vy_*^j\va_j\|^2_2 \\
= \;& b(m - 1)\sum_{i = 1}^{m - 1}\sum_{j = b(i - 1) + 1}^{bi}(m - i)\|\vy_*^j\va_j\|^2_2 \\
\leq \;& b(m - 1)^2\sum_{i = 1}^{(m - 1)b}\|\vy_*^j\va_j\|^2_2.
\end{aligned}
\end{equation*}
By the definition that $\sigma_*^2 = \frac{1}{n}\sum_{j = 1}^n\|\vy_*^j \va_j\|^2_2$ and $\sum_{i = i}^{(m - 1)b}\|\vy_*^j\va_j\|^2_2 \leq \sum_{j = 1}^n\|\vy_*^j \va_j\|^2_2 = n\sigma_*^2$, we obtain 
\begin{equation}\label{eq:inn-prod-2-fin-bnd-2-3-IGD-2}
    \frac{\eta_k^3\Tilde{L}_0}{nb}\sum_{i = 1}^m\|\mA^{\top}\mI_{b(i - 1)\uparrow}\vy_{*}\|_2^2 \leq \frac{\eta_k^3\Tilde{L}_0}{b}b(m - 1)^2\sigma_*^2 = \frac{\eta_k^3 (n - b)^2}{b^2}\Tilde{L}_0\sigma_*^2.
\end{equation}
Note that the bound in Eq.~\eqref{eq:inn-prod-2-fin-bnd-2-3-IGD-2} equals to zero when $b = n$, which recovers the case of full gradient descent, so we have 
\begin{equation}
    \frac{\eta_k^3\Tilde{L}_0}{nb}\sum_{i = 1}^m\|\mA^{\top}\mI_{b(i - 1)\uparrow}\vy_{*}\|_2^2 \leq \min\Big\{\frac{\eta_k^3 n}{b^2}\Hat{L}_0\Tilde{L}_0\|\vy_{*}\|^2_{\mLambda^{-1}}, \frac{\eta_k^3 (n - b)^2}{b^2}\Tilde{L}_0\sigma_*^2\Big\}. \label{eq:sec-inn-IGD-3}
\end{equation}
Combining Eq.~\eqref{eq:sec-inn-IGD-1}--\eqref{eq:sec-inn-IGD-3}, we obtain 
\begin{equation*}
    \gT_2 \leq \frac{\eta_k^3 n}{b^2}\Hat{L}_0\Tilde{L}_0\|\vy_k - \vy_{*}\|^2_{\mLambda^{-1}} + \frac{\eta_k}{2n}\|\vv - \vy_k\|^2_{\mLambda^{-1}} + \min\Big\{\frac{\eta_k^3 n}{b^2}\Hat{L}_0\Tilde{L}_0\|\vy_{*}\|^2_{\mLambda^{-1}}, \frac{\eta_k^3 (n - b)^2}{b^2}\Tilde{L}_0\sigma_*^2\Big\}, 
\end{equation*}
thus finishing the proof.
\end{proof}

\convergenceIGD*
\begin{proof}
Proceeding as in Lemmas~\ref{lemma:gap-bound}~and~\ref{lemma:first-inner}, but without random permutations, we have 
\begin{align}
    \gE_k \leq \;& \frac{\eta_k}{n}\sum_{i = 1}^m\vy_k^{(i) \top}\mA^{(i)}(\vx_{k} - \vx_{k - 1, i + 1}) + \frac{\eta_k}{n}\sum_{i = 1}^m \big(\vv^{(i)} - \vy_k^{(i)}\big)^{\top}\mA^{(i)}(\vx_{k} - \vx_{k - 1, i}) \notag \\
    & - \frac{\eta_k}{2n}\|\vy_k - \vv\|_{\mLambda^{-1}}^2 - \frac{\eta_k}{2n}\|\vy_k - \vy_{*}\|_{\mLambda^{-1}}^2 - \frac{b}{2n}\sum_{i = 1}^m\|\vx_{k - 1, i} - \vx_{k - 1, i + 1}\|^2_2 \notag \\
    \leq \;& \frac{\eta_k}{n}\sum_{i = 1}^m \big(\vv^{(i)} - \vy_k^{(i)}\big)^{\top}\mA_k^{(i)}(\vx_{k} - \vx_{k - 1, i}) - \frac{\eta_k}{2n}\|\vy_k - \vv\|_{\mLambda^{-1}}^2 - \frac{\eta_k}{2n}\|\vy_k - \vy_{*}\|_{\mLambda^{-1}}^2. \label{eq:gap-bound-IGD-1}
\end{align}
Using the bound in Lemma~\ref{lemma:second-inner-IGD} and applying Eq.~\eqref{eq:sec-inn-bnd-IGD} into Eq.~\eqref{eq:gap-bound-IGD-1}, we obtain 
\begin{equation*}
    \gE_k \leq \Big(\frac{\eta_k^3 n \Hat{L}_0\Tilde{L}_0}{b^2} - \frac{\eta_k}{2n}\Big)\|\vy_k - \vy_{*}\|^2_{\mLambda^{-1}} + \min\Big\{\frac{\eta_k^3 n}{b^2}\Hat{L}_0\Tilde{L}_0\|\vy_{*}\|^2_{\mLambda^{-1}}, \frac{\eta_k^3 (n - b)^2}{b^2}\Tilde{L}_0\sigma_*^2\Big\}.
\end{equation*}
If $\eta_k \leq \frac{b}{n\sqrt{2\Hat{L}_0\Tilde{L}_0}}$, we have $\frac{\eta_k^3 n \Hat{L}_0\Tilde{L}_0}{b^2} - \frac{\eta_k}{2n} \leq 0$, thus 
\begin{equation*}
    \gE_k \leq \min\Big\{\frac{\eta_k^3 n}{b^2}\Hat{L}_0\Tilde{L}_0\|\vy_{*}\|^2_{\mLambda^{-1}}, \frac{\eta_k^3 (n - b)^2}{b^2}\Tilde{L}_0\sigma_*^2\Big\}. 
\end{equation*}
Noticing that $\gE_k = \eta_k\text{Gap}^\vv(\vx_{k}, \vy_*) + \frac{b}{2n}\|\vx_* - \vx_{k}\|_2^2 - \frac{b}{2n} \|\vx_* - \vx_{k - 1}\|_2^2$ and telescoping from $k = 1$ to $K$, we have 
\begin{equation*}
    \sum_{k = 1}^K\eta_k\text{Gap}^\vv(\vx_{k}, \vy_*) \leq \frac{b}{2n}\|\vx_* - \vx_0\|_2^2 - \frac{b}{2n}\|\vx_* - \vx_{K}\|_2^2 + \sum_{k = 1}^K\min\Big\{\frac{\eta_k^3 n}{b^2}\Hat{L}_0\Tilde{L}_0\|\vy_{*}\|^2_{\mLambda^{-1}}, \frac{\eta_k^3 (n - b)^2}{b^2}\Tilde{L}_0\sigma_*^2\Big\}.
\end{equation*}
Noticing that $\gL(\vx, \vv)$ is convex w.r.t.\ $\vx$, we have $\text{Gap}^\vv(\Hat\vx_{K}, \vy_*) \leq \sum_{k = 1}^K\eta_k\text{Gap}^\vv(\vx_{k}, \vy_*) / H_K$, where $\Hat{\vx}_K = \sum_{k=1}^K \eta_k \vx_{k} / H_K$ and $H_K = \sum_{k = 1}^K\eta_k$, so we obtain 
\begin{equation*}
    H_K\text{Gap}^\vv(\Hat\vx_{K}, \vy_*) \leq \frac{b}{2n}\|\vx_0 - \vx_*\|_2^2 + \sum_{k=1}^K \min\Big\{\frac{\eta_k^3 n}{b^2}\Hat{L}_0\Tilde{L}_0\|\vy_{*}\|^2_{\mLambda^{-1}}, \frac{\eta_k^3 (n - b)^2}{b^2}\Tilde{L}_0\sigma_*^2\Big\}, 
\end{equation*}
Further choosing $\vv = \vy_{\Hat{\vx}_K}$, we obtain 
\begin{equation}\label{eq:final-bound-IGD}
\begin{aligned}
    H_K\big(f(\Hat{\vx}_K) - f(\vx_*)\big) \leq \frac{b}{2n}\|\vx_0 - \vx_*\|_2^2 + \sum_{k=1}^K \min\Big\{\frac{\eta_k^3 n}{b^2}\Hat{L}_0\Tilde{L}_0\|\vy_{*}\|^2_{\mLambda^{-1}}, \frac{\eta_k^3 (n - b)^2}{b^2}\Tilde{L}_0\sigma_*^2\Big\}.
\end{aligned}
\end{equation}
To analyze the individual gradient oracle complexity, we choose constant stepsizes $\eta \leq \frac{b}{n\sqrt{2\Hat{L}_0\Tilde{L}_0}}$ and assume $b < n$ without loss of generality, then Eq.~\eqref{eq:final-bound-IGD} becomes 
\begin{equation*}
\begin{aligned}
    f(\Hat{\vx}_K) - f(\vx_*) \leq \;& \frac{b}{2n\eta K}\|\vx_0 - \vx_*\|_2^2 + \min\Big\{\frac{\eta^2 n}{b^2}\Hat{L}_0\Tilde{L}_0\|\vy_{*}\|^2_{\mLambda^{-1}}, \frac{\eta^2 (n - b)^2}{b^2}\Tilde{L}_0\sigma_*^2\Big\}.
\end{aligned}
\end{equation*}
When $\Hat{L}_0\|\vy_{*}\|^2_{\mLambda^{-1}} \leq \frac{(n - b)^2}{n}\sigma_*^2$, we set $\eta = \min\Big\{\frac{b}{n\sqrt{2\Hat{L}_0\Tilde{L}}_0},\, \Big(\frac{b^3\|\vx_0 - \vx_*\|^2_2}{2n^2\Hat{L}_0\Tilde{L}_0K\|\vy_*\|_{\mLambda^{-1}}^2}\Big)^{1/3}\Big\}$ and consider the following two possible cases: 
\begin{itemize}[leftmargin=*]
    \item ``Small $K$'' case: if $\eta = \frac{b}{n\sqrt{2\Hat{L}_0\Tilde{L}}_0} \leq \Big(\frac{b^3\|\vx_0 - \vx_*\|^2_2}{2n^2\Hat{L}_0\Tilde{L}_0K\|\vy_*\|_{\mLambda^{-1}}^2}\Big)^{1/3}$, we have 
    \begin{equation*}
    \begin{aligned}
        f(\Hat{\vx}_K) - f(\vx_*) \leq \;& \frac{b}{2n\eta K}\|\vx_0 - \vx_*\|_2^2 + \frac{\eta^2 n}{b^2}\Hat{L}_0\Tilde{L}_0\|\vy_{*}\|^2_{\mLambda^{-1}} \\
        \leq \;& \frac{\sqrt{\Hat{L}_0\Tilde{L}_0}}{\sqrt{2}K}\|\vx_0 - \vx_*\|_2^2 + \frac{\Hat{L}_0^{1/3}\Tilde{L}_0^{1/3}\|\vy_{*}\|_{\mLambda^{-1}}^{2/3}\|\vx_0 - \vx_*\|^{4/3}_2}{2^{2/3}n^{1/3}K^{2/3}}.
    \end{aligned}
    \end{equation*}
    \item ``Large $K$'' case: if $\eta = \Big(\frac{b^3\|\vx_0 - \vx_*\|^2_2}{2n^2\Hat{L}_0\Tilde{L}_0K\|\vy_*\|_{\mLambda^{-1}}^2}\Big)^{1/3} \leq \frac{b}{\sqrt{2\Hat{L}_0\Tilde{L}_0}}$, we have 
    \begin{equation*}
    \begin{aligned}
        f(\Hat{\vx}_K) - f(\vx_*) \leq \;& \frac{b}{2n\eta K}\|\vx_0 - \vx_*\|_2^2 + \frac{\eta^2 n}{b^2}\Hat{L}_0\Tilde{L}_0\|\vy_{*}\|^2_{\mLambda^{-1}} \\
        \leq \;& \frac{2^{1/3}\Hat{L}_0^{1/3}\Tilde{L}_0^{1/3}\|\vy_{*}\|_{\mLambda^{-1}}^{2/3}\|\vx_0 - \vx_*\|^{4/3}_2}{n^{1/3}K^{2/3}}.
    \end{aligned}
    \end{equation*}
\end{itemize}
Combining these two cases, we have 
\begin{equation*}
    f(\Hat{\vx}_K) - f(\vx_*) \leq \frac{\sqrt{\Hat{L}_0\Tilde{L}_0}}{\sqrt{2}K}\|\vx_0 - \vx_*\|_2^2 + \frac{2^{1/3}\Hat{L}_0^{1/3}\Tilde{L}_0^{1/3}\|\vy_{*}\|_{\mLambda^{-1}}^{2/3}\|\vx_0 - \vx_*\|^{4/3}_2}{n^{1/3}K^{2/3}}. 
\end{equation*}
Hence, to guarantee $\E[f(\Hat{\vx}_K) - f(\vx_*)] \leq \epsilon$ for $\epsilon > 0$, the total number of individual gradient evaluations will be 
\begin{equation}\label{eq:complexity-IGD-1}
    nK \geq \max\Big\{\frac{n\sqrt{2\Hat{L}_0\Tilde{L}_0}\|\vx_0 - \vx_*\|_2^2}{\epsilon}, \frac{4n^{1/2}\Hat{L}_0^{1/2}\Tilde{L}_0^{1/2}\|\vy_{*}\|_{\mLambda^{-1}}\|\vx_0 - \vx_*\|_2^2}{\epsilon^{3/2}}\Big\}. 
\end{equation}
When $\frac{(n - b)^2}{n}\sigma_*^2 \leq \Hat{L}_0\|\vy_*\|^2_{\mLambda^{-1}}$, we set $\eta = \min\Big\{\frac{b}{n\sqrt{2\Hat{L}_0\Tilde{L}}_0},\, \Big(\frac{b^3\|\vx_0 - \vx_*\|^2_2}{2n(n - b)^2\Tilde{L}_0K\sigma_*^2}\Big)^{1/3}\Big\}$ and consider the two cases as below: 
\begin{itemize}
    \item ``Small $K$'' case: if $\eta = \frac{b}{n\sqrt{2\Hat{L}_0\Tilde{L}}_0} \leq \Big(\frac{b^3\|\vx_0 - \vx_*\|^2_2}{2n(n - b)^2\Tilde{L}_0K\sigma_*^2}\Big)^{1/3}$, we have 
    \begin{equation*}
    \begin{aligned}
        f(\Hat{\vx}_K) - f(\vx_*) \leq \;& \frac{b}{2n\eta K}\|\vx_0 - \vx_*\|_2^2 +  \frac{\eta^2 (n - b)^2}{b^2}\Tilde{L}_0\sigma_*^2 \\
        \leq \;& \frac{\sqrt{\Hat{L}_0\Tilde{L}_0}}{\sqrt{2}K}\|\vx_0 - \vx_*\|_2^2 + \frac{(n - b)^{2/3}\Tilde{L}_0^{1/3}\sigma_*^{2/3}\|\vx_0 - \vx_*\|_2^{4/3}}{2^{2/3}n^{2/3}K^{2/3}}.
    \end{aligned}
    \end{equation*}
    \item ``Large $K$'' case: if $\eta = \Big(\frac{b^3\|\vx_0 - \vx_*\|^2_2}{2n(n - b)^2\Tilde{L}_0K\sigma_*^2}\Big)^{1/3} \leq \frac{b}{n\sqrt{2\Hat{L}_0\Tilde{L}}_0}$, we have 
    \begin{equation*}
    \begin{aligned}
        f(\Hat{\vx}_K) - f(\vx_*) \leq \;& \frac{b}{2n\eta K}\|\vx_0 - \vx_*\|_2^2 +  \frac{\eta^2 (n - b)^2}{b^2}\Tilde{L}_0\sigma_*^2 \\
        \leq \;& \frac{2^{1/3}(n - b)^{2/3}\Tilde{L}_0^{1/3}\sigma_*^{2/3}\|\vx_0 - \vx_*\|_2^{4/3}}{n^{2/3}K^{2/3}}.
    \end{aligned}
    \end{equation*}
\end{itemize}
Combining these two cases, we obtain 
\begin{equation*}
    f(\Hat{\vx}_K) - f(\vx_*) \leq \frac{\sqrt{\Hat{L}_0\Tilde{L}_0}}{\sqrt{2}K}\|\vx_0 - \vx_*\|_2^2 + \frac{2^{1/3}(n - b)^{2/3}\Tilde{L}_0^{1/3}\sigma_*^{2/3}\|\vx_0 - \vx_*\|_2^{4/3}}{n^{2/3}K^{2/3}}.
\end{equation*}
To guarantee $\E[f(\Hat{\vx}_K) - f(\vx_*)] \leq \epsilon$ for $\epsilon > 0$, the total number of individual gradient evaluations will be 
\begin{equation}\label{eq:complexity-IGD-2}
    nK \geq \max\Big\{\frac{n\sqrt{2\Hat{L}_0\Tilde{L}_0}\|\vx_0 - \vx_*\|_2^2}{\epsilon}, \frac{4(n - b)\Tilde{L}_0^{1/2}\sigma_*\|\vx_0 - \vx_*\|_2^2}{\epsilon^{3/2}}\Big\}.
\end{equation}
Combining Eq.~\eqref{eq:complexity-IGD-1} and Eq.~\eqref{eq:complexity-IGD-2}, we finally have 
\begin{equation*}
    nK \geq \frac{n\sqrt{2\Hat{L}_0\Tilde{L}_0}\|\vx_0 - \vx_*\|_2^2}{\epsilon} + \min\Big\{\frac{4n^{1/2}\Hat{L}_0^{1/2}\Tilde{L}_0^{1/2}\|\vy_{*}\|_{\mLambda^{-1}}\|\vx_0 - \vx_*\|_2^2}{\epsilon^{3/2}}, \frac{4(n - b)\Tilde{L}_0^{1/2}\sigma_*\|\vx_0 - \vx_*\|_2^2}{\epsilon^{3/2}}\Big\}, 
\end{equation*}
thus finishing the proof.
\end{proof}

\section{Omitted Proofs from Section~\ref{sec:nonsmooth}}\label{appx:nonsmooth}
Before we prove Theorem~\ref{thm:convergence-nonsmooth} in convex Lipschitz settings, for completeness, we first recall the following standard first-order characterization of convexity.
\begin{lemma}\label{lemma:convex}
Let $f: \R^d \rightarrow \R$ be a continuous convex function. Then, for any $\vx, \vy \in \R^d$: 
\begin{equation*}
    f(\vy) \geq f(\vx) + \innp{\vg_\vx, \vy - \vx}, 
\end{equation*}
where $\vg_\vx \in \partial f(\vx)$, and $\partial f(\vx)$ is the subdifferential of $f$ at $\vx$.
\end{lemma}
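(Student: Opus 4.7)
The plan is to proceed from the definition of the subdifferential. The standard convex-analysis definition of $\partial f(\vx)$ for a convex function $f$ is precisely the set of vectors $\vg$ such that $f(\vy) \geq f(\vx) + \innp{\vg,\vy-\vx}$ for all $\vy \in \R^d$, in which case the lemma is tautological and nothing further is required. I would state the lemma this way, noting that this definition is the one commonly adopted in first-order convex optimization references the paper already cites (e.g., \citet{beck2017first}).

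In case the reader prefers the alternative definition via directional derivatives --- $\vg \in \partial f(\vx)$ iff $\innp{\vg,\vd} \leq f'(\vx;\vd)$ for every direction $\vd$ --- I would provide a one-line derivation from convexity. By convexity, for any $\lambda \in (0,1)$,
\begin{equation*}
f(\vx + \lambda(\vy-\vx)) \leq \lambda f(\vy) + (1-\lambda) f(\vx),
\end{equation*}
which after rearrangement gives $\frac{f(\vx + \lambda(\vy-\vx)) - f(\vx)}{\lambda} \leq f(\vy) - f(\vx)$. The left-hand difference quotient is monotone nondecreasing in $\lambda$ (a standard property of convex functions), so sending $\lambda \downarrow 0$ yields $f'(\vx;\vy-\vx) \leq f(\vy) - f(\vx)$. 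Combining with the defining inequality $\innp{\vg_\vx, \vy-\vx} \leq f'(\vx;\vy-\vx)$ then gives the claim.

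There is no real obstacle here: the lemma is a textbook fact and the only thing to verify is that whichever definition of the subdifferential is in force is equivalent to the other in the continuous convex setting, which it is. I would therefore keep the written proof to a single sentence (a pointer to the definition or a reference), since the result is stated in the appendix solely ``for completeness'' to support the nonsmooth analysis in Section~\ref{sec:nonsmooth}.
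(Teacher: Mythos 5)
Your proposal is correct and matches the paper's own treatment: the paper states Lemma~\ref{lemma:convex} without proof, recalling it as a standard fact, since the displayed inequality is precisely the defining property of $\partial f(\vx)$ in the convex-analysis references the paper relies on. Your backup derivation via the monotone difference quotient and the directional-derivative characterization of subgradients is also correct, so a one-sentence pointer to the definition is exactly the right level of detail here.
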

The following technical lemma provides a primal-dual gap bound in convex nonsmooth settings.
\begin{lemma}
\label{lemma:gap-bound-nonsmooth}
Under Assumption~\ref{assp:convex}, for any $k \in [K]$, the iterates $\{\vy_k^{(i)}\}_{i = 1}^m$ and $\{\vx_{k - 1, i}\}_{i = 1}^{m + 1}$ generated by Algorithm~\ref{alg:PD-shuffled-sgd} satisfy 
\begin{equation}\label{eq:gap-bound-nonsmooth}
\begin{aligned}
    \gE_k \leq \;& \frac{\eta_k}{n}\sum_{i=1}^m \Big(\vy_k^{(i) \top}\mA_k^{(i)}(\vx_{k} - \vx_{k - 1, i + 1}) + (\vv^{(i)}_k - \vy_k^{(i)})^{\top}\mA_k^{(i)}(\vx_{k} - \vx_{k - 1, i})\Big) \\
    & - \frac{b}{2n}\sum_{i=1}^m \|\vx_{k - 1, i} - \vx_{k - 1, i+1}\|_2^2, 
\end{aligned}
\end{equation}
where $\gE_k := \eta_k \big(\gL(\vx_{k}, \vv) - \gL(\vx_*, \vy_*)\big) + \frac{b}{2n}\|\vx_* - \vx_{k}\|_2^2 - \frac{b}{2n} \|\vx_* - \vx_{k - 1}\|_2^2$. 
\end{lemma}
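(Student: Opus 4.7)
The plan is to mimic the proof of Lemma~\ref{lemma:gap-bound} verbatim, substituting ordinary convexity of $\ell_i^*$ (Lemma~\ref{lemma:convex}) for the $1/L_i$-strong convexity used in the smooth case. Since the only places where Assumption~\ref{assp:smooth} entered the smooth proof were the two first-order expansions of $\ell_{\pi_j^{(k)}}^*$ around $\vy_k^j$, dropping the associated quadratic remainders will yield exactly the bound \eqref{eq:gap-bound-nonsmooth} with the two $\|\cdot\|_{\mLambda_k^{-1}}^2$ terms removed.

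First, I would exploit that Line~6 of Algorithm~\ref{alg:PD-shuffled-sgd} makes $\vy_k^{(i)}$ the maximizer of a separable concave problem, so that $\va_{\pi_j^{(k)}}^\top \vx_{k-1,i} \in \partial \ell_{\pi_j^{(k)}}^*(\vy_k^j)$ for every $b(i-1)+1 \leq j \leq bi$. Then Lemma~\ref{lemma:convex} applied to $\ell_{\pi_j^{(k)}}^*$ at $\vy_k^j$ with test point $\vv_k^j$ gives $\ell_{\pi_j^{(k)}}^*(\vv_k^j) \geq \ell_{\pi_j^{(k)}}^*(\vy_k^j) + \va_{\pi_j^{(k)}}^\top \vx_{k-1,i}(\vv_k^j - \vy_k^j)$. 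Summing over the block and rearranging, exactly as in the derivation of \eqref{eq:ub-perm-batch}, produces the upper bound on $\gL(\vx_k, \vv)$, but now without the $-\frac{1}{2n}\|\vy_k - \vv_k\|_{\mLambda_k^{-1}}^2$ term.

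Second, I would use $\va_j^\top \vx_* \in \partial \ell_j^*(\vy_*^j)$ (which follows from $\vx_*$ being optimal and the conjugacy relation) and apply Lemma~\ref{lemma:convex} once more to lower bound $\ell_j^*(\vy_k^j)$ in terms of $\ell_j^*(\vy_*^j)$ and $\va_j^\top \vx_*(\vy_k^j - \vy_*^j)$. This is the analogue of \eqref{eq:lb-perm-batch-1} without the $\frac{1}{2n}\|\vy_k - \vy_{*,k}\|_{\mLambda_k^{-1}}^2$ term. I would then handle the primal side identically to the smooth proof: the $\frac{b}{\eta_k}$-strong convexity of $\phi_k^{(i)}(\vx) = \vy_k^{(i)\top}\mA_k^{(i)}\vx + \frac{b}{2\eta_k}\|\vx - \vx_{k-1,i}\|_2^2$ (coming from the quadratic regularizer, not from the losses) together with $\nabla \phi_k^{(i)}(\vx_{k-1,i+1}) = \vzero$ gives $\phi_k^{(i)}(\vx_*) \geq \phi_k^{(i)}(\vx_{k-1,i+1}) + \frac{b}{2\eta_k}\|\vx_* - \vx_{k-1,i+1}\|_2^2$. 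Telescoping the displacement terms across $i = 1, \dots, m$ produces the $\frac{b}{2n\eta_k}(\|\vx_k - \vx_*\|_2^2 - \|\vx_{k-1} - \vx_*\|_2^2)$ contribution and the negative $\frac{b}{2n\eta_k}\|\vx_{k-1,i+1} - \vx_{k-1,i}\|_2^2$ terms, exactly as in \eqref{eq:lb-perm-batch}.

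Finally, combining the two bounds in the definition of $\gE_k$ and performing the index shift $\vy_k^{(i)\top}\mA_k^{(i)}(\vx_{k-1,i} - \vx_{k-1,i+1}) = \vy_k^{(i)\top}\mA_k^{(i)}(\vx_k - \vx_{k-1,i+1}) - \vy_k^{(i)\top}\mA_k^{(i)}(\vx_k - \vx_{k-1,i})$ (so that the $\vv_k^{(i)\top}\mA_k^{(i)}(\vx_k - \vx_{k-1,i})$ term from the upper bound can be grouped with $-\vy_k^{(i)\top}\mA_k^{(i)}(\vx_k - \vx_{k-1,i})$) yields \eqref{eq:gap-bound-nonsmooth}. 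There is no real obstacle here: the argument is a direct specialization of the smooth proof with the two quadratic lower bounds from strong convexity replaced by zero. The only thing to be careful about is the bookkeeping when rewriting $\vx_{k-1,i} - \vx_{k-1,i+1}$ in terms of $\vx_k - \vx_{k-1,i+1}$ and $\vx_k - \vx_{k-1,i}$, but this is identical to the manipulation already carried out at the end of the proof of Lemma~\ref{lemma:gap-bound}.
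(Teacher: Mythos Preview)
Your proposal is correct and follows essentially the same approach as the paper: both proofs replicate the argument of Lemma~\ref{lemma:gap-bound} verbatim, replacing the $1/L_i$-strong convexity of $\ell_{\pi_j^{(k)}}^*$ with plain convexity (Lemma~\ref{lemma:convex}) at the two expansion points, which drops the $\|\cdot\|_{\mLambda_k^{-1}}^2$ terms while keeping the primal strong-convexity step and the telescoping unchanged.
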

\begin{proof}
By the same argument as in the proof for Lemma~\ref{lemma:gap-bound}, we know that $\va_{\pi_{j}^{(k)}}^\top\vx_{k - 1, i} \in \partial \ell_{\pi_{j}^{(k)}}^*(\vy_k^j)$ for $b(i - 1) + 1 \leq j \leq bi$, then by Lemma~\ref{lemma:convex} we have 
\begin{equation*}
    \ell_{\pi_{j}^{(k)}}^*\big(\vv_k^j\big) \geq \ell_{\pi_{j}^{(k)}}^*(\vy_k^j) + \va_{\pi_{j}^{(k)}}^\top\vx_{k - 1, i}\big(\vv_k^j - \vy_k^j\big),
\end{equation*}
which leads to 
\begin{align}
    \gL(\vx_{k}, \vv) = \;& \frac{1}{n}\sum_{i=1}^m \Big(\vv^{(i) \top}_k\mA_k^{(i)}\vx_{k - 1, i} - \sum_{j = b(i - 1) + 1}^{bi}\ell_{\pi^{(k)}_{j}}^*(\vv^{j}_k)\Big) + \frac{1}{n}\sum_{i=1}^m \vv^{(i) \top}_k\mA_k^{(i)}(\vx_{k} - \vx_{k - 1, i}) \notag\\
    \leq \;& \frac{1}{n}\sum_{i=1}^m \Big(\vy^{(i) \top}_k\mA_k^{(i)}\vx_{k - 1, i} - \sum_{j = b(i - 1) + 1}^{bi}\ell_{\pi^{(k)}_{j}}^*(\vy^{j}_k)\Big) + \frac{1}{n}\sum_{i=1}^m \vv^{(i) \top}_k\mA_k^{(i)}(\vx_{k} - \vx_{k - 1, i}). \label{eq:gap-bound-1-IGD}
\end{align}
Using the same argument for $\gL(\vx_*, \vy_*)$ as $\va_j^\top\vx_* \in \partial \ell_j^*(\vy_*^j)$ for $j \in [n]$, we have 
\begin{align}
    \gL(\vx_*, \vy_*) 
    = \;& \frac{1}{n}\sum_{i = 1}^m \Big(\vy_{*, k}^{(i) \top}\mA_k^{(i)}\vx_* - \sum_{j = b(i - 1) + 1}^{bi}\ell^*_{\pi_j^{(k)}}(\vy_{*, k}^j)\Big) \notag\\
    \geq \;& \frac{1}{n}\sum_{i = 1}^m\Big(\vy_k^{(i) \top}\mA_k^{(i)}\vx_* - \sum_{j = b(i - 1) + 1}^{bi}\ell^*_{\pi_j^{(k)}}(\vy_{k}^j)\Big). \label{eq:gap-bound-2-IGD}
\end{align}
Adding and substracting the term $\frac{b}{2n\eta_k}\sum_{i = 1}^m\|\vx_* - \vx_{k - 1, i}\|_2^2$ on the R.H.S.\ of Eq.~\eqref{eq:gap-bound-2-IGD}, we obtain
\begin{equation*}
\begin{aligned}
    \gL(\vx_*, \vy_*) \geq \;& \frac{1}{n}\sum_{i = 1}^m\Big(\vy_k^{(i) \top}\mA_k^{(i)}\vx_* + \frac{b}{2\eta_k}\|\vx_* - \vx_{k - 1, i}\|_2^2 - \sum_{j = b(i - 1) + 1}^{bi}\ell^*_{\pi_j^{(k)}}(\vy_{k}^j)\Big) \\
    & - \frac{b}{2n\eta_k}\sum_{i = 1}^m\|\vx_* - \vx_{k - 1, i}\|_2^2 .
\end{aligned}
\end{equation*}
Denote $\phi_k^{(i)}(\vx) := \vy_k^{(i)\top}\mA_k^{(i)}\vx + \frac{b}{2\eta_k}\|\vx - \vx_{k - 1, i}\|_2^2$, which is $\frac{b}{\eta_k}$-strongly convex w.r.t.\ $\vx$. 
Noticing that $\vx_{k - 1, i + 1} = \argmin_{\vx \in \R^d}\Big\{\vy_k^{(i)\top}\mA_k^{(i)}\vx + \frac{b}{2\eta_k}\|\vx - \vx_{k - 1, i}\|^2\Big\}$ by Line 7
% ~\ref{algo-line:primal-update} 
of Alg.~\ref{alg:PD-shuffled-sgd},  we have $\nabla \phi_k^{(i)}(\vx_{k - 1, i + 1}) = \vzero$, which leads to 
\begin{equation*}
    \phi_k^{(i)}(\vx_*) \geq \phi_k^{(i)}(\vx_{k - 1, i + 1}) + \frac{b}{2\eta_k}\|\vx_* - \vx_{k - 1, i + 1}\|^2_2.
\end{equation*}
Thus, we obtain 
\begin{align}
    \gL(\vx_*, \vy_*) \geq \;& \frac{1}{n}\sum_{i = 1}^m\Big(\vy_k^{(i) \top}\mA_k^{(i)}\vx_{k - 1, i + 1} + \frac{b}{2\eta_k}\|\vx_{k - 1, i + 1} - \vx_{k - 1, i}\|^2_2 - \sum_{j = b(i - 1) + 1}^{bi}\ell^*_{\pi_j^{(k)}}(\vy_{k}^j)\Big) \notag \\
    & + \frac{b}{2n\eta_k}\sum_{i = 1}^m\big(\|\vx_* - \vx_{k - 1, i + 1}\|^2_2 - \|\vx_* - \vx_{k - 1, i}\|_2^2\big) \notag \\
    \overset{(\romannumeral1)}{=} \;& \frac{1}{n}\sum_{i = 1}^m\Big(\vy_k^{(i) \top}\mA_k^{(i)}\vx_{k - 1, i + 1} + \frac{b}{2\eta_k}\|\vx_{k - 1, i + 1} - \vx_{k - 1, i}\|^2_2 - \sum_{j = b(i - 1) + 1}^{bi}\ell^*_{\pi_j^{(k)}}(\vy_{k}^j)\Big) \notag \\
    & + \frac{b}{2n\eta_k}\|\vx_{k} - \vx_*\|_2^2 - \frac{b}{2n\eta_k}\|\vx_{k - 1} - \vx_*\|_2^2, \label{eq:gap-bound-3-IGD}
\end{align}
where $(\romannumeral1)$ is by telescoping  $\sum_{i = 1}^m\big(\|\vx_* - \vx_{k - 1, i + 1}\|^2_2 - \|\vx_* - \vx_{k - 1, i}\|_2^2\big)$ and using $\vx_k = \vx_{k - 1, m + 1}$ and $\vx_{k - 1} = \vx_{k - 1, 1}$, which both hold by definition.

Combining the bounds from Eq.~\eqref{eq:gap-bound-1-IGD} and Eq.~\eqref{eq:gap-bound-3-IGD}, and denoting 
\begin{equation*}
    \gE_k := \eta_k \big(\gL(\vx_{k}, \vv) - \gL(\vx_*, \vy_*)\big) + \frac{b}{2n}\|\vx_* - \vx_{k}\|_2^2 - \frac{b}{2n} \|\vx_* - \vx_{k - 1}\|_2^2, 
\end{equation*}
we finally obtain 
\begin{equation*}
\begin{aligned}
    \gE_k 
    \leq \;& \frac{\eta_k}{n}\sum_{i=1}^m \vy_k^{(i) \top}\mA_k^{(i)}(\vx_{k - 1, i} - \vx_{k - 1, i + 1}) + \frac{\eta_k}{n}\sum_{i=1}^m \vv^{(i) \top}_k\mA_k^{(i)}(\vx_{k} - \vx_{k - 1, i}) \\
    & - \frac{b}{2n}\sum_{i=1}^m \|\vx_{k - 1, i} - \vx_{k - 1, i+1}\|_2^2 \\
    = \;& \frac{\eta_k}{n}\sum_{i=1}^m \vy_k^{(i) \top}\mA_k^{(i)}(\vx_{k} - \vx_{k - 1, i + 1}) + \frac{\eta_k}{n}\sum_{i=1}^m (\vv^{(i)}_k - \vy_k^{(i)})^{\top}\mA_k^{(i)}(\vx_{k} - \vx_{k - 1, i}) \\
    & - \frac{b}{2n}\sum_{i=1}^m \|\vx_{k - 1, i} - \vx_{k - 1, i+1}\|_2^2, 
\end{aligned}
\end{equation*}
thus completing the proof. 
\end{proof}
Note that we can still use Lemma~\ref{lemma:first-inner} to bound the first inner product term in Eq.~\eqref{eq:gap-bound-nonsmooth}, as we are studying the same algorithm. The following lemma provides a bound on the second inner product term $\gT_2 := \frac{\eta_k}{n}\sum_{i = 1}^m \big(\vv^{(i)}_k - \vy_k^{(i)}\big)^{\top}\mA_k^{(i)}(\vx_{k} - \vx_{k - 1, i})$ in Eq.~\eqref{eq:gap-bound-nonsmooth}. 
\begin{restatable}{lemma}{secondInnerNonsmooth}
\label{lemma:second-inner-nonsmooth}
Under Assumption~\ref{assp:Lipschitz}, for any $k \in [K]$, the iterates $\{\vy_k^{(i)}\}_{i = 1}^m$ and $\{\vx_{k - 1, i}\}_{i = 1}^{m + 1}$ generated by Algorithm~\ref{alg:PD-shuffled-sgd} satisfy 
\begin{equation}\label{eq:second-inner-nonsmooth}
\begin{aligned}
    \gT_2 \leq \;& \frac{\eta_k^2\sqrt{\Hat{G}_{\pi^{(k)}}\Tilde{G}_{\pi^{(k)}}}}{b}\|\vy_k\|^2_{\mGamma_k^{-1}} + \frac{\eta_k^2\sqrt{\Hat{G}_{\pi^{(k)}}\Tilde{G}_{\pi^{(k)}}}}{4b}\|\vv_k - \vy_k\|^2_{\mGamma_k^{-1}}.
\end{aligned}
\end{equation}
\end{restatable}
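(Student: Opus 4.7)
The proof will follow the same template as Lemma~\ref{lemma:second-inner} (the smooth case), but without the decomposition $\vy_k = (\vy_k - \vy_{*,k}) + \vy_{*,k}$, since in the nonsmooth setting we do not need an optimal-solution-dependent variance term, and the strong convexity of $\ell_i^*$ is no longer available to help cancel anything.

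First I would use the primal update in Line~7 of Alg.~\ref{alg:PD-shuffled-sgd} to write $\vx_{k-1,i} - \vx_{k-1,i+1} = \frac{\eta_k}{b}\mA_k^{(i)\top}\vy_k^{(i)}$, and then telescope to obtain
\begin{equation*}
\vx_{k} - \vx_{k-1,i} = -\frac{\eta_k}{b}\mA_k^{\top}\mI_{b(i-1)\uparrow}\vy_k,
\qquad
\mA_k^{(i)\top}(\vv_k^{(i)} - \vy_k^{(i)}) = \mA_k^{\top}\mI_{(i)}(\vv_k - \vy_k),
\end{equation*}
exactly as in the smooth proof. Substituting into $\gT_2$ gives the single inner product form
$\gT_2 = -\frac{\eta_k^2}{bn}\sum_{i=1}^m \langle \mA_k^{\top}\mI_{b(i-1)\uparrow}\vy_k,\, \mA_k^{\top}\mI_{(i)}(\vv_k - \vy_k)\rangle$.

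Next I would apply Young's inequality with a parameter $\alpha>0$ to be chosen,
\begin{equation*}
\gT_2 \le \frac{\eta_k^2\alpha}{2bn}\sum_{i=1}^m\|\mA_k^{\top}\mI_{b(i-1)\uparrow}\vy_k\|_2^2 \;+\; \frac{\eta_k^2}{2bn\alpha}\sum_{i=1}^m\|\mA_k^{\top}\mI_{(i)}(\vv_k-\vy_k)\|_2^2,
\end{equation*}
and then bound each quadratic form in a Mahalanobis norm exactly as in~\eqref{eq:cyclic}: inserting $\mGamma_k^{-1/2}\mGamma_k^{1/2}$ on each side of the inner vector, Cauchy--Schwarz yields $\sum_{i=1}^m\|\mA_k^{\top}\mI_{b(i-1)\uparrow}\vy_k\|_2^2 \le \frac{n^2}{b}\Hat{G}_{\pi^{(k)}}\|\vy_k\|^2_{\mGamma_k^{-1}}$ by the definition of $\Hat{G}_\pi$ (where $m = n/b$), and similarly $\sum_{i=1}^m\|\mA_k^{\top}\mI_{(i)}(\vv_k - \vy_k)\|_2^2 \le b\,\Tilde{G}_{\pi^{(k)}}\|\vv_k - \vy_k\|^2_{\mGamma_k^{-1}}$ by the definition of $\Tilde{G}_\pi$.

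The only remaining step, and the only place balancing is needed, is the choice of the Young's parameter. Setting $\alpha = \tfrac{2b}{n}\sqrt{\Tilde{G}_{\pi^{(k)}}/\Hat{G}_{\pi^{(k)}}}$ turns the first coefficient into $\eta_k^2\sqrt{\Hat{G}_{\pi^{(k)}}\Tilde{G}_{\pi^{(k)}}}/b$ and the second into $\eta_k^2\sqrt{\Hat{G}_{\pi^{(k)}}\Tilde{G}_{\pi^{(k)}}}/(4b)$, which is exactly~\eqref{eq:second-inner-nonsmooth}. I do not expect any real obstacle here: unlike the smooth case, we never invoke Assumption~\ref{assp:bounded-vr} or the $\frac{1}{L_i}$-strong convexity of $\ell_i^*$, so there is no extra $\sigma_*$ residual to control; the Lipschitz assumption enters only later when $\|\vy_k\|_{\mGamma_k^{-1}}^2$ is bounded by a constant (via $|\vy_k^j|\le G_{\pi_j^{(k)}}$) in the subsequent proof of Theorem~\ref{thm:convergence-nonsmooth}.
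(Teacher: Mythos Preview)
Your proposal is correct and follows essentially the same approach as the paper's proof: rewrite $\gT_2$ as $-\frac{\eta_k^2}{bn}\sum_i\langle \mA_k^{\top}\mI_{b(i-1)\uparrow}\vy_k,\mA_k^{\top}\mI_{(i)}(\vv_k-\vy_k)\rangle$, apply Young's inequality, bound each sum of squares via the $\mGamma_k^{-1/2}\mGamma_k^{1/2}$ insertion and Cauchy--Schwarz (the analogue of Eq.~\eqref{eq:cyclic}), and then choose $\alpha=\tfrac{2b}{n}\sqrt{\Tilde{G}_{\pi^{(k)}}/\Hat{G}_{\pi^{(k)}}}$. Your observation that the $\vy_{*,k}$-decomposition is unnecessary here, and that Assumption~\ref{assp:Lipschitz} is only invoked downstream to bound $\|\vy_k\|_{\mGamma_k^{-1}}^2$, matches the paper exactly.
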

\begin{proof}
Proceeding as in Lemma~\ref{lemma:second-inner}, we have 
\begin{equation*}
\begin{aligned}
    \gT_2 := \frac{\eta_k}{n}\sum_{i = 1}^m \big(\vv^{(i)}_k - \vy_k^{(i)}\big)^{\top}\mA_k^{(i)}(\vx_{k} - \vx_{k - 1, i}) = -\frac{\eta_k^2}{bn}\sum_{i = 1}^m\innp{\mA_k^{\top}\mI_{b(i - 1)\uparrow}\vy_k, \mA_k^{\top}\mI_{(i)}(\vv_k - \vy_k)}.
\end{aligned}
\end{equation*}
Using Young's inequality for some $\alpha > 0$ and proceeding as in Eq.~\eqref{eq:cyclic}, we obtain 
\begin{equation*}
\begin{aligned}
    \gT_2 \leq \;& \frac{\eta_k^2\alpha}{2bn}\sum_{i = 1}^m\|\mA_k^{\top}\mI_{b(i - 1)\uparrow}\vy_k\|_2^2 + \frac{\eta_k^2}{2bn\alpha}\sum_{i = 1}^m\|\mA_k^{\top}\mI_{(i)}(\vv_k - \vy_k)\|_2^2 \\
    \leq \;& \frac{\eta_k^2 n\alpha}{2b^2}\Hat{G}_{\pi^{(k)}}\|\vy_k\|^2_{\mGamma_k^{-1}} + \frac{\eta_k^2}{2n\alpha}\Tilde{G}_{\pi^{(k)}}\|\vv_k - \vy_k\|^2_{\mGamma_k^{-1}}, 
\end{aligned}
\end{equation*}
where we use our definitions that $\Hat{G}_{\pi^{(k)}} := \frac{1}{mn}\big\|\mGamma_k^{1/2}\big(\textstyle\sum_{j=1}^m\mI_{b(j - 1)\uparrow} \mA_k\mA_k^{\top}\mI_{b(j - 1)\uparrow}\big)\mGamma_k^{1/2}\big\|_2$ and $\Tilde{G}_{\pi^{(k)}} := \frac{1}{b}\big\|\mGamma_k^{1/2}\big(\textstyle \sum_{j=1}^m\mI_{(j)} \mA_k\mA_k^{\top}\mI_{(j)}\big)\mGamma_k^{1/2}\big\|_2$.
It remains to choose $\alpha = \frac{2b}{n}\sqrt{\frac{\Tilde{G}_k}{\Hat{G}_k}}$ to finish the proof.
\end{proof}
We are now ready to prove Theorem~\ref{thm:convergence-nonsmooth} for the convergence of shuffled SGD in the convex nonsmooth Lipschitz settings.
\convergenceNonsmooth*
\begin{proof}
To simplify the presentation of our analysis, we first assume $\|\vv\|^2_{\mGamma^{-1}} \leq n$, which will be later verified by our choice of $\vv = \vy_{\Hat{\vx}_K}$ and Assumption~\ref{assp:Lipschitz}.

Combining the bounds in Lemma~\ref{lemma:first-inner}~and~\ref{lemma:second-inner-nonsmooth} and plugging them into Eq.~\eqref{eq:gap-bound-nonsmooth}, we have 
\begin{align}
    \gE_k \leq \;& \frac{\eta_k^2\sqrt{\Hat{G}_{\pi^{(k)}}\Tilde{G}_{\pi^{(k)}}}}{b}\|\vy_k\|^2_{\mGamma_k^{-1}} + \frac{\eta_k^2\sqrt{\Hat{G}_{\pi^{(k)}}\Tilde{G}_{\pi^{(k)}}}}{4b}\|\vv_k - \vy_k\|^2_{\mGamma_k^{-1}} \notag \\
    \overset{(\romannumeral1)}{\leq} \;& \frac{\eta_k^2\sqrt{\Hat{G}_{\pi^{(k)}}\Tilde{G}_{\pi^{(k)}}}}{b}\|\vy_k\|^2_{\mGamma_k^{-1}} + \frac{\eta_k^2\sqrt{\Hat{G}_{\pi^{(k)}}\Tilde{G}_{\pi^{(k)}}}}{2b}(\|\vv\|^2_{\mGamma^{-1}} + \|\vy_k\|^2_{\mGamma_k^{-1}}) \notag \\
    \overset{(\romannumeral2)}{\leq} \;& \frac{2\eta_k^2 n \sqrt{\Hat{G}_{\pi^{(k)}}\Tilde{G}_{\pi^{(k)}}}}{b}, \label{eq:fin-bnd-nonsmooth}
\end{align}
where we use Young's inequality for $\|\vv_k - \vy_k\|^2_{\mGamma_k^{-1}}$ and $\|\vv_k\|_{\mGamma_k^{-1}} = \|\vv\|^2_{\mGamma^{-1}}$ as $\vv$ is a fixed vector for $(\romannumeral1)$, and $(\romannumeral2)$ is due to $\|\vy_k\|^2_{\mGamma_k^{-1}} \leq n$ by Assumption~\ref{assp:Lipschitz} and assuming that $\|\vv\|^2_{\mGamma^{-1}} \leq n$. Proceeding as the proof for Theorem~\ref{thm:convergence}, we first assume the RR scheme and take conditional expectation w.r.t.\ the randomness up to but not including $k$-th epoch, then we obtain 
\begin{equation*}
    \E_k[\gE_k] \leq \frac{2\eta_k^2 n \E_k\big[\sqrt{\Hat{G}_{\pi^{(k)}}\Tilde{G}_{\pi^{(k)}}}\big]}{b}.
\end{equation*}
Since the randomness only comes from the random permutation $\pi^{(k)}$, we have
\begin{equation*}
    \E_k[\gE_k] \leq \frac{2\eta_k^2 n \E_{\pi}[\sqrt{\Hat{G}_\pi\Tilde{G}_\pi}]}{b}.
\end{equation*}
For notational convenience, we denote $\Bar{G} = \E_\pi[\sqrt{\Hat{G}_\pi\Tilde{G}_\pi}]$, and further take expectation w.r.t.\ all the randomness on both sides and use the law of total expectation to obtain 
\begin{equation}\label{eq:error-bnd-nonsmooth}
    \E[\gE_k] \leq \frac{2\eta_k^2 n \Bar{G}}{b}.
\end{equation}
For the SO scheme, there is one random permutation $\pi$ generated at the very beginning such that $\pi^{(k)} = \pi$ for all $k \in [K]$. So we can directly take expectation w.r.t.\ all the randomness on both sides of Eq.~\eqref{eq:fin-bnd-nonsmooth}, with the randomness only from $\pi$, which leads to the same bound as Eq.~\eqref{eq:error-bnd-nonsmooth} with $\E\big[\sqrt{\Hat{G}_{\pi^{(k)}}\Tilde{G}_{\pi^{(k)}}}\big] = \E_\pi\big[\sqrt{\Hat{G}_{\pi}\Tilde{G}_{\pi}}\big]$. Note that for incremental gradient (IG) descent, we can let $\Bar{G} = \sqrt{\Hat{G}_0\Tilde{G}_0}$ without randomness involved, where $\Hat{G}_0 = \hat{G}_{\pi^{(0)}}$ and $\Tilde{G}_0 = \Tilde{G}_{\pi^{(0)}}$ w.r.t.\ the initial, fixed permutation $\pi^{(0)}$ of the data matrix $\mA$.

Noticing that $\gE_k = \eta_k\text{Gap}^\vv(\vx_{k}, \vy_*) + \frac{b}{2n}\|\vx_* - \vx_{k}\|_2^2 - \frac{b}{2n} \|\vx_* - \vx_{k - 1}\|_2^2$ and telescoping from $k = 1$ to $K$, we have 
\begin{equation*}
\begin{aligned}
    \E\Big[\sum_{k = 1}^K\eta_k\text{Gap}^\vv(\vx_{k}, \vy_*)\Big] \leq \;& \frac{b}{2n}\|\vx_* - \vx_0\|_2^2 - \frac{b}{2n}\E[\|\vx_* - \vx_{K}\|_2^2] + \sum_{k = 1}^K\frac{2\eta_k^2 n \Bar{G}}{b}.
\end{aligned}
\end{equation*}
Noticing that $\gL(\vx, \vv)$ is convex wrt $\vx$, we have $\text{Gap}^\vv(\Hat\vx_{K}, \vy_*) \leq \sum_{k = 1}^K\eta_k\text{Gap}^\vv(\vx_{k}, \vy_*) / H_K$, where $\Hat{\vx}_K = \sum_{k=1}^K \eta_k \vx_{k} / H_K$ and $H_K = \sum_{k = 1}^K\eta_k$, so we obtain 
\begin{equation*}
    \E\Big[H_K\text{Gap}^\vv(\Hat\vx_{K}, \vy_*)\Big] \leq \frac{b}{2n}\|\vx_0 - \vx_*\|_2^2 + \sum_{k = 1}^K\frac{2\eta_k^2 n \Bar{G}}{b}.
\end{equation*}
Further choosing $\vv = \vy_{\Hat{\vx}_K}$, which also verifies $\|\vv\|^2_{\mGamma^{-1}} = \|\vy_{\Hat{\vx}_K}\|^2_{\mGamma^{-1}} \leq n$ by Assumption~\ref{assp:Lipschitz}, we obtain 
\begin{equation*}
    \E[H_K(f(\Hat{\vx}_K) - f(\vx_*))] \leq \frac{b}{2n}\|\vx_0 - \vx_*\|_2^2 + \sum_{k = 1}^K\frac{2\eta_k^2 n \Bar{G}}{b}. 
\end{equation*}
To analyze the individual gradient oracle complexity, we choose constant stepsize $\eta$. Then,  the above bound  becomes  
\begin{equation*}
    \E[f(\Hat{\vx}_K) - f(\vx_*)] \leq \frac{b}{2n\eta K}\|\vx_0 - \vx_*\|_2^2 + \frac{2 n \eta\Bar{G}}{b}. 
\end{equation*}
Choosing $\eta = \frac{b\|\vx_0 - \vx_*\|_2}{2n\sqrt{K\Bar{G}}}$, we have 
\begin{equation*}
    \E[f(\Hat{\vx}_K) - f(\vx_*)] \leq \frac{2\sqrt{\Bar{G}}\|\vx_0 - \vx_*\|_2}{\sqrt{K}}. 
\end{equation*}
Hence, given $\epsilon > 0$, to ensure $\E[f(\Hat{\vx}_K) - f(\vx_*)] \leq \epsilon$, the total number of individual gradient evaluations will be 
\begin{equation*}
    nK \geq \frac{4n\Bar{G}\|\vx_0 - \vx_*\|_2^2}{\epsilon^2}, 
\end{equation*}
thus completing the proof. 
\end{proof}

\section{General Smooth Convex Finite-Sum Problems}\label{appx:general}
In this section, we consider the more general setting beyond generalized linear models where we only assume that each component function $f_i$ is convex and smooth, and derive the refined analysis under this setting. {Here we focus on the smooth convex problems as prior work did~\citep{mishchenko2020random,nguyen2021unified}, since smoothness is essential to showing the advantage of shuffled SGD~\citep{nagaraj2019sgd} over SGD, otherwise the rate of SGD is optimal.} In particular, we study the general smooth convex finite-sum problem~\eqref{eq:general-prob}
\begin{align}
    \min_{\vx \in \R^d} \Big\{f(\vx) := \frac{1}{n}\sum_{i = 1}^n f_i(\vx)\Big\}, \tag{\ref{eq:general-prob}}
\end{align}
where each $f_i$ is convex and smooth.\ \eqref{eq:general-prob} is equivalent to 
\begin{align}
    \min_{\vx \in \R^d}\max_{\vy \in \R^{nd}}\Big\{\gL(\vx, \vy) := \frac{1}{n}\sum_{i = 1}^n\Big(\innp{\vy^{i}, \vx} - f_i^*(\vy^{i})\Big) = \frac{1}{n}\innp{\mE \vx, \vy} - \frac{1}{n}\sum_{i = 1}^n f_i^*(\vy^{i})\Big\}, \tag{P-General}
\end{align}
where we slightly abuse the notation in this section and use $\vy^{i} \in \R^d$ to be the $i$-th $d$ elements of the vector $\vy$ such that $\vy = (\vy^1, \dots, \vy^n)^\top \in \R^{nd}$, $\mE = [\underbrace{\mI_d, \dots, \mI_d}_n]^\top \in \R^{nd \times d}$ is the vertical concatenation of $n$ identity matrices $\mI_d \in \R^{d \times d}$ and $f_i^*$ is the convex conjugate of $f_i$ defined by $f_i(\vx) = \sup_{\vy^{i} \in \R^d}\innp{\vy^{i}, \vx} - f_i^*(\vy^{i})$.

Throughout this section, we use the following assumptions. 
\begin{assumption}\label{assp:fi}
Each individual loss $f_i$ is convex and $L_i$-smooth, and there exists a minimizer $\vx_* \in \R^d$ for $f(\vx)$.  
\end{assumption}
This assumption implies that $f$ and all component functions $f_i$ are $L$-smooth, where $L := \max_{i \in [n]}L_i$. Assumption~\ref{assp:fi} also implies that each convex conjugate $f_i^*$ is $\frac{1}{L_i}$-strongly convex~\citep{beck2017first}. Also, we let $\mLambda = \mathrm{diag}(\underbrace{L_1, \dots, L_1}_{d}, \dots, \underbrace{L_n, \dots, L_n}_{d}) \in \R^{nd \times nd}$. We further assume the bounded variance at $\vx_*$, same as prior work~\citep{mishchenko2020random, nguyen2021unified}.
\begin{assumption}\label{assp:vr-fi}
The quantity $\sigma_*^2 = \frac{1}{n}\sum_{i = 1}^n\|\nabla f_i(\vx_*)\|^2$ is bounded.
\end{assumption}
Given a primal-dual pair $(\vx, \vy) \in \R^d \times \R^{nd}$, we define the primal-dual gap as in Section~\ref{sec:prelim}:
\begin{align*}
    \mathrm{Gap}(\vx, \vy) = \max_{(\vu, \vv) \in \R^d \times \R^{nd}}\big\{\gL(\vx, \vv) - \gL(\vu, \vy)\big\}. 
\end{align*}
In particular, we consider the pair $(\vx, \vy_*)$ and focus on bounding 
\begin{align*}
    \mathrm{Gap}^{\vv}(\vx, \vy_*) = \gL(\vx, \vv) - \gL(\vx_*, \vy_*). 
\end{align*}
To reduce this quantity to the function value gap $f(\vx) - f(\vx_*)$, it suffices to take $\vv = \vy_\vx$. 

In the following, we consider the mini-bath estimator of batch size $b$, and let $\vy^{(i)} \in \R^{bd}$ denote the vector comprised of the $i$\textsuperscript{th} $bd$ elements of $\vy$. For simplicity and without loss of generality, we assume that $n = bm$ for some positive integer $m$, so that $\vy = (\vy^{(1)}, \dots, \vy^{(m)})^\top$. Note that if choosing $b = 1$, our setting is the same as the ones in~\citet{mishchenko2020random, nguyen2021unified}. 
Then we have the primal-dual view of shuffled SGD scheme for general smooth convex minimization as in Alg.~\ref{alg:PD-shuffled-sgd-general}, 
\begin{algorithm}[t!]
\caption{Shuffled SGD (Primal-Dual View, General Convex Smooth)}\label{alg:PD-shuffled-sgd-general}
\begin{algorithmic}[1]
    \STATE \textbf{Input:} Initial point $\vx_0 \in \R^d,$ batch size $b > 0,$ step size $\{\eta_k\} > 0,$ number of epochs $K > 0$
    \FOR {$k = 1$ to $K$}
        \STATE Generate any permutation $\pi^{(k)}$ of $[n]$ (either deterministic or random)
        \STATE $\vx_{k - 1, 1} = \vx_{k - 1}$
        \FOR {$i = 1$ to $m$}
            \STATE $\vy_k^{(i)} = \argmax_{\vy^{(i)} \in \R^{bd}} \Big\{\innp{\mE^\top_b\vy^{(i)}, \vx_{k - 1, i}} - \sum_{j = 1}^b f_{\pi_{b(i - 1) + j}^{(k)}}^{*}(\vy^{j})\Big\}$ \hfill \label{algo-line:dual-update-general} \;
            \STATE $\vx_{k - 1, i + 1} = \argmin_{\vx \in \R^d}\Big\{\innp{\mE^\top_b\vy_k^{(i)}, \vx} + \frac{b}{2\eta_k}\|\vx - \vx_{k - 1, i}\|^2\Big\}$
        \ENDFOR
        \STATE $\vx_{k} = \vx_{k - 1, m + 1}$, $\vy_k = \big(\vy_k^{(1)}, \vy_k^{(2)}, \dots, \vy_k^{(m)}\big)^{\top}$\;
    \ENDFOR
    \STATE \textbf{Return:} $\Hat{\vx}_K = \sum_{k=1}^K \eta_k \vx_{k} / \sum_{k = 1}^K \eta_k$
\end{algorithmic}
\end{algorithm}
where $\mE^\top_b = [\underbrace{\mI_{d}, \dots, \mI_{d}}_b]^\top \in \R^{bd \times d}$ is the vertical concatenation of $b$ identity matrices $\mI_{d} \in \R^{d \times d}$. Given the data permutation $\pi^{(k)} = \{\pi_1^{(k)}, \pi_2^{(k)}, \dots, \pi_n^{(k)}\}$ of $[n]$ at the $k$-th epoch, we use the same notation of $\vv_k = (\vv^{\pi_1^{(k)}}, \dots, \vv^{\pi_n^{(k)}})^\top \in \R^{nd}$, $\vy_{*, k} = (\vy_*^{\pi_1^{(k)}}, \dots, \vy_*^{\pi_n^{(k)}})^\top \in \R^{nd}$ as in previous sections except now each $\vv^{\pi_i^{(k)}}, \vy_*^{\pi_i^{(k)}}$ are $d$-dimensional subvectors. Further, we denote the permuted smoothness constant matrices by $\mLambda_k = \mathrm{diag}(\underbrace{L_{\pi_1^{(k)}}, \dots, L_{\pi_1^{(k)}}}_{d}, \dots, \underbrace{L_{\pi_n^{(k)}}, \dots, L_{\pi_n^{(k)}}}_{d}) \in \R^{nd \times nd}$, and we use $\mI$ for $\mI_{nd} \in \R^{nd \times nd}$ throughout this section.

\paragraph{New smoothness constants and comparisons.} We define the new smoothness constants for any permutation $\pi$ of $[n]$, similar to Eq.~\eqref{eq:new-smoothness-constants}, but now for general smooth convex minimization.
\begin{equation}\label{eq:new-smoothness-constants-general}
\begin{aligned}
    \Hat{L}_\pi^{g} &:= \frac{1}{mn}\big\|\mLambda^{1/2}_\pi\big(\textstyle\sum_{i=1}^m\mI_{bd(i - 1)\uparrow} \mE\mE^{\top}\mI_{bd(i - 1)\uparrow}\big)\mLambda_\pi^{1/2}\big\|_2,
    \quad \;& \Hat{L}^g = \max_{\pi}\Hat{L}_{\pi}^g, \\
    \Tilde{L}_\pi^g &:= \frac{1}{b}\big\|\mLambda_\pi^{1/2}\big(\textstyle\sum_{i=1}^m\mI_{(di)} \mE\mE^{\top}\mI_{(di)}\big)\mLambda_\pi^{1/2}\big\|_2, \quad \;& \Tilde{L}^g = \max_{\pi}\Tilde{L}_{\pi}^g, 
\end{aligned}
\end{equation}
where $\mI_{(di)} = \sum_{j = bd(i - 1) + 1}^{bdi}\mI_{j}$.

To compare $\hat L_\pi^g$ and $L := \max_{i \in [n]}L_i$, we make use of the Kronecker product with notation $\otimes$ defined by 
\begin{align*}
    \mA \otimes \mB = \begin{bmatrix}
        A_{11}\mB & \cdots & A_{1n}\mB \\
        \vdots & \ddots & \vdots \\
        A_{m1}\mB & \cdots & A_{nn}\mB
    \end{bmatrix}
\end{align*}
for two matrices $\mA \in \R^{m \times n}$ and $\mB \in \R^{p \times q}$. The following lemma states a useful fact for the Kronecker product. 
\begin{lemma}\label{lem:eig-kro}
For square matrices $\mA$ and $\mB$ of sizes $p$ and $q$ and with eigenvalues $\lambda_i$ ($i \in [p]$) and $\mu_j$ ($j \in [q]$) respectively, the eigenvalues of $\mA \otimes \mB$ are $\lambda_i\mu_j$ for $i \in [p], j \in [q]$. 
\end{lemma}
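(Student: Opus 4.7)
The plan is to prove this classical fact via the Schur decomposition together with the mixed-product property of the Kronecker product. The first step is to recall (or briefly verify by block expansion) the identity
\begin{equation*}
(\mA \mC) \otimes (\mB \mD) = (\mA \otimes \mB)(\mC \otimes \mD),
\end{equation*}
which holds whenever the matrix products $\mA\mC$ and $\mB\mD$ are defined. A direct consequence is $(\mU_1 \otimes \mU_2)^{*} = \mU_1^{*} \otimes \mU_2^{*}$, and hence $\mU_1 \otimes \mU_2$ is unitary whenever $\mU_1$ and $\mU_2$ are.

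Next, I would invoke the Schur decomposition over $\mathbb{C}$ to write $\mA = \mU_1 \mT_1 \mU_1^{*}$ and $\mB = \mU_2 \mT_2 \mU_2^{*}$, where $\mU_1, \mU_2$ are unitary and $\mT_1, \mT_2$ are upper triangular with the eigenvalues $\lambda_1, \ldots, \lambda_p$ and $\mu_1, \ldots, \mu_q$ (counted with multiplicity) on their respective diagonals. Applying the mixed-product identity twice then yields
\begin{equation*}
\mA \otimes \mB = (\mU_1 \otimes \mU_2)(\mT_1 \otimes \mT_2)(\mU_1 \otimes \mU_2)^{*},
\end{equation*}
exhibiting a unitary similarity between $\mA \otimes \mB$ and $\mT_1 \otimes \mT_2$. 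In particular, the two matrices have the same spectrum, with multiplicities.

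Finally, I would observe that by the block definition of $\otimes$, the matrix $\mT_1 \otimes \mT_2$ is block upper triangular with diagonal blocks $(\mT_1)_{ii}\mT_2$ for $i \in [p]$; each such diagonal block is itself upper triangular with diagonal entries $\lambda_i \mu_1, \ldots, \lambda_i \mu_q$. Consequently $\mT_1 \otimes \mT_2$ is upper triangular with diagonal entries exactly the $pq$ products $\{\lambda_i \mu_j : i \in [p],\, j \in [q]\}$, and these are therefore its eigenvalues (with multiplicity). Combining this with the similarity above gives the claim. I do not anticipate any real obstacle here, as every step is either a standard decomposition or a direct block computation; the only point requiring any care is bookkeeping multiplicities, which is handled automatically by preserving the diagonals of the triangular factors.
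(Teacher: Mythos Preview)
Your proof via Schur decomposition and the mixed-product property is correct and complete. Note that the paper itself does not prove this lemma; it is stated there as a well-known fact about the Kronecker product and used without proof, so there is no paper argument to compare against.
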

We now use the following chain of inequalities to compare $\hat L_\pi^g$ and $L$ for any permutation $\pi$ of $[n]$:  
\begin{align*}
    \hat L_\pi^g = \frac{1}{mn}\Big\|\mLambda_\pi^{1/2}\Big(\sum_{i = 1}^m\mI_{bd(i - 1)\uparrow}\mE\mE^\top\mI_{bd(i - 1)\uparrow}\Big)\mLambda_\pi^{1/2}\Big\|_2 \leq \frac{1}{n}\Big\|\mLambda^{1/2}\mE\mE^\top\mLambda^{1/2}\Big\|_2 = \frac{1}{n}\Big\|(\vl_\pi\vl_\pi^\top) \otimes \mI_d\Big\|_2
    \overset{(\romannumeral1)}{=} \;& \frac{1}{n}\sum_{i = 1}^n L_i \leq L, 
\end{align*}
where we define $\vl_\pi = \big(\sqrt{L_{\pi_1}}, \sqrt{L_{\pi_2}}, \dots, \sqrt{L_{\pi_n}}\big)^\top$. For $(\romannumeral1)$, we use Lemma~\ref{lem:eig-kro} and notice that the eigenvalues of $\mI_d$ all equal $1$, while the largest eigenvalue of $\vl_\pi\vl_\pi^\top = \|\vl\|_2^2 = \sum_{i = 1}^n L_i$, so the operator norm of $(\vl_k\vl_k^\top) \otimes \mI_d$ is $\sum_{i = 1}^n L_i$. 

To compare $\tilde L_\pi^g$ and $L$, we notice that $\mLambda_\pi^{1/2}\big(\textstyle\sum_{i=1}^m\mI_{(di)} \mE\mE^{\top}\mI_{(di)}\big)\mLambda_\pi^{1/2} = \textstyle\sum_{i=1}^m\mI_{(di)}\mLambda_\pi^{1/2}\mE\mE^{\top}\mLambda_\pi^{1/2}\mI_{(di)}$ is a block diagonal matrix whose operator norm is the maximum of the operator norms over its diagonal block submatrices, so we have 
\begin{align*}
    \tilde L_\pi^g = \frac{1}{b}\max_{i \in [m]}\Big\|\mI_{(di)}\mLambda_\pi^{1/2}\mE\mE^{\top}\mLambda_\pi^{1/2}\mI_{(di)}\Big\| = \frac{1}{b}\max_{i \in [m]}\Big\|\mI_{(di)}\big((\vl_\pi\vl_\pi^\top) \otimes \mI_d\big)\mI_{(di)}\Big\| \overset{(\romannumeral1)}{=} \max_{i \in [m]}\frac{1}{b}\sum_{j = 1}^b L_{\pi_{b(i - 1) + j}} \leq L, 
\end{align*}
where for $(\romannumeral1)$ we use Lemma~\ref{lem:eig-kro} for each submatrix $(\vl_\pi^{(i)}\vl_\pi^{(i) \top}) \otimes \mI_d$ and $\vl_\pi^{(i)} = (0, \dots, 0, \sqrt{L_{\pi_{b(i - 1) + 1}}}, \dots, \sqrt{L_{\pi_{bi}}}, 0, \dots, 0)^\top$. Similar to the case of generalized linear models, the inequality is tight when $b = 1$ but can be loose for other values of $b$. 

To prove the convergence rate for shuffled SGD in this general setting, we follow the proof strategy in Section~\ref{sec:smooth} and first derive an upper bound $\mathrm{Gap}^\vv(\vx_k, \vy_*)$ for some fixed $\vv$ to be set later, as summarized in the following lemma.
\begin{restatable}{lemma}{gapBound-general}
\label{lemma:gap-bound-general}
Under Assumption~\ref{assp:fi}, for any $k \in [K]$, the iterates $\{\vy_k^{(i)}\}_{i = 1}^m$ and $\{\vx_{k - 1, i}\}_{i = 1}^{m + 1}$ generated by Algorithm~\ref{alg:PD-shuffled-sgd-general} satisfy 
\begin{equation}\label{eq:gap-bound-general}
\begin{aligned}
    \gE_k \leq \;& \frac{\eta_k}{n}\sum_{i = 1}^m\innp{\mE_b^\top \vy_k^{(i)}, \vx_{k} - \vx_{k - 1, i + 1}} + \frac{\eta_k}{n}\sum_{i = 1}^m\innp{\mE_b^\top\big(\vv_k^{(i)} - \vy_k^{(i)}\big), \vx_k - \vx_{k - 1, i}} \\
    & - \frac{\eta_k}{2n}\|\vy_k - \vv_k\|_{\mLambda_k^{-1}}^2 - \frac{\eta_k}{2n}\|\vy_k - \vy_{*, k}\|_{\mLambda_k^{-1}}^2 - \frac{b}{2n}\sum_{i = 1}^m\|\vx_{k - 1, i + 1} - \vx_{k - 1, i}\|^2, 
\end{aligned}
\end{equation}
where $\gE_k := \eta_k \mathrm{Gap}^\vv(\vx_k, \vy_*)
+ \frac{b}{2n}\|\vx_* - \vx_{k}\|_2^2 - \frac{b}{2n} \|\vx_* - \vx_{k - 1}\|_2^2$. 
\end{restatable}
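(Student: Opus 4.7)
} The plan is to mirror the argument used for Lemma~\ref{lemma:gap-bound}, with scalar dual coordinates $\vy^i \in \R$ replaced by vector-valued dual subblocks $\vy^i \in \R^d$, and the data matrix $\mA$ replaced by the identity-stack matrix $\mE$. First I would use the dual update in Line~\ref{algo-line:dual-update-general} of Algorithm~\ref{alg:PD-shuffled-sgd-general}. Because the objective $\innp{\mE_b^\top \vy^{(i)}, \vx_{k-1,i}} - \sum_{j=1}^b f^*_{\pi_{b(i-1)+j}^{(k)}}(\vy^j)$ is separable across the $b$ subblocks of size $d$, the maximizer satisfies $\vx_{k-1,i} \in \partial f^*_{\pi_j^{(k)}}(\vy_k^j)$ for each $j \in \{b(i-1)+1,\ldots,bi\}$. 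Since Assumption~\ref{assp:fi} guarantees that each $f_j^*$ is $\frac{1}{L_j}$-strongly convex, Lemma~\ref{lemma:strong-convex} then yields $f^*_{\pi_j^{(k)}}(\vv_k^j) \geq f^*_{\pi_j^{(k)}}(\vy_k^j) + \innp{\vx_{k-1,i}, \vv_k^j - \vy_k^j} + \frac{1}{2L_{\pi_j^{(k)}}}\|\vv_k^j - \vy_k^j\|^2$. Summing over $j$, adding and subtracting $\innp{\mE_b^\top \vv_k^{(i)}, \vx_k - \vx_{k-1,i}}$, and using the definition of $\gL$, I obtain the upper bound
\[
\gL(\vx_k, \vv) \leq \tfrac{1}{n}\textstyle\sum_{i=1}^m\bigl(\innp{\mE_b^\top \vy_k^{(i)}, \vx_{k-1,i}} - \textstyle\sum_{j=b(i-1)+1}^{bi} f^*_{\pi_j^{(k)}}(\vy_k^j)\bigr) + \tfrac{1}{n}\sum_{i=1}^m \innp{\mE_b^\top \vv_k^{(i)}, \vx_k - \vx_{k-1,i}} - \tfrac{1}{2n}\|\vy_k - \vv_k\|_{\mLambda_k^{-1}}^2 .
\]

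Next I would derive a lower bound on $\gL(\vx_*, \vy_*)$. By the first-order optimality for $\vx_*$, we have $\vx_* \in \partial f_j^*(\vy_*^j)$ for every $j \in [n]$; applying strong convexity of $f_j^*$ again gives $f^*_{\pi_j^{(k)}}(\vy_k^j) \geq f^*_{\pi_j^{(k)}}(\vy_{*,k}^j) + \innp{\vx_*, \vy_k^j - \vy_{*,k}^j} + \frac{1}{2L_{\pi_j^{(k)}}}\|\vy_k^j - \vy_{*,k}^j\|^2$, and summing produces a first lower bound on $\gL(\vx_*, \vy_*)$ featuring the term $\frac{1}{2n}\|\vy_k - \vy_{*,k}\|_{\mLambda_k^{-1}}^2$. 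I would then add and subtract $\frac{b}{2n\eta_k}\sum_{i=1}^m\|\vx_* - \vx_{k-1,i}\|^2$ so as to invoke the primal update in Line~7: defining $\phi_k^{(i)}(\vx) := \innp{\mE_b^\top \vy_k^{(i)}, \vx} + \frac{b}{2\eta_k}\|\vx - \vx_{k-1,i}\|^2$, which is $\frac{b}{\eta_k}$-strongly convex with minimizer $\vx_{k-1,i+1}$, the inequality $\phi_k^{(i)}(\vx_*) \geq \phi_k^{(i)}(\vx_{k-1,i+1}) + \frac{b}{2\eta_k}\|\vx_* - \vx_{k-1,i+1}\|^2$ converts the $\vx_*$-evaluation into a $\vx_{k-1,i+1}$-evaluation plus a quadratic retraction. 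A telescoping identity in $i$ collapses the displacement terms $\|\vx_* - \vx_{k-1,i+1}\|^2 - \|\vx_* - \vx_{k-1,i}\|^2$ into $\|\vx_k - \vx_*\|^2 - \|\vx_{k-1} - \vx_*\|^2$.

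Finally, I would subtract the lower bound on $\gL(\vx_*, \vy_*)$ from the upper bound on $\gL(\vx_k, \vv)$, multiply by $\eta_k$, and move the $\frac{b}{2n}$ terms to the left to form $\gE_k$. The residual inner product $\innp{\mE_b^\top \vy_k^{(i)}, \vx_{k-1,i} - \vx_{k-1,i+1}}$ can be rewritten as $\innp{\mE_b^\top \vy_k^{(i)}, \vx_k - \vx_{k-1,i+1}} - \innp{\mE_b^\top \vy_k^{(i)}, \vx_k - \vx_{k-1,i}}$, and combining with the $\vv_k$-inner product yields the two inner product contributions in the claimed bound~\eqref{eq:gap-bound-general}, while the retraction terms $\frac{b}{2n}\sum_i \|\vx_{k-1,i+1} - \vx_{k-1,i}\|^2$ and the two Mahalanobis squared-norm terms appear with the correct signs.

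I expect no serious obstacle: the proof is a direct structural analogue of the one for Lemma~\ref{lemma:gap-bound}. The only care needed is bookkeeping with the matrix $\mE$ and block-indexing in $\R^{nd}$, specifically ensuring that the decomposability of the dual update across blocks of width $d$ (rather than width $1$) is properly leveraged, and that $\mLambda_k$ is the block-diagonal smoothness matrix repeating each $L_{\pi_j^{(k)}}$ exactly $d$ times so that $\|\vy_k - \vv_k\|_{\mLambda_k^{-1}}^2 = \sum_{j=1}^n \frac{1}{L_{\pi_j^{(k)}}}\|\vy_k^j - \vv_k^j\|^2$ matches the per-block strong convexity constants.
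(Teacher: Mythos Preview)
Your proposal is correct and follows essentially the same approach as the paper's proof: both use separability of the dual update together with $\frac{1}{L_j}$-strong convexity of $f_j^*$ to upper bound $\gL(\vx_k,\vv)$ and lower bound $\gL(\vx_*,\vy_*)$, then invoke strong convexity of the primal quadratic $\phi_k^{(i)}$ at its minimizer $\vx_{k-1,i+1}$, telescope, and perform the same add--subtract rearrangement of the inner products to reach~\eqref{eq:gap-bound-general}. Your bookkeeping remarks about $\mE$, block indexing in $\R^{nd}$, and the block-diagonal structure of $\mLambda_k$ are exactly the points the paper handles implicitly.
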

\begin{proof}
We first note that based on Line 6
% ~\ref{algo-line:dual-update-general} 
of Alg.~\ref{alg:PD-shuffled-sgd-general}, we have 
\begin{align*}
    \innp{\mE^\top_b\vy^{(i)}, \vx_{k - 1, i}} - \sum_{j = 1}^b f_{\pi_{b(i - 1) + j}^{(k)}}^{*}(\vy^{j}) = \sum_{j = 1}^b\Big(\innp{\vy^j, \vx_{k - 1, i}} - f_{\pi_{b(i - 1) + j}^{(k)}}^{*}(\vy^{j})\Big).
\end{align*}
Since the max problem defining $\vy_k$ is separable, we have for $b(i - 1) + 1 \leq j \leq bi$ and $i \in [m]$
\begin{align*}
    \vy_k^j = \argmax_{\vy^j \in \R^d}\Big\{\innp{\vy^j, \vx_{k - 1, i}} - f_{\pi_{j}^{(k)}}^{*}(\vy^{j})\Big\}, 
\end{align*}
which leads to $\vx_{k - 1, i} \in \partial f_{\pi_j^{(k)}}^*(\vy_k^j)$. 
Further, since each component function $f_j^*$ is $\frac{1}{L_j}$-strongly convex thus for $b(i - 1) + 1 \leq j \leq bi$, we also have
\begin{align*}
    f_{\pi_j^{(k)}}^*(\vv_k^{j}) \geq f_{\pi_j^{(k)}}^*(\vy_k^{j}) + \innp{\vx_{k - 1, i}, \vv_k^{j} - \vy_k^{j}} + \frac{1}{2L_{\pi_j^{(k)}}}\|\vv_k^{j} - \vy_k^{j}\|^2, 
\end{align*}
which leads to 
\begin{align*}
    \gL(\vx_k, \vv) = \;& \frac{1}{n}\sum_{i = 1}^m\Big(\innp{\mE_b^\top\vv_k^{(i)}, \vx_{k - 1, i}} - \sum_{j = b(i - 1) + 1}^{bi} f_{\pi_{j}^{(k)}}^*(\vv_k^{j})\Big) + \frac{1}{n}\sum_{i = 1}^m\innp{\mE_b^\top\vv_k^{(i)}, \vx_k - \vx_{k - 1, i}} \\
    \leq \;& \frac{1}{n}\sum_{i = 1}^m\Big(\innp{\mE_b^\top\vy_k^{(i)}, \vx_{k - 1, i}} - \sum_{j = b(i - 1) + 1}^{bi} f_{\pi_j^{(k)}}^*(\vy_k^{j})\Big) + \frac{1}{n}\sum_{i = 1}^m\innp{\mE_b^\top\vv_k^{(i)}, \vx_k - \vx_{k - 1, i}} - \frac{1}{2n}\|\vy_k - \vv_k\|_{\mLambda_k^{-1}}^2.
\end{align*}
Using the same argument, as $\vx_* \in \partial f_i^*(\vy_*^{i})$ for $i \in [n]$, we have 
\begin{align*}
    f_{\pi_i^{(k)}}^*(\vy_k^{i}) \geq f_{\pi_i^{(k)}}^*(\vy_{*, k}^{i}) + \innp{\vx_*, \vy_k^{i} - \vy_{*, k}^{i}} + \frac{1}{2L_{\pi_i^{(k)}}}\|\vy_k^{i} - \vy_{*, k}^{i}\|^2. 
\end{align*}
Thus, 
\begin{align*}
    \;& \gL(\vx_*, \vy_*) \\
    = \;& \frac{1}{n}\sum_{i = 1}^m\Big(\innp{\mE_b^\top\vy_{*, k}^{(i)}, \vx_*} - \sum_{j = b(i - 1) + 1}^{bi}f_{\pi_j^{(k)}}^*(\vy_{*, k}^{j})\Big) \\
    \geq \;& \frac{1}{n}\sum_{i = 1}^m\Big(\innp{\mE_b^\top\vy_k^{(i)}, \vx_*} - \sum_{j = b(i - 1) + 1}^{bi}f_{\pi_j^{(k)}}^*(\vy_k^{j})\Big) + \frac{1}{2n}\|\vy_k - \vy_{*, k}\|_{\mLambda_k^{-1}}^2 \\
    = \;& \frac{1}{n}\sum_{i = 1}^m\Big(\innp{\mE_b^\top\vy_k^{(i)}, \vx_*} + \frac{b}{2\eta_k}\|\vx_* - \vx_{k - 1, i}\|^2 - \frac{b}{2\eta_k}\|\vx_* - \vx_{k - 1, i}\|^2 - \sum_{j = b(i - 1) + 1}^{bi}f_{\pi_j^{(k)}}^*(\vy_k^{j})\Big) + \frac{1}{2n}\|\vy_k - \vy_{*, k}\|_{\mLambda_k^{-1}}^2.
\end{align*}
Using the updating scheme of $\vx_{k - 1, i + 1}$ and noticing that $\phi_k^i(\vx) = \innp{\mE_b^\top\vy_k^{(i)}, \vx} + \frac{b}{2\eta_k}\|\vx - \vx_{k - 1, i}\|^2$ is $\frac{b}{\eta_k}$-strongly convex and minimized at $\vx_{k - 1, i + 1}$, we have 
\begin{align*}
    \innp{\mE_b^\top\vy_k^{(i)}, \vx_*} + \frac{b}{2\eta_k}\|\vx_* - \vx_{k - 1, i}\|^2 \geq \innp{\mE_b^\top\vy_k^{(i)}, \vx_{k - 1, i + 1}} + \frac{b}{2\eta_k}\|\vx_{k - 1, i + 1} - \vx_{k - 1, i}\|^2 + \frac{b}{2\eta_k}\|\vx_{k - 1, i + 1} - \vx_*\|^2,
\end{align*}
which leads to 
\begin{align*}
    \gL(\vx_*, \vy_*) \geq \;& \frac{1}{n}\sum_{i = 1}^m\Big(\innp{\mE_b^\top\vy_k^{(i)}, \vx_{k - 1, i + 1}} + \frac{b}{2\eta_k}\|\vx_{k - 1, i + 1} - \vx_{k - 1, i}\|^2 - \sum_{j = b(i - 1) + 1}^{bi}f_{\pi_j^{(k)}}^*(\vy_k^{i})\Big) \\
    & + \frac{b}{2n\eta_k}\sum_{i = 1}^m\Big(\|\vx_{k - 1, i + 1} - \vx_*\|^2 -\|\vx_{k - 1, i} - \vx_*\|^2\Big) + \frac{1}{2n}\|\vy_k - \vy_{*, k}\|_{\mLambda_k^{-1}}^2 \\
    = \;& \frac{1}{n}\sum_{i = 1}^m\Big(\innp{\mE_b^\top\vy_k^{(i)}, \vx_{k - 1, i + 1}} + \frac{b}{2\eta_k}\|\vx_{k - 1, i + 1} - \vx_{k - 1, i}\|^2 - \sum_{j = b(i - 1) + 1}^{bi}f_{\pi_j^{(k)}}^*(\vy_k^{j})\Big) \\
    & + \frac{b}{2n\eta_k}\Big(\|\vx_{k} - \vx_*\|^2 - \|\vx_{k - 1} - \vx_*\|^2\Big) + \frac{1}{2n}\|\vy_k - \vy_{*, k}\|_{\mLambda_k^{-1}}^2.
\end{align*}
Hence, combining the bounds on $\gL(\vx_k, \vv)$ and $\gL(\vx_*, \vy_*)$ and letting
\begin{align*}
    \gE_k := \eta_k\big(\gL(\vx_k, \vv) - \gL(\vx_*, \vy_*)\big) + \frac{b}{2n}\|\vx_k - \vx_*\|^2 - \frac{b}{2n}\|\vx_{k - 1} - \vx_*\|^2, 
\end{align*}
 we obtain 
\begin{align*}
    \gE_k \leq \;& \frac{\eta_k}{n}\sum_{i = 1}^m\innp{\mE_b^\top\vy_k^{(i)}, \vx_{k - 1, i} - \vx_{k - 1, i + 1}} + \frac{\eta_k}{n}\sum_{i = 1}^m\innp{\mE_b^\top\vv_k^{(i)}, \vx_k - \vx_{k - 1, i}} \\
    & - \frac{\eta_k}{2n}\|\vy_k - \vv_k\|_{\mLambda_k^{-1}}^2 - \frac{\eta_k}{2n}\|\vy_k - \vy_{*, k}\|_{\mLambda_k^{-1}}^2 - \frac{b}{2n}\sum_{i = 1}^m\|\vx_{k - 1, i + 1} - \vx_{k - 1, i}\|^2 \\
    = \;& \frac{\eta_k}{n}\sum_{i = 1}^m\innp{\mE_b^\top\vy_k^{(i)}, \vx_{k} - \vx_{k - 1, i + 1}} + \frac{\eta_k}{n}\sum_{i = 1}^m\innp{\mE_b^\top\big(\vv_k^{(i)} - \vy_k^{(i)}\big), \vx_k - \vx_{k - 1, i}} \\
    & - \frac{\eta_k}{2n}\|\vy_k - \vv_k\|_{\mLambda_k^{-1}}^2 - \frac{\eta_k}{2n}\|\vy_k - \vy_{*, k}\|_{\mLambda_k^{-1}}^2 - \frac{b}{2n}\sum_{i = 1}^m\|\vx_{k - 1, i + 1} - \vx_{k - 1, i}\|^2, 
\end{align*}
thus completing the proof.
\end{proof}
We note that the first inner product term $\gT_1 := \frac{\eta_k}{n}\sum_{i = 1}^m\innp{\mE_b^\top\vy_k^{(i)}, \vx_{k} - \vx_{k - 1, i + 1}}$ in Eq.~\eqref{eq:gap-bound-general} can be cancelled by the last negative term $- \frac{b}{2n}\sum_{i = 1}^m\|\vx_{k - 1, i + 1} - \vx_{k - 1, i}\|^2$ therein, using the results of Lemma~\ref{lemma:first-inner}. In the following subsections, we continue our analysis and handle the remaining terms in Eq.~\eqref{eq:gap-bound-general} according to different shuffling and derive the final complexity.

\subsection{Random Reshuffling/Shuffle-Once Schemes}
We introduce the following lemma to bound the second inner product term $\gT_2 := \frac{\eta_k}{n}\sum_{i = 1}^m\innp{\mE_b^\top\big(\vv_k^{(i)} - \vy_k^{(i)}\big), \vx_k - \vx_{k - 1, i}}$ in Lemma~\ref{lemma:gap-bound-general} when there are random permutations.
\begin{restatable}{lemma}{secondInner-general}
\label{lemma:second-inner-general}
Under Assumptions~\ref{assp:fi}~and~\ref{assp:vr-fi}, for any $k \in [K]$, the iterates $\{\vy_k^{(i)}\}_{i = 1}^m$ and $\{\vx_{k - 1, i}\}_{i = 1}^{m + 1}$ generated by Algorithm~\ref{alg:PD-shuffled-sgd} with uniformly random shuffling (RR or SO) satisfy 
\begin{equation*}
\begin{aligned}
    \E[\gT_{2}] 
    \leq \;& \E\Big[\frac{\eta_k^3 n\Hat{L}^g_{\pi^{(k)}}\Tilde{L}^g_{\pi^{(k)}}}{b^2}\|\vy_k - \vy_{*, k}\|^2_{\mLambda_k^{-1}} + \frac{\eta_k}{2n}\|\vv_k - \vy_{k}\|_{\mLambda_k^{-1}}^2\Big] + \frac{\eta_k^3\Tilde{L}^g(n - b)(n + b)}{6b^2(n - 1)}\sigma_*^2, 
\end{aligned}
\end{equation*}
where $\gT_2 := \frac{\eta_k}{n}\sum_{i = 1}^m\innp{\mE_b^\top\big(\vv_k^{(i)} - \vy_k^{(i)}\big), \vx_k - \vx_{k - 1, i}}$.
\end{restatable}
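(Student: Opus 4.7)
The plan is to mirror the proof of Lemma~\ref{lemma:second-inner} but replace the data matrix $\mA$ with the identity-stacking matrix $\mE$ and use the general smoothness constants $\Hat{L}^g_\pi, \Tilde{L}^g_\pi$ from~\eqref{eq:new-smoothness-constants-general}. First, the primal update in Algorithm~\ref{alg:PD-shuffled-sgd-general} gives $\vx_{k-1,i} - \vx_{k-1,i+1} = \frac{\eta_k}{b}\mE_b^\top \vy_k^{(i)}$, which telescopes into $\vx_k - \vx_{k-1,i} = -\frac{\eta_k}{b}\mE^\top \mI_{bd(i-1)\uparrow}\vy_k$. Combined with the identity $\mE_b^\top(\vv_k^{(i)} - \vy_k^{(i)}) = \mE^\top \mI_{(di)}(\vv_k - \vy_k)$, this converts
\begin{equation*}
\gT_2 = -\frac{\eta_k^2}{bn}\sum_{i=1}^m \innp{\mE^\top \mI_{bd(i-1)\uparrow}\vy_k, \mE^\top \mI_{(di)}(\vv_k - \vy_k)}.
\end{equation*}

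Next I would split $\vy_k = (\vy_k - \vy_{*,k}) + \vy_{*,k}$ and handle the two resulting cross-terms via Young's inequality. For the $(\vy_k - \vy_{*,k})$ part, choosing $\alpha = 2\eta_k \Tilde{L}^g_{\pi^{(k)}}$ and applying the Cauchy-Schwarz manipulation of Eq.~\eqref{eq:cyclic}, together with the definitions~\eqref{eq:new-smoothness-constants-general} (in particular, $\sum_{i=1}^m\mI_{(di)}\mE\mE^\top\mI_{(di)}$ is block-diagonal, so its $\mLambda_k^{1/2}$-conjugated operator norm equals $b\Tilde{L}^g_{\pi^{(k)}}$), yields $\frac{\eta_k^3 n \Hat{L}^g_{\pi^{(k)}}\Tilde{L}^g_{\pi^{(k)}}}{b^2}\|\vy_k - \vy_{*,k}\|^2_{\mLambda_k^{-1}} + \frac{\eta_k}{4n}\|\vv_k - \vy_k\|^2_{\mLambda_k^{-1}}$. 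The $\vy_{*,k}$ part, handled with $\beta = 2\eta_k\Tilde{L}^g$, produces an analogous $\frac{\eta_k}{4n}$ residual plus a term proportional to $\frac{\eta_k^3\Tilde{L}^g}{nb}\sum_{i=1}^m\|\mE^\top \mI_{bd(i-1)\uparrow}\vy_{*,k}\|^2$.

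The main step is bounding the expectation of this last sum by $\sigma_*^2$. Here the key observation is that $\mE^\top \mI_{bd(i-1)\uparrow}\vy_{*,k}$ equals the sum of $\nabla f_j(\vx_*)$ over the last $n - b(i-1)$ indices of $\pi^{(k)}$, which under uniform random shuffling (RR or SO) is distributed as a uniformly random size-$(n - b(i-1))$ without-replacement batch. Combined with the first-order optimality condition $\nabla f(\vx_*) = \frac{1}{n}\sum_{i=1}^n \vy_*^i = 0$, Lemma~\ref{lem:batch-vr} applied with batch size $\tilde b = n - b(i-1)$ gives $\E[\|\mE^\top \mI_{bd(i-1)\uparrow}\vy_{*,k}\|^2] = \frac{(n - b(i-1))b(i-1)}{n-1}\sigma_*^2$. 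An elementary summation $\sum_{i=1}^m (i-1)(n - b(i-1)) = \frac{n(n-b)(n+b)}{6b^2}$ (using $n = bm$) then yields the claimed $\frac{\eta_k^3\Tilde{L}^g(n-b)(n+b)}{6b^2(n-1)}\sigma_*^2$ term. The lemma follows by assembling the three contributions; for RR one takes conditional expectation over $\pi^{(k)}$ and invokes the tower property, while for SO the single initial random permutation is already uniform, so taking unconditional expectation on both sides suffices.
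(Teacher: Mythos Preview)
Your proposal is correct and follows essentially the same route as the paper's proof: the same telescoping of the primal updates, the same decomposition $\vy_k = (\vy_k - \vy_{*,k}) + \vy_{*,k}$, the same Young's inequality choices $\alpha = 2\eta_k\Tilde{L}^g_{\pi^{(k)}}$ and $\beta = 2\eta_k\Tilde{L}^g$, and the same application of Lemma~\ref{lem:batch-vr} (using $\sum_i \vy_*^i = \vzero$) followed by the summation identity to obtain the $\sigma_*^2$ term. The treatment of RR versus SO via conditional versus unconditional expectation also matches.
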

\begin{proof}
First note that $\vx_k - \vx_{k - 1, i} = \sum_{j = i}^m(\vx_{k - 1, j + 1} - \vx_{k - 1, j}) = -\frac{\eta_k}{b} \sum_{j = i}^m \mE_b^\top\vy_k^{(j)} = -\frac{\eta_k}{b} \mE^\top \mI_{bd(i - 1)\uparrow}\vy_k$, so we have 
\begin{align*}
    \frac{\eta_k}{n}\sum_{i = 1}^m\innp{\mE_b^\top\big(\vv_k^{(i)} - \vy_k^{(i)}\big), \vx_k - \vx_{k - 1, i}} = \;& \frac{\eta_k}{n}\sum_{i = 1}^m\innp{\mE_b^\top\big(\vv_k^{(i)} - \vy_k^{(i)}\big), \sum_{j = i}^m(\vx_{k - 1, j + 1} - \vx_{k - 1, j})} \\
    = \;& -\frac{\eta_k^2}{bn}\sum_{i = 1}^m\innp{\mE^\top \mI_{(di)}(\vv_k - \vy_k), \mE^\top \mI_{bd(i - 1)\uparrow}\vy_k} \\
    = \;& \underbrace{-\frac{\eta_k^2}{bn}\sum_{i = 1}^m\innp{\mE^\top \mI_{(di)}(\vv_k - \vy_k), \mE^\top \mI_{bd(i - 1)\uparrow}(\vy_k - \vy_{*, k})}}_{\gI_1} \\
    & \underbrace{- \frac{\eta_k^2}{bn}\sum_{i = 1}^m\innp{\mE^\top \mI_{(di)}(\vv_k - \vy_k), \mE^\top \mI_{bd(i - 1)\uparrow}\vy_{*, k}}}_{\gI_2}. 
\end{align*}
For the term $\gI_1$, we use Young's inequality with $\alpha > 0$ to be set later and obtain 
\begin{align}
    \gI_1 = \;& -\frac{\eta_k^2}{bn}\sum_{i = 1}^m\innp{\mE^\top \mI_{(di)}(\vv_k - \vy_k), \mE^\top \mI_{bd(i - 1)\uparrow}(\vy_k - \vy_{*, k})} \notag\\
    \leq \;& \frac{\eta_k^2}{2bn\alpha}\sum_{i = 1}^m\|\mE^\top \mI_{(di)}(\vv_k - \vy_k)\|^2 + \frac{\eta_k^2\alpha}{2bn}\sum_{i = 1}^m\|\mE^\top \mI_{bd(i - 1)\uparrow}(\vy_k - \vy_{*, k})\|^2. \label{eq:gen-I1-1}
\end{align}
Further, notice that 
\begin{align}
    \;& \frac{\eta_k^2\alpha}{2bn}\sum_{i = 1}^m\|\mE^\top \mI_{bd(i - 1)\uparrow}(\vy_k - \vy_{*, k})\|^2 \notag\\
    = \;& \frac{\eta_k^2\alpha}{2bn}\sum_{i = 1}^m(\vy_k - \vy_{*, k})^\top\mI_{bd(i - 1)\uparrow}\mE\mE^\top\mI_{bd(i - 1)\uparrow}(\vy_k - \vy_{*, k}) \notag\\
    = \;& \frac{\eta_k^2\alpha}{2bn}(\vy_k - \vy_{*, k})^\top\Big(\sum_{i = 1}^m\mI_{bd(i - 1)\uparrow}\mE\mE^\top\mI_{bd(i - 1)\uparrow}\Big)(\vy_k - \vy_{*, k}) \notag\\
    = \;& \frac{\eta_k^2\alpha}{2bn}(\vy_k - \vy_{*, k})^\top\mLambda_k^{-1/2}\mLambda_k^{1/2}\Big(\sum_{i = 1}^m\mI_{bd(i - 1)\uparrow}\mE\mE^\top\mI_{bd(i - 1)\uparrow}\Big)\mLambda_k^{1/2}\mLambda_k^{-1/2}(\vy_k - \vy_{*, k}) \notag\\
    \leq \;& \frac{\eta_k^2\alpha}{2bn}\Big\|\mLambda_k^{1/2}\Big(\sum_{i = 1}^m\mI_{bd(i - 1)\uparrow}\mE\mE^\top\mI_{bd(i - 1)\uparrow}\Big)\mLambda_k^{1/2}\Big\|_2\|\vy_k - \vy_{*, k}\|_{\mLambda_k^{-1}}^2 = \frac{\eta_k^2m\alpha}{2b}\hat L_{\pi^{(k)}}^g\|\vy_k - \vy_{*, k}\|_{\mLambda_k^{-1}}^2, \label{eq:gen-I1-2}
\end{align}
where for the last inequality we use Cauchy-Schwarz inequality. Using the same argument, we can bound 
\begin{align}
    \frac{\eta_k^2}{2bn\alpha}\sum_{i = 1}^m\|\mE^\top \mI_{(di)}(\vv_k - \vy_k)\|^2 &\leq \frac{\eta_k^2}{2bn\alpha}\Big\|\mLambda_k^{1/2}\Big(\sum_{i = 1}^m\mI_{(di)}\mE\mE^\top\mI_{(di)}\Big)\mLambda_k^{1/2}\Big\|_2\|\vv_k - \vy_{k}\|_{\mLambda_k^{-1}}^2\notag \\
    &= \frac{\eta_k^2}{2n\alpha}\tilde L_{\pi^{(k)}}^g\|\vv_k - \vy_{k}\|_{\mLambda_k^{-1}}^2. \label{eq:gen-I1-3}
\end{align}
Thus, combining \eqref{eq:gen-I1-1}--\eqref{eq:gen-I1-3} and choosing $\alpha = 2\eta_k\tilde L^g_{\pi^{(k)}}$, we obtain 
\begin{align*}
    \gI_1 \leq \frac{\eta_k^3 m \hat L^g_{\pi^{(k)}}\tilde L^g_{\pi^{(k)}}}{b}\|\vy_k - \vy_{*, k}\|_{\mLambda_k^{-1}}^2 + \frac{\eta_k}{4n}\|\vv_k - \vy_k\|_{\mLambda_k^{-1}}^2. 
\end{align*}
For the term $\gI_2$, we again apply Young's inequality with $\beta > 0$ to be set later and obtain 
\begin{align*}
    \gI_2 = \;& - \frac{\eta_k^2}{bn}\sum_{i = 1}^m\innp{\mE^\top \mI_{(di)}(\vv_k - \vy_k), \mE^\top \mI_{bd(i - 1)\uparrow}\vy_{*, k}} \\
    \leq \;& \frac{\eta_k^2\beta}{2bn}\sum_{i = 1}^m\|\mE^\top \mI_{bd(i - 1)\uparrow}\vy_{*, k}\|^2 + \frac{\eta_k^2}{2bn\beta}\sum_{i = 1}^m\|\mE^\top \mI_{(di)}(\vv_k - \vy_k)\|^2 \\
    \leq \;& \frac{\eta_k^2\beta}{2bn}\sum_{i = 1}^m\|\mE^\top \mI_{bd(i - 1)\uparrow}\vy_{*, k}\|^2 + \frac{\eta_k^2\tilde L^g_{\pi^{(k)}}}{2n\beta}\|\vv_k - \vy_k\|^2_{\mLambda_{k}^{-1}}.
\end{align*}
Choosing $\beta = 2\eta_k\tilde{L}^g$ and using the fact that $\tilde L_{\pi^{(k)}}^g \leq \tilde L^g$, we have 
\begin{align*}
    \gI_2 \leq \frac{\eta_k^3\tilde L^g}{bn}\sum_{i = 1}^m\|\mE^\top \mI_{bd(i - 1)\uparrow}\vy_{*, k}\|^2 + \frac{\eta_k}{4n}\|\vv_k - \vy_k\|^2_{\mLambda_{k}^{-1}}.
\end{align*}
Hence, combining the above two estimates with $m = n/b$, we have 
\begin{align*}
     \gT_2
     \leq \frac{\eta_k^3\tilde L^g}{bn}\sum_{i = 1}^m\|\mE^\top \mI_{bd(i - 1)\uparrow}\vy_{*, k}\|^2 + \frac{\eta_k^3 n \hat L^g_{\pi^{(k)}} \tilde L^g_{\pi^{(k)}}}{b^2}\|\vy_k - \vy_{*, k}\|_{\mLambda_k^{-1}}^2 + \frac{\eta_k}{2n}\|\vv_k - \vy_k\|^2_{\mLambda_{k}^{-1}}.
\end{align*}
First, consider the RR scheme. Taking conditional expectation on both sides w.r.t.\ the randomness up to but not including $k$-th epoch, we have 
\begin{align*}
    \E_k[\gT_2] \leq \;& \frac{\eta_k^3\tilde L^g}{bn}\E_k\Big[\sum_{i = 1}^m\|\mE^\top \mI_{bd(i - 1)\uparrow}\vy_{*, k}\|^2\Big] + \E_k\Big[\frac{\eta_k^3 n \hat L^g_{\pi^{(k)}}\tilde L^g_{\pi^{(k)}}}{b^2}\|\vy_k - \vy_{*, k}\|_{\mLambda_k^{-1}}^2 + \frac{\eta_k}{2n}\|\vv_k - \vy_k\|^2_{\mLambda_{k}^{-1}}\Big].
\end{align*}
For the first term, since the only randomness comes from the permutation $\pi^{(k)}$, we can proceed as in the proof of Lemma~\ref{lemma:second-inner} and obtain 
\begin{align*}
    \frac{\eta_k^3\tilde L^g}{bn}\E_k\Big[\sum_{i = 1}^m\|\mE^\top \mI_{bd(i - 1)\uparrow}\vy_{*, k}\|^2\Big] \overset{(\romannumeral1)}{=} \;& \frac{\eta_k^3\tilde L^g}{bn}\sum_{i = 1}^m\E_{\pi^{(k)}}\Big[\|\mE^\top \mI_{bd(i - 1)\uparrow}\vy_{*, k}\|^2\Big] \\
    = \;& \frac{\eta_k^3\tilde L^g}{bn}\sum_{i = 1}^m (n - b(i - 1))^2\E_{\pi^{(k)}}\Big[\Big\|\frac{\mE^\top \mI_{bd(i - 1)\uparrow}\vy_{*, k}}{n - b(i - 1)}\Big\|^2\Big] \\
    \overset{(\romannumeral2)}{\leq} \;& \frac{\eta_k^3\tilde L^g}{bn}\sum_{i = 1}^m (n - b(i - 1))^2\frac{b(i - 1)}{(n - b(i - 1))(n - 1)}\sigma_*^2 \\
    = \;& \frac{\eta_k^3\tilde L^g (n - b)(n + b)}{6b^2(n - 1)}\sigma_*^2, 
\end{align*}
where we use the linearity of expectation for $(\romannumeral1)$, and $(\romannumeral2)$ is due to Lemma~\ref{lem:batch-vr} and the definition $\sigma_*^2 = \frac{1}{n}\sum_{i = 1}^n\|\nabla f_i(\vx_*)\|^2 = \frac{1}{n}\sum_{i = 1}^n\|\vy_*^i\|^2$. Then taking expectation w.r.t.\ all randomness on both sides, we obtain 
\begin{align*}
    \E[\gT_2] \leq \;& \E\Big[\frac{\eta_k^3 n \hat L^g_{\pi^{(k)}}\tilde L^g_{\pi^{(k)}}}{b^2}\|\vy_k - \vy_{*, k}\|_{\mLambda_k^{-1}}^2 + \frac{\eta_k}{2n}\|\vv_k - \vy_k\|^2_{\mLambda_{k}^{-1}}\Big] + \frac{\eta_k^3\tilde L^g (n - b)(n + b)}{6b^2(n - 1)}\sigma_*^2.
\end{align*}
Finally, we remark that the above argument for bounding the term $\frac{\eta_k^3\tilde L^g}{bn}\E_k\Big[\sum_{i = 1}^m\|\mE^\top \mI_{bd(i - 1)\uparrow}\vy_{*, k}\|^2\Big]$ also applies to the SO scheme, in which case there is only one random permutation at the very beginning that induces the randomness.
\end{proof}
To derive the final convergence rate and complexity, we follow the same analysis as in the proof for Theorem~\ref{thm:convergence}. We state the results in the following theorem and provide the proof for completeness.
\begin{restatable}{theorem}{convergenceGeneral}
\label{thm:convergence-general}
Under Assumptions~\ref{assp:fi}~and~\ref{assp:vr-fi}, if $\eta_k \leq \frac{b}{n\sqrt{2\Hat{L}^g_{\pi^{(k)}}\Tilde{L}^g_{\pi^{(k)}}}}$ 
and $H_K = \sum_{k = 1}^K\eta_k$, the output $\vxh_K$ of 
Algorithm~\ref{alg:PD-shuffled-sgd-general} with uniformly random (SO or RR) shuffling satisfies 
\begin{equation*}
    \E[H_K(f(\Hat{\vx}_K) - f(\vx_*))] \leq \frac{b}{2n}\|\vx_0 - \vx_*\|_2^2 + \sum_{k=1}^K\frac{\eta_k^3\Tilde{L}^g(n - b)(n + b)}{6b^2(n - 1)}\sigma_*^2.
\end{equation*}
As a consequence, for any $\epsilon > 0,$ there exists a choice of a constant step size $\eta_k = \eta$ for which $\E[f(\Hat{\vx}_K) - f(\vx_*)] \leq \epsilon$ after $\gO\big(\frac{n\sqrt{\Hat{L}^g\Tilde{L}^g}\|\vx_0 - \vx_*\|_2^2}{\epsilon} + \sqrt{\frac{(n - b)(n + b)}{n(n - 1)}}\frac{\sqrt{n\Tilde{L}^g}\sigma_*\|\vx_0 - \vx_*\|_2^2}{\epsilon^{3/2}}\big)$ gradient queries.
\end{restatable}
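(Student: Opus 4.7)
The plan is to follow the same template as the proof of Theorem~\ref{thm:convergence}, with the general-setting lemmas (Lemma~\ref{lemma:gap-bound-general} and Lemma~\ref{lemma:second-inner-general}) substituted for their generalized-linear-model counterparts. The key observation is that Lemma~\ref{lemma:first-inner} transfers verbatim to Algorithm~\ref{alg:PD-shuffled-sgd-general}: its proof uses only the identity $\mA_k^{(i)\top}\vy_k^{(i)} = \tfrac{b}{\eta_k}(\vx_{k-1,i} - \vx_{k-1,i+1})$, and the exact analogue $\mE_b^\top \vy_k^{(i)} = \tfrac{b}{\eta_k}(\vx_{k-1,i} - \vx_{k-1,i+1})$ holds by the primal update of Algorithm~\ref{alg:PD-shuffled-sgd-general}. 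Consequently $\gT_1 = \frac{b}{2n}\sum_{i=1}^m \|\vx_{k-1,i} - \vx_{k-1,i+1}\|^2 - \frac{b}{2n}\|\vx_{k-1} - \vx_k\|^2$, and its first sum exactly cancels the last retraction term in \eqref{eq:gap-bound-general}.

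Substituting this cancellation together with the $\E[\gT_2]$ bound from Lemma~\ref{lemma:second-inner-general} into \eqref{eq:gap-bound-general}, and noting that the $-\frac{\eta_k}{2n}\|\vy_k - \vv_k\|_{\mLambda_k^{-1}}^2$ term in the gap bound offsets its counterpart from Lemma~\ref{lemma:second-inner-general}, I would obtain
\begin{align*}
\E[\gE_k] \leq \E\Big[\Big(\tfrac{\eta_k^3 n \hat L^g_{\pi^{(k)}} \tilde L^g_{\pi^{(k)}}}{b^2} - \tfrac{\eta_k}{2n}\Big)\|\vy_k - \vy_{*,k}\|_{\mLambda_k^{-1}}^2\Big] + \tfrac{\eta_k^3 \tilde L^g (n-b)(n+b)}{6 b^2 (n-1)}\sigma_*^2.
\end{align*}
Under the stepsize condition $\eta_k \leq \tfrac{b}{n\sqrt{2\hat L^g_{\pi^{(k)}} \tilde L^g_{\pi^{(k)}}}}$, the coefficient in front of $\|\vy_k - \vy_{*,k}\|_{\mLambda_k^{-1}}^2$ is non-positive and this term drops. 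Telescoping the definition $\gE_k = \eta_k \mathrm{Gap}^\vv(\vx_k, \vy_*) + \tfrac{b}{2n}\|\vx_* - \vx_k\|^2 - \tfrac{b}{2n}\|\vx_* - \vx_{k-1}\|^2$ over $k=1,\dots,K$, invoking convexity of $\vx \mapsto \gL(\vx, \vv)$ to pass to $\hat\vx_K$, and finally choosing $\vv = \vy_{\hat\vx_K}$ to turn the duality gap into a function-value gap yields the first displayed inequality in the theorem.

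For the complexity bound, I would specialize to a constant stepsize $\eta_k \equiv \eta$, reducing the bound to $\tfrac{b}{2n\eta K}\|\vx_0 - \vx_*\|^2 + \tfrac{\eta^2 \tilde L^g (n-b)(n+b)}{6b^2(n-1)}\sigma_*^2$, and pick $\eta = \min\big\{\tfrac{b}{n\sqrt{2\hat L^g \tilde L^g}},\, \big(\tfrac{3 b^3 (n-1)\|\vx_0 - \vx_*\|^2}{n(n-b)(n+b)\tilde L^g K \sigma_*^2}\big)^{1/3}\big\}$. Splitting into the ``small $K$'' regime (first argument of the min active) and the ``large $K$'' regime (second argument active), and inverting to find the number $nK$ of individual gradient evaluations required to drive the expected optimality gap below $\epsilon$, yields the stated $\gO\big(\tfrac{n\sqrt{\hat L^g \tilde L^g}\|\vx_0 - \vx_*\|^2}{\epsilon} + \sqrt{\tfrac{(n-b)(n+b)}{n(n-1)}}\tfrac{\sqrt{n \tilde L^g}\sigma_*\|\vx_0 - \vx_*\|^2}{\epsilon^{3/2}}\big)$ complexity.

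I do not foresee a serious obstacle here once Lemma~\ref{lemma:second-inner-general} is in place; the argument is a mechanical translation of the proof of Theorem~\ref{thm:convergence}. The one subtlety is that in this setting the dual lives in $\R^{nd}$ rather than $\R^n$, so one must carry the block-of-size-$d$ structure through every operator-norm manipulation, which is exactly what the indicator matrices $\mI_{bd(i-1)\uparrow}$ and $\mI_{(di)}$ in \eqref{eq:new-smoothness-constants-general} are designed to absorb. The real technical work is bundled into Lemma~\ref{lemma:second-inner-general}, where the Kronecker identity $\mE\mE^\top = \vone_n \vone_n^\top \otimes \mI_d$ (together with Lemma~\ref{lem:eig-kro}) is needed to relate $\hat L_\pi^g$ and $\tilde L_\pi^g$ to the component smoothness constants $L_i$ and to control the $\vy_{*,k}$-dependent residual via Lemma~\ref{lem:batch-vr}.
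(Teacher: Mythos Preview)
Your proposal is correct and follows essentially the same approach as the paper's proof: combine Lemma~\ref{lemma:gap-bound-general}, the transferred version of Lemma~\ref{lemma:first-inner}, and Lemma~\ref{lemma:second-inner-general}, apply the stepsize condition to kill the $\|\vy_k - \vy_{*,k}\|_{\mLambda_k^{-1}}^2$ term, telescope, pass to the average via convexity, and finish with the same two-regime stepsize analysis. One minor remark: the Kronecker identity and Lemma~\ref{lem:eig-kro} are not actually used inside the proof of Lemma~\ref{lemma:second-inner-general} (which relies only on Young's inequality, the definitions of $\hat L^g_\pi$, $\tilde L^g_\pi$, and Lemma~\ref{lem:batch-vr}); they appear separately, only to compare $\hat L^g_\pi$, $\tilde L^g_\pi$ with $L$.
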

\begin{proof}
Combining the bounds in Lemma~\ref{lemma:first-inner}~and~\ref{lemma:second-inner-general} and plugging them into Eq.~\eqref{eq:gap-bound-general}, we obtain 
\begin{equation*}
    \E[\gE_k] \leq \E\Big[\Big(\frac{\eta_k^3 n \Hat{L}^g_{\pi^{(k)}}\Tilde{L}^g_{\pi^{(k)}}}{b^2} - \frac{\eta_k}{2n}\Big)\|\vy_k - \vy_{*, k}\|^2_{\mLambda_k^{-1}}\Big] + \frac{\eta_k^3\Tilde{L}^g(n - b)(n + b)}{6b^2(n - 1)}\sigma_*^2.
\end{equation*}
For the stepsize $\eta_k$ such that $\eta_k \leq \frac{b}{n\sqrt{2\Hat{L}^g_{\pi^{(k)}}\Tilde{L}^g_{\pi^{(k)}}}}$, we have $\frac{\eta_k^3 n \Hat{L}^g_{\pi^{(k)}}\Tilde{L}^g_{\pi^{(k)}}}{b^2} - \frac{\eta_k}{2n} \leq 0$, thus 
\begin{equation*}
    \E[\gE_k] \leq \frac{\eta_k^3\Tilde{L}^g(n - b)(n + b)}{6b^2(n - 1)}\sigma_*^2. 
\end{equation*}
Using our definition of $\gE_k$ and telescoping from $k = 1$ to $K$, we have 
\begin{equation*}
    \E\Big[\sum_{k = 1}^K\eta_k\text{Gap}^\vv(\vx_{k}, \vy_*)\Big] \leq \frac{b}{2n}\|\vx_* - \vx_0\|_2^2 - \frac{b}{2n}\E[\|\vx_* - \vx_{K}\|_2^2] + \sum_{k = 1}^K\frac{\eta_k^3\Tilde{L}^g(n - b)(n + b)}{6b^2(n - 1)}\sigma_*^2.
\end{equation*}
Noticing that $\gL(\vx, \vv)$ is convex in $\vx$ for a fixed $\vv$, we have $\text{Gap}^\vv(\Hat\vx_{K}, \vy_*) \leq \sum_{k = 1}^K\eta_k\text{Gap}^\vv(\vx_{k}, \vy_*) / H_K$, where $\Hat{\vx}_K = \sum_{k=1}^K \eta_k \vx_{k} / H_K$ and $H_K = \sum_{k = 1}^K\eta_k$, which leads to  
\begin{equation*}
    \E\Big[H_K\text{Gap}^\vv(\Hat\vx_{K}, \vy_*)\Big] \leq \frac{b}{2n}\|\vx_0 - \vx_*\|_2^2 + \sum_{k=1}^K \frac{\eta_k^3\Tilde{L}^g(n - b)(n + b)}{6b^2(n - 1)}\sigma_*^2. 
\end{equation*}
Further choosing $\vv = \vy_{\Hat{\vx}_K}$, we obtain 
\begin{equation}\label{eq:final-bound-general}
    \E[H_K\big(f(\Hat{\vx}_K) - f(\vx_*)\big)] \leq \frac{b}{2n}\|\vx_0 - \vx_*\|_2^2 + \sum_{k=1}^K\frac{\eta_k^3\Tilde{L}^g(n - b)(n + b)}{6b^2(n - 1)}\sigma_*^2.
\end{equation}
To analyze the individual gradient oracle complexity, we choose constant stepsizes $\eta \leq \frac{b}{n\sqrt{2\Hat{L}^g\Tilde{L}^g}}$, then Eq.~\eqref{eq:final-bound-general} will become 
\begin{equation*}
\begin{aligned}
    \E[f(\Hat{\vx}_K) - f(\vx_*)] \leq \frac{b}{2n\eta K}\|\vx_0 - \vx_*\|_2^2 + \frac{\eta^2\Tilde{L}^g(n - b)(n + b)}{6b^2(n - 1)}\sigma_*^2.
\end{aligned}
\end{equation*}
Without loss of generality, we assume that $b \neq n$, otherwise the method and its analysis reduce to (full) gradient descent. We consider the following two cases: 
\begin{itemize}[leftmargin=*]
    \item ``Small $K$'' case: if $\eta = \frac{b}{n\sqrt{2\Hat{L}^g\Tilde{L}^g}} \leq \Big(\frac{3b^3(n - 1)\|\vx_0 - \vx_*\|^2_2}{n(n - b)(n + b)\Tilde{L}^g K\sigma_*^2}\Big)^{1/3}$, we have 
    \begin{equation*}
    \begin{aligned}
        \E[f(\Hat{\vx}_K) - f(\vx_*)] \leq \;& \frac{b}{2n\eta K}\|\vx_0 - \vx_*\|_2^2 + \frac{\eta^2\Tilde{L}^g(n - b)(n + b)}{6b^2(n - 1)}\sigma_*^2\\
        \leq \;& \frac{\sqrt{\Hat{L}^g\Tilde{L}^g}}{\sqrt{2}K}\|\vx_0 - \vx_*\|_2^2 + \frac{1}{2}\Big(\frac{(n - b)(n + b)}{n^2(n - 1)}\Big)^{1/3}\frac{(\Tilde{L}^g)^{1/3}\sigma_*^{2/3}\|\vx_0 - \vx_*\|^{4/3}_2}{3^{1/3}K^{2/3}}.
    \end{aligned}
    \end{equation*}
    \item ``Large $K$'' case: 
    if $\eta = \Big(\frac{3b^3(n - 1)\|\vx_0 - \vx_*\|^2_2}{n(n - b)(n + b)\Tilde{L}^g K\sigma_*^2}\Big)^{1/3} \leq \frac{b}{n\sqrt{2\Hat{L}^g\Tilde{L}^g}}$, we have 
    \begin{equation*}
    \begin{aligned}
        \E[f(\Hat{\vx}_K) - f(\vx_*)] \leq \;& \frac{b}{2n\eta K}\|\vx_0 - \vx_*\|_2^2 + \frac{\eta^2\Tilde{L}^g(n - b)(n + b)}{6b^2(n - 1)}\sigma_*^2 \\
        \leq \;& \Big(\frac{(n - b)(n + b)}{n^2(n - 1)}\Big)^{1/3}\frac{(\Tilde{L}^g)^{1/3}\sigma_*^{2/3}\|\vx_0 - \vx_*\|^{4/3}_2}{3^{1/3}K^{2/3}}. 
    \end{aligned}
    \end{equation*}
\end{itemize}
Combining these two cases by setting $\eta = \min\Big\{\frac{b}{n\sqrt{2\Hat{L}^g\Tilde{L}^g}} ,\, \Big(\frac{3b^3(n - 1)\|\vx_0 - \vx_*\|^2_2}{n(n - b)(n + b)\Tilde{L}^g K\sigma_*^2}\Big)^{1/3}\Big\}$, we obtain  
\begin{equation*}
    \E[f(\Hat{\vx}_K) - f(\vx_*)] \leq \frac{\sqrt{\Hat{L}^g\Tilde{L}^g}}{\sqrt{2}K}\|\vx_0 - \vx_*\|_2^2 + \Big(\frac{(n - b)(n + b)}{n^2(n - 1)}\Big)^{1/3}\frac{(\Tilde{L}^g)^{1/3}\sigma_*^{2/3}\|\vx_0 - \vx_*\|^{4/3}_2}{3^{1/3}K^{2/3}}.
\end{equation*}
Hence, to guarantee $\E[f(\Hat{\vx}_K) - f(\vx_*)] \leq \epsilon$ for $\epsilon > 0$, the total number of individual gradient evaluations will be 
\begin{equation*}
    nK \geq \max\Big\{\frac{n\sqrt{2\Hat{L}^g\Tilde{L}^g}\|\vx_0 - \vx_*\|_2^2}{\epsilon}, \Big(\frac{(n - b)(n + b)}{n - 1}\Big)^{1/2}\frac{2^{3/2}(\Tilde{L}^g)^{1/2}\sigma_*\|\vx_0 - \vx_*\|_2^2}{3^{1/2}\epsilon^{3/2}}\Big\}, 
\end{equation*}
as claimed.  
\end{proof}

\subsection{Incremental Gradient Descent (IG)}
In this subsection, we provide the convergence results for incremental gradient descent which does not involve random permutations. We first follow the process of Lemma~\ref{lemma:second-inner-IGD} and prove the technical lemma below to bound the term $\gT_2 := \frac{\eta_k}{n}\sum_{i = 1}^m\innp{\mE_b^\top\big(\vv_k^{(i)} - \vy_k^{(i)}\big), \vx_k - \vx_{k - 1, i}}$ in Eq.~\eqref{eq:gap-bound-general} of Lemma~\ref{lemma:gap-bound-general}. 
\begin{restatable}{lemma}{secondInnerIGD-general}
\label{lemma:second-inner-IGD-general}
For any $k \in [K]$, the iterates $\{\vy_k^{(i)}\}_{i = 1}^m$ and $\{\vx_{k - 1, i}\}_{i = 1}^{m + 1}$ generated by Algorithm~\ref{alg:PD-shuffled-sgd} with fixed data ordering satisfy 
\begin{equation}\label{eq:sec-inn-bnd-IGD-general}
\begin{aligned}
\gT_2 \leq \frac{\eta_k^3 n}{b^2}\Hat{L}^g_0\Tilde{L}^g_0\|\vy_k - \vy_{*}\|^2_{\mLambda^{-1}} + \frac{\eta_k}{2n}\|\vv - \vy_k\|^2_{\mLambda^{-1}} + \min\Big\{\frac{\eta_k^3 n}{b^2}\Hat{L}^g_0\Tilde{L}^g_0\|\vy_{*}\|^2_{\mLambda^{-1}}, \frac{\eta_k^3 (n - b)^2}{b^2}\Tilde{L}^g_0\sigma_*^2\Big\}. 
\end{aligned}
\end{equation}
\end{restatable}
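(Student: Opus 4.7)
The plan is to mirror the proof of Lemma~\ref{lemma:second-inner-IGD} in the generalized linear model setting, adapting each algebraic step to the general finite-sum formulation by replacing the scalar dual coordinate $\vy_*^j\va_j$ with the vector-valued $\vy_*^j \in \R^d$ and replacing the data matrix $\mA$ with the concatenated identity operator $\mE$. The mapping is essentially term-for-term, since the inner-product structure in Algorithm~\ref{alg:PD-shuffled-sgd-general} is identical to Algorithm~\ref{alg:PD-shuffled-sgd} after this substitution, and the definitions of $\Hat{L}_0^g, \Tilde{L}_0^g$ in \eqref{eq:new-smoothness-constants-general} are the exact analogues of $\Hat{L}_0, \Tilde{L}_0$.

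First, I would use the primal update in Line~7 of Algorithm~\ref{alg:PD-shuffled-sgd-general} (with the fixed permutation, so we can drop all $\pi^{(k)}$ subscripts) to write
\[
    \vx_k - \vx_{k-1,i} \;=\; -\frac{\eta_k}{b}\,\mE^\top \mI_{bd(i-1)\uparrow}\,\vy_k,
\]
and substitute into $\gT_2$ to obtain
\[
    \gT_2 = -\frac{\eta_k^2}{bn}\sum_{i=1}^m \innp{\mE^\top\mI_{(di)}(\vv_k-\vy_k)}{\mE^\top\mI_{bd(i-1)\uparrow}\vy_k}.
\]
Next, I would split $\vy_k = (\vy_k - \vy_*) + \vy_*$ inside the second factor, producing two inner-product sums, and apply Young's inequality with parameter $\alpha = 2\eta_k \Tilde{L}_0^g$ to each, just as in the Young's-inequality step of Lemma~\ref{lemma:second-inner-general}. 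The Cauchy--Schwarz step combined with the definitions of $\Hat{L}_0^g$ and $\Tilde{L}_0^g$ in~\eqref{eq:new-smoothness-constants-general} yields the first two terms of~\eqref{eq:sec-inn-bnd-IGD-general} and leaves the residual
\[
    \frac{\eta_k^3 \Tilde{L}_0^g}{bn}\sum_{i=1}^m \big\|\mE^\top \mI_{bd(i-1)\uparrow}\vy_*\big\|^2
\]
to be controlled.

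The residual term must be bounded in two different ways, and the result follows by taking the minimum. The first bound uses the Cauchy--Schwarz / operator-norm argument as in~\eqref{eq:cyclic}, which directly gives $\tfrac{\eta_k^3 n}{b^2}\Hat{L}_0^g\Tilde{L}_0^g\|\vy_*\|^2_{\mLambda^{-1}}$. The second bound exploits the optimality condition $\nabla f(\vx_*) = 0$, which here reads $\sum_{j=1}^n \vy_*^j = 0$ (the clean analogue of $\sum_j \vy_*^j\va_j = 0$ from the linear predictor case). Using $\mE^\top \mI_{bd(i-1)\uparrow}\vy_* = \sum_{j=b(i-1)+1}^n \vy_*^j = -\sum_{j=1}^{b(i-1)}\vy_*^j$ and then applying Young's inequality and a telescoping argument identical to the one in the proof of Lemma~\ref{lemma:second-inner-IGD}, together with $\sigma_*^2 = \tfrac{1}{n}\sum_j \|\vy_*^j\|^2$, produces the bound $\tfrac{\eta_k^3(n-b)^2}{b^2}\Tilde{L}_0^g\sigma_*^2$.

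The step I expect to require the most care is ensuring that the vector-valued analogue of the telescoping inequality is written down correctly, since unlike the scalar case one is summing $d$-dimensional blocks; however, no new inequality is needed because $\|\cdot\|$ on $\R^d$ behaves exactly as $|\cdot|$ did in the scalar arguments. Once these three ingredients (primal-update substitution, the $\vy_k = (\vy_k - \vy_*) + \vy_*$ split with Young's inequality, and the two-way bound on the residual) are combined, the conclusion~\eqref{eq:sec-inn-bnd-IGD-general} follows immediately. Consequently I do not expect any genuinely new technical obstacle beyond carefully tracking the Kronecker-type structure of $\mE\mE^\top$ in place of $\mA\mA^\top$.
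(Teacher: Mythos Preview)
Your proposal is correct and follows essentially the same approach as the paper's own proof: the same primal-update substitution, the same $\vy_k = (\vy_k - \vy_*) + \vy_*$ split with Young's inequality at parameter $\alpha = 2\eta_k\Tilde{L}_0^g$, and the same two-way bound on the residual term (operator-norm/Cauchy--Schwarz for the $\Hat{L}_0^g$-bound, optimality condition $\sum_j \vy_*^j = 0$ plus the telescoping Young's-inequality argument for the $\sigma_*^2$-bound). The only cosmetic difference is that the paper labels the two pieces $\gI_1$ and $\gI_2$ explicitly before combining.
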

\begin{proof}
Proceeding as in the proof of Lemma~\ref{lemma:second-inner-general}, we have 
\begin{align*}
    \frac{\eta_k}{n}\sum_{i = 1}^m\innp{\mE_b^\top\big(\vv^{(i)} - \vy_k^{(i)}\big), \vx_k - \vx_{k - 1, i}} = \;& \frac{\eta_k}{n}\sum_{i = 1}^m\innp{\mE_b^\top\big(\vv^{(i)} - \vy_k^{(i)}\big), \sum_{j = i}^m(\vx_{k - 1, j + 1} - \vx_{k - 1, j})} \\
    = \;& -\frac{\eta_k^2}{bn}\sum_{i = 1}^m\innp{\mE^\top \mI_{(di)}(\vv - \vy_k), \mE^\top \mI_{bd(i - 1)\uparrow}\vy_k} \\
    = \;& \underbrace{-\frac{\eta_k^2}{bn}\sum_{i = 1}^m\innp{\mE^\top \mI_{(di)}(\vv - \vy_k), \mE^\top \mI_{bd(i - 1)\uparrow}(\vy_k - \vy_{*})}}_{\gI_1} \\
    & \underbrace{- \frac{\eta_k^2}{bn}\sum_{i = 1}^m\innp{\mE^\top \mI_{(di)}(\vv - \vy_k), \mE^\top \mI_{bd(i - 1)\uparrow}\vy_{*}}}_{\gI_2}. 
\end{align*}
For both terms $\gI_1$ and $\gI_2$, we apply Young's inequality with $\alpha = 2\eta_k\tilde L_0^g$ and obtain 
\begin{align}
    \gI_1
    \leq \;& \frac{\eta_k^2\alpha}{2bn}\sum_{i = 1}^m\|\mE^{\top}\mI_{bd(i - 1)\uparrow}(\vy_k - \vy_{*})\|_2^2 + \frac{\eta_k^2}{2bn\alpha}\sum_{i = 1}^m\|\mE^{\top}\mI_{(di)}(\vv - \vy_k)\|_2^2 \notag\\
    \leq \;& \frac{\eta_k^2 n \alpha}{2b^2}\Hat{L}^g_0\|\vy_k - \vy_{*}\|^2_{\mLambda^{-1}} + \frac{\eta_k^2}{2n\alpha}\Tilde{L}^g_0\|\vv - \vy_k\|^2_{\mLambda^{-1}} \notag \\
    = \;& \frac{\eta_k^3 n}{b^2}\Hat{L}^g_0\Tilde{L}^g_0\|\vy_k - \vy_{*}\|^2_{\mLambda^{-1}} + \frac{\eta_k}{4n}\|\vv - \vy_k\|^2_{\mLambda^{-1}}, \label{eq:sec-inn-IGD-1-general}
\end{align}
and 
\begin{align}
    \gI_2
    \leq \;& \frac{\eta_k^2\alpha}{2bn}\sum_{i = 1}^m\|\mE^{\top}\mI_{bd(i - 1)\uparrow}\vy_{*}\|_2^2 + \frac{\eta_k^2}{2bn\alpha}\sum_{i = 1}^m\|\mE^{\top}\mI_{(di)}(\vv - \vy_k)\|_2^2 \notag \\
    \leq \;& \frac{\eta_k^2\alpha}{2bn}\sum_{i = 1}^m\|\mE^{\top}\mI_{bd(i - 1)\uparrow}\vy_{*}\|_2^2 + \frac{\eta_k^2}{2n\alpha}\Tilde{L}^g_0\|\vv - \vy_k\|^2_{\mLambda^{-1}} \notag \\
    = \;& \frac{\eta_k^3\Tilde{L}^g_0}{nb}\sum_{i = 1}^m\|\mE^{\top}\mI_{bd(i - 1)\uparrow}\vy_{*}\|_2^2 + \frac{\eta_k}{4n}\|\vv - \vy_k\|^2_{\mLambda^{-1}}. \label{eq:sec-inn-IGD-2-general}
\end{align}
We now show that the term $\frac{\eta_k^3\Tilde{L}^g_0}{nb}\sum_{i = 1}^m\|\mE^{\top}\mI_{bd(i - 1)\uparrow}\vy_{*}\|_2^2$ is no larger than  either $\frac{\eta_k^3 n}{b^2}\hat{L}^g_0\tilde{L}^g_0\|\vy_*\|^2_{\mLambda^{-1}}$ or $\frac{\eta_k^3(n - b)^2}{b^2}\tilde{L}^g_0\sigma_*^2$. This is trivial when $b = n$ as $\mE^\top \mI_{0\uparrow}\vy_* = \sum_{i = 1}^n \vy_*^i = \mathbf{0}$. When $b < n$, to show the former one, we have 
\begin{align*}
    \sum_{i = 1}^m\|\mE^{\top}\mI_{bd(i - 1)\uparrow}\vy_{*}\|_2^2 \leq \Big\|\mLambda^{1/2}\Big(\sum_{i = 1}^m\mI_{bd(i - 1)\uparrow}\mE\mE^{\top}\mI_{bd(i - 1)\uparrow}\Big)\mLambda^{1/2}\Big\|_2\|\vy_*\|^2_{\mLambda^{-1}} = mn\Hat{L}^g_0\|\vy_*\|^2_{\mLambda^{-1}} = \frac{n^2}{b}\Hat{L}^g_0\|\vy_*\|^2_{\mLambda^{-1}}.
\end{align*}
To prove the latter one, we notice that 
\begin{align*}
    \sum_{i = 1}^m\|\mE^{\top}\mI_{bd(i - 1)\uparrow}\vy_{*}\|_2^2 = \;& \sum_{i = 1}^m\Big\|\sum_{j = b(i - 1) + 1}^n\vy_*^j\Big\|_2^2 = \sum_{i = 0}^{m - 1}\Big\|\sum_{j = bi + 1}^n\vy_*^j\Big\|_2^2 = \sum_{i = 1}^{m - 1}\Big\|\sum_{j = bi + 1}^n\vy_*^j\Big\|_2^2 = \sum_{i = 1}^{m - 1}\Big\|\sum_{j = 1}^{bi}\vy_*^j\Big\|_2^2, 
\end{align*}
using the fact that $\sum_{i = 1}^n\vy_*^i = \mathbf{0}$. Then using Young's inequality we obtain 
\begin{align*}
\sum_{i = 1}^{m - 1}\Big\|\sum_{j = 1}^{bi}\vy_*^j\Big\|_2^2 \leq \;& \sum_{i = 1}^{m - 1}bi\sum_{j = 1}^{bi}\|\vy_*^j\|^2_2 \\
\leq \;& b(m - 1)\sum_{i = 1}^{m - 1}\sum_{j = 1}^{bi}\|\vy_*^j\|^2_2 \\
= \;& b(m - 1)\sum_{i = 1}^{m - 1}\sum_{j = b(i - 1) + 1}^{bi}(m - i)\|\vy_*^j\|^2_2 \\
\leq \;& b(m - 1)^2\sum_{i = 1}^{(m - 1)b}\|\vy_*^i\|^2_2.
\end{align*}
Further noticing that $\sum_{i = 1}^{(m - 1)b}\|\vy_*^i\|^2_2 \leq \sum_{i = 1}^n\|\vy_*^i\|^2 = n\sigma_*^2$, we have 
\begin{align*}
    \frac{\eta_k^3\Tilde{L}^g_0}{nb}\sum_{i = 1}^m\|\mE^{\top}\mI_{bd(i - 1)\uparrow}\vy_{*}\|_2^2 \leq \frac{\eta_k^3\Tilde{L}^g_0}{nb}b(m - 1)^2n\sigma_*^2 = \frac{\eta_k^3\Tilde{L}^g_0(n - b)^2}{b^2}\sigma_*^2. 
\end{align*}
The same bound also captures the case $b = n$ and leads to 
\begin{align}
    \frac{\eta_k^3\Tilde{L}^g_0}{nb}\sum_{i = 1}^m\|\mE^{\top}\mI_{bd(i - 1)\uparrow}\vy_{*}\|_2^2 \leq \min\Big\{\frac{\eta_k^3 n}{b^2}\Hat{L}^g_0\Tilde{L}^g_0\|\vy_{*}\|^2_{\mLambda^{-1}}, \frac{\eta_k^3 (n - b)^2}{b^2}\Tilde{L}^g_0\sigma_*^2\Big\}. \label{eq:sec-inn-IGD-3-general}
\end{align}
Hence, combining Eq.~\eqref{eq:sec-inn-IGD-1-general}--\eqref{eq:sec-inn-IGD-3-general}, we obtain 
\begin{align*}
    \gI_2 \leq \frac{\eta_k^3 n}{b^2}\Hat{L}^g_0\Tilde{L}^g_0\|\vy_k - \vy_{*}\|^2_{\mLambda^{-1}} + \frac{\eta_k}{2n}\|\vv - \vy_k\|^2_{\mLambda^{-1}} + \min\Big\{\frac{\eta_k^3 n}{b^2}\Hat{L}^g_0\Tilde{L}^g_0\|\vy_{*}\|^2_{\mLambda^{-1}}, \frac{\eta_k^3 (n - b)^2}{b^2}\Tilde{L}^g_0\sigma_*^2\Big\}, 
\end{align*}
which finishes the proof.
\end{proof}
We are now ready to state our convergence results for IGD in the following theorem, with its proof provided for completeness.
\begin{restatable}{theorem}{convergenceIGD-general}
\label{thm:convergence-IGD-general}
Under Assumptions~\ref{assp:fi}~and~\ref{assp:vr-fi}, if $\eta_k \leq \frac{b}{n\sqrt{2\Hat{L}^g_0\Tilde{L}^g_0}}$ and $H_K = \sum_{k = 1}^K\eta_k$, the output $\vxh_K$ of  Algorithm~\ref{alg:PD-shuffled-sgd-general} with a fixed permutation satisfies 
\begin{equation}\notag
    H_K\big(f(\Hat{\vx}_K) - f(\vx_*)\big) \leq \frac{b}{2n}\|\vx_0 - \vx_*\|_2^2 + \sum_{k=1}^K \min\Big\{\frac{\eta_k^3 n}{b^2}\Hat{L}^g_0\Tilde{L}^g_0\|\vy_{*}\|^2_{\mLambda^{-1}}, \frac{\eta_k^3 (n - b)^2}{b^2}\Tilde{L}^g_0\sigma_*^2\Big\}. 
\end{equation}
As a consequence, for any $\epsilon > 0,$ there exists a choice of a constant step size $\eta_k = \eta$ such that $f(\Hat{\vx}_K) - f(\vx_*) \leq \epsilon$ after $\gO\Big(\frac{n\sqrt{\Hat{L}^g_0\Tilde{L}^g_0}\|\vx_0 - \vx_*\|_2^2}{\epsilon} + \frac{\min\big\{\sqrt{n\Hat{L}^g_0\Tilde{L}^g_0}\|\vy_{*}\|_{\mLambda^{-1}},\, (n - b)\sqrt{\Tilde{L}^g_0}\sigma_*\big\}\|\vx_0 - \vx_*\|_2^2}{\epsilon^{3/2}}\Big)$ gradient queries. 
\end{restatable}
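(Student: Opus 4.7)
The plan is to mirror the proof of Theorem~\ref{thm:convergence-IGD}, but replace the generalized-linear-model ingredients with their general-finite-sum counterparts developed in Appendix~\ref{appx:general}. Concretely, I would start from Lemma~\ref{lemma:gap-bound-general} to obtain the primal-dual gap inequality for $\gE_k$, which decomposes the per-epoch progress into $\gT_1$, $\gT_2$, two conjugate-strong-convexity terms $-\tfrac{\eta_k}{2n}\|\vy_k-\vv_k\|^2_{\mLambda_k^{-1}}$ and $-\tfrac{\eta_k}{2n}\|\vy_k-\vy_{*,k}\|^2_{\mLambda_k^{-1}}$, and the retraction term $-\tfrac{b}{2n}\sum_i\|\vx_{k-1,i}-\vx_{k-1,i+1}\|^2$. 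Since IG uses a fixed ordering, we can drop the permutation index on $\mLambda_k$, $\vv_k$, $\vy_{*,k}$, and the same algebraic identity used to prove Lemma~\ref{lemma:first-inner} applies verbatim (only its proof uses the update rule, not smoothness or any random-permutation structure), giving
\[
\gT_1 = \tfrac{b}{2n}\sum_{i=1}^m\|\vx_{k-1,i}-\vx_{k-1,i+1}\|^2 - \tfrac{b}{2n}\|\vx_{k-1}-\vx_k\|^2,
\]
which cancels the retraction term in~\eqref{eq:gap-bound-general} and leaves only a harmless $-\tfrac{b}{2n}\|\vx_{k-1}-\vx_k\|^2$.

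Next I would apply Lemma~\ref{lemma:second-inner-IGD-general} to bound $\gT_2$. The $\tfrac{\eta_k}{2n}\|\vv-\vy_k\|^2_{\mLambda^{-1}}$ term produced there is absorbed by the corresponding negative term from Lemma~\ref{lemma:gap-bound-general}, and under the step-size choice $\eta_k\leq\tfrac{b}{n\sqrt{2\hat L_0^g\tilde L_0^g}}$ the coefficient $\tfrac{\eta_k^3 n}{b^2}\hat L_0^g\tilde L_0^g-\tfrac{\eta_k}{2n}$ of $\|\vy_k-\vy_*\|^2_{\mLambda^{-1}}$ is non-positive, so that residual term drops. This yields
\[
\gE_k \leq \min\Big\{\tfrac{\eta_k^3 n}{b^2}\hat L_0^g\tilde L_0^g\|\vy_*\|^2_{\mLambda^{-1}},\;\tfrac{\eta_k^3(n-b)^2}{b^2}\tilde L_0^g\sigma_*^2\Big\}.
\]
Telescoping the definition $\gE_k=\eta_k\mathrm{Gap}^\vv(\vx_k,\vy_*)+\tfrac{b}{2n}(\|\vx_*-\vx_k\|^2-\|\vx_*-\vx_{k-1}\|^2)$ over $k=1,\dots,K$, using convexity of $\gL(\cdot,\vv)$ to move from averaged iterates to $\hat\vx_K$, and choosing $\vv=\vy_{\hat\vx_K}$ to turn the gap into $f(\hat\vx_K)-f(\vx_*)$, delivers the first claim.

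For the complexity, I would set $\eta_k\equiv\eta$ and split into the two regimes according to which branch of the $\min$ is active, just as in the proof of Theorem~\ref{thm:convergence-IGD}. In the branch $\hat L_0^g\|\vy_*\|^2_{\mLambda^{-1}}\leq\tfrac{(n-b)^2}{n}\sigma_*^2$, choose
\[
\eta=\min\!\Big\{\tfrac{b}{n\sqrt{2\hat L_0^g\tilde L_0^g}},\;\Big(\tfrac{b^3\|\vx_0-\vx_*\|^2}{2n^2\hat L_0^g\tilde L_0^g K\|\vy_*\|^2_{\mLambda^{-1}}}\Big)^{1/3}\Big\},
\]
analyze the ``small $K$'' and ``large $K$'' subcases separately (the former uses the step-size ceiling and the latter the cube-root term), and take the worst bound. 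In the complementary branch, use instead the balancing $\eta\sim\big(\tfrac{b^3\|\vx_0-\vx_*\|^2}{2n(n-b)^2\tilde L_0^g K\sigma_*^2}\big)^{1/3}$. Combining gives $f(\hat\vx_K)-f(\vx_*)=O\big(\tfrac{\sqrt{\hat L_0^g\tilde L_0^g}\|\vx_0-\vx_*\|^2}{K}+\tfrac{\min\{\cdot,\cdot\}\,\|\vx_0-\vx_*\|^{4/3}}{n^{1/3}K^{2/3}}\big)$, and inverting for $\epsilon$ yields the advertised $nK$ bound.

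I expect no single conceptual obstacle; the machinery is already assembled in Lemmas~\ref{lemma:gap-bound-general}, \ref{lemma:first-inner}, and~\ref{lemma:second-inner-IGD-general}. The main bookkeeping challenge will be the step-size tuning for the complexity: because the second-order term is a minimum of two expressions, one must do the small/large-$K$ case split twice and then take the min of the resulting complexities, being careful that the $\eta\leq\tfrac{b}{n\sqrt{2\hat L_0^g\tilde L_0^g}}$ ceiling is consistent with each cube-root choice so that both the leading $\tfrac{1}{\epsilon}$ and subleading $\tfrac{1}{\epsilon^{3/2}}$ terms emerge cleanly.
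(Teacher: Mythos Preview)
Your proposal is correct and follows essentially the same approach as the paper's own proof: combine Lemma~\ref{lemma:gap-bound-general} with Lemma~\ref{lemma:first-inner} (whose algebraic identity indeed carries over unchanged) and Lemma~\ref{lemma:second-inner-IGD-general}, use the step-size condition to kill the $\|\vy_k-\vy_*\|^2_{\mLambda^{-1}}$ coefficient, telescope, and then do the two-branch, small/large-$K$ step-size tuning exactly as in Theorem~\ref{thm:convergence-IGD}. Your anticipated bookkeeping (the doubled case split followed by taking the $\min$ of the resulting $\epsilon^{-3/2}$ terms) is precisely what the paper does.
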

\begin{proof}
Combining the bounds in Lemma~\ref{lemma:first-inner}~and~\ref{lemma:second-inner-IGD-general} and plugging them into Eq.~\eqref{eq:gap-bound-general} in Lemma~\ref{lemma:gap-bound-general} without random permutations, we have 
\begin{equation*}
    \gE_k \leq \Big(\frac{\eta_k^3 n \Hat{L}^g_0\Tilde{L}^g_0}{b^2} - \frac{\eta_k}{2n}\Big)\|\vy_k - \vy_{*}\|^2_{\mLambda^{-1}} + \min\Big\{\frac{\eta_k^3 n}{b^2}\Hat{L}^g_0\Tilde{L}^g_0\|\vy_{*}\|^2_{\mLambda^{-1}}, \frac{\eta_k^3 (n - b)^2}{b^2}\Tilde{L}^g_0\sigma_*^2\Big\}.
\end{equation*}
If $\eta_k \leq \frac{b}{n\sqrt{2\Hat{L}^g_0\Tilde{L}^g_0}}$, we have $\frac{\eta_k^3 n \Hat{L}^g_0\Tilde{L}^g_0}{b^2} - \frac{\eta_k}{2n} \leq 0$, thus 
\begin{equation*}
    \gE_k \leq \min\Big\{\frac{\eta_k^3 n}{b^2}\Hat{L}^g_0\Tilde{L}^g_0\|\vy_{*}\|^2_{\mLambda^{-1}}, \frac{\eta_k^3 (n - b)^2}{b^2}\Tilde{L}^g_0\sigma_*^2\Big\}. 
\end{equation*}
Using the definition of $\gE_k$ and telescoping from $k = 1$ to $K$, we obtain  
\begin{equation*}
    \sum_{k = 1}^K\eta_k\text{Gap}^\vv(\vx_{k}, \vy_*) \leq \frac{b}{2n}\|\vx_* - \vx_0\|_2^2 - \frac{b}{2n}\|\vx_* - \vx_{K}\|_2^2 + \sum_{k = 1}^K\min\Big\{\frac{\eta_k^3 n}{b^2}\Hat{L}^g_0\Tilde{L}^g_0\|\vy_{*}\|^2_{\mLambda^{-1}}, \frac{\eta_k^3 (n - b)^2}{b^2}\Tilde{L}^g_0\sigma_*^2\Big\}.
\end{equation*}
Noticing that $\gL(\vx, \vv)$ is convex w.r.t.\ $\vx$, we have $\text{Gap}^\vv(\Hat\vx_{K}, \vy_*) \leq \sum_{k = 1}^K\eta_k\text{Gap}^\vv(\vx_{k}, \vy_*) / H_K$, where $\Hat{\vx}_K = \sum_{k=1}^K \eta_k \vx_{k} / H_K$ and $H_K = \sum_{k = 1}^K\eta_k$, so we obtain 
\begin{equation*}
    H_K\text{Gap}^\vv(\Hat\vx_{K}, \vy_*) \leq \frac{b}{2n}\|\vx_0 - \vx_*\|_2^2 + \sum_{k=1}^K \min\Big\{\frac{\eta_k^3 n}{b^2}\Hat{L}^g_0\Tilde{L}^g_0\|\vy_{*}\|^2_{\mLambda^{-1}}, \frac{\eta_k^3 (n - b)^2}{b^2}\Tilde{L}^g_0\sigma_*^2\Big\}, 
\end{equation*}
Further choosing $\vv = \vy_{\Hat{\vx}_K}$, we obtain 
\begin{equation}\label{eq:final-bound-IGD-general}
\begin{aligned}
    H_K\big(f(\Hat{\vx}_K) - f(\vx_*)\big) \leq \frac{b}{2n}\|\vx_0 - \vx_*\|_2^2 + \sum_{k=1}^K \min\Big\{\frac{\eta_k^3 n}{b^2}\Hat{L}^g_0\Tilde{L}^g_0\|\vy_{*}\|^2_{\mLambda^{-1}}, \frac{\eta_k^3 (n - b)^2}{b^2}\Tilde{L}^g_0\sigma_*^2\Big\}.
\end{aligned}
\end{equation}

To analyze the individual gradient oracle complexity, we choose constant stepsizes $\eta \leq \frac{b}{n\sqrt{2\Hat{L}^g_0\Tilde{L}^g_0}}$ and assume $b < n$ without loss of generality, then Eq.~\eqref{eq:final-bound-IGD-general} becomes 
\begin{equation*}
\begin{aligned}
    f(\Hat{\vx}_K) - f(\vx_*) \leq \;& \frac{b}{2n\eta K}\|\vx_0 - \vx_*\|_2^2 + \min\Big\{\frac{\eta^2 n}{b^2}\Hat{L}^g_0\Tilde{L}^g_0\|\vy_{*}\|^2_{\mLambda^{-1}}, \frac{\eta^2 (n - b)^2}{b^2}\Tilde{L}^g_0\sigma_*^2\Big\}.
\end{aligned}
\end{equation*}
When $\Hat{L}^g_0\|\vy_{*}\|^2_{\mLambda^{-1}} \leq \frac{(n - b)^2}{n}\sigma_*^2$, we set $\eta = \min\Big\{\frac{b}{n\sqrt{2\Hat{L}^g_0\Tilde{L}^g_0}},\, \Big(\frac{b^3\|\vx_0 - \vx_*\|^2_2}{2n^2\Hat{L}^g_0\Tilde{L}^g_0K\|\vy_*\|_{\mLambda^{-1}}^2}\Big)^{1/3}\Big\}$ and consider the following two possible cases: 
\begin{itemize}[leftmargin=*]
    \item ``Small $K$'' case: if $\eta = \frac{b}{n\sqrt{2\Hat{L}^g_0\Tilde{L}^g_0}} \leq \Big(\frac{b^3\|\vx_0 - \vx_*\|^2_2}{2n^2\Hat{L}^g_0\Tilde{L}^g_0K\|\vy_*\|_{\mLambda^{-1}}^2}\Big)^{1/3}$, we have 
    \begin{equation*}
    \begin{aligned}
        f(\Hat{\vx}_K) - f(\vx_*) \leq \;& \frac{b}{2n\eta K}\|\vx_0 - \vx_*\|_2^2 + \frac{\eta^2 n}{b^2}\Hat{L}^g_0\Tilde{L}^g_0\|\vy_{*}\|^2_{\mLambda^{-1}} \\
        \leq \;& \frac{\sqrt{\Hat{L}^g_0\Tilde{L}^g_0}}{\sqrt{2}K}\|\vx_0 - \vx_*\|_2^2 + \frac{\big(\Hat{L}^g_0\Tilde{L}^g_0\big)^{1/3}\|\vy_{*}\|_{\mLambda^{-1}}^{2/3}\|\vx_0 - \vx_*\|^{4/3}_2}{2^{2/3}n^{1/3}K^{2/3}}.
    \end{aligned}
    \end{equation*}
    \item ``Large $K$'' case: if $\eta = \Big(\frac{b^3\|\vx_0 - \vx_*\|^2_2}{2n^2\Hat{L}^g_0\Tilde{L}^g_0K\|\vy_*\|_{\mLambda^{-1}}^2}\Big)^{1/3} \leq \frac{b}{\sqrt{2\Hat{L}^g_0\Tilde{L}^g_0}}$, we have 
    \begin{equation*}
    \begin{aligned}
        f(\Hat{\vx}_K) - f(\vx_*) \leq \;& \frac{b}{2n\eta K}\|\vx_0 - \vx_*\|_2^2 + \frac{\eta^2 n}{b^2}\Hat{L}^g_0\Tilde{L}^g_0\|\vy_{*}\|^2_{\mLambda^{-1}} \\
        \leq \;& \frac{2^{1/3}\big(\Hat{L}^g_0\Tilde{L}^g_0\big)^{1/3}\|\vy_{*}\|_{\mLambda^{-1}}^{2/3}\|\vx_0 - \vx_*\|^{4/3}_2}{n^{1/3}K^{2/3}}.
    \end{aligned}
    \end{equation*}
\end{itemize}
Combining these two cases, we have 
\begin{equation*}
    f(\Hat{\vx}_K) - f(\vx_*) \leq \frac{\sqrt{\Hat{L}^g_0\Tilde{L}^g_0}}{\sqrt{2}K}\|\vx_0 - \vx_*\|_2^2 + \frac{2^{1/3}\big(\Hat{L}^g_0\Tilde{L}^g_0\big)^{1/3}\|\vy_{*}\|_{\mLambda^{-1}}^{2/3}\|\vx_0 - \vx_*\|^{4/3}_2}{n^{1/3}K^{2/3}}. 
\end{equation*}
Hence, to guarantee $\E[f(\Hat{\vx}_K) - f(\vx_*)] \leq \epsilon$ for $\epsilon > 0$, the total number of required individual gradient evaluations will be 
\begin{equation}\label{eq:complexity-IGD-1-general}
    nK \geq \max\Big\{\frac{n\sqrt{2\Hat{L}^g_0\Tilde{L}^g_0}\|\vx_0 - \vx_*\|_2^2}{\epsilon}, \frac{4n^{1/2}\big(\Hat{L}^g_0\Tilde{L}^g_0\big)^{1/2}\|\vy_{*}\|_{\mLambda^{-1}}\|\vx_0 - \vx_*\|_2^2}{\epsilon^{3/2}}\Big\}. 
\end{equation}
When $\frac{(n - b)^2}{n}\sigma_*^2 \leq \Hat{L}^g_0\|\vy_*\|^2_{\mLambda^{-1}}$, we set $\eta = \min\Big\{\frac{b}{n\sqrt{2\Hat{L}^g_0\Tilde{L}^g_0}},\, \Big(\frac{b^3\|\vx_0 - \vx_*\|^2_2}{2n(n - b)^2\Tilde{L}^g_0K\sigma_*^2}\Big)^{1/3}\Big\}$ and consider the two cases as below: 
\begin{itemize}
    \item ``Small $K$'' case: if $\eta = \frac{b}{n\sqrt{2\Hat{L}^g_0\Tilde{L}^g_0}} \leq \Big(\frac{b^3\|\vx_0 - \vx_*\|^2_2}{2n(n - b)^2\Tilde{L}^g_0K\sigma_*^2}\Big)^{1/3}$, we have 
    \begin{equation*}
    \begin{aligned}
        f(\Hat{\vx}_K) - f(\vx_*) \leq \;& \frac{b}{2n\eta K}\|\vx_0 - \vx_*\|_2^2 +  \frac{\eta^2 (n - b)^2}{b^2}\Tilde{L}^g_0\sigma_*^2 \\
        \leq \;& \frac{\sqrt{\Hat{L}^g_0\Tilde{L}^g_0}}{\sqrt{2}K}\|\vx_0 - \vx_*\|_2^2 + \frac{(n - b)^{2/3}(\Tilde{L}^g)^{1/3}\sigma_*^{2/3}\|\vx_0 - \vx_*\|_2^{4/3}}{2^{2/3}n^{2/3}K^{2/3}}.
    \end{aligned}
    \end{equation*}
    \item ``Large $K$'' case: if $\eta = \Big(\frac{b^3\|\vx_0 - \vx_*\|^2_2}{2n(n - b)^2\Tilde{L}^g_0K\sigma_*^2}\Big)^{1/3} \leq \frac{b}{n\sqrt{2\Hat{L}^g_0\Tilde{L}^g_0}}$, we have 
    \begin{equation*}
    \begin{aligned}
        f(\Hat{\vx}_K) - f(\vx_*) \leq \;& \frac{b}{2n\eta K}\|\vx_0 - \vx_*\|_2^2 +  \frac{\eta^2 (n - b)^2}{b^2}\Tilde{L}^g_0\sigma_*^2 \\
        \leq \;& \frac{2^{1/3}(n - b)^{2/3}(\Tilde{L}^g)^{1/3}\sigma_*^{2/3}\|\vx_0 - \vx_*\|_2^{4/3}}{n^{2/3}K^{2/3}}.
    \end{aligned}
    \end{equation*}
\end{itemize}
Combining these two cases, we obtain 
\begin{equation*}
    f(\Hat{\vx}_K) - f(\vx_*) \leq \frac{\sqrt{\Hat{L}^g_0\Tilde{L}^g_0}}{\sqrt{2}K}\|\vx_0 - \vx_*\|_2^2 + \frac{2^{1/3}(n - b)^{2/3}(\Tilde{L}^g)^{1/3}\sigma_*^{2/3}\|\vx_0 - \vx_*\|_2^{4/3}}{n^{2/3}K^{2/3}}.
\end{equation*}
To guarantee $\E[f(\Hat{\vx}_K) - f(\vx_*)] \leq \epsilon$ for $\epsilon > 0$, the total number of required individual gradient evaluations will be 
\begin{equation}\label{eq:complexity-IGD-2-general}
    nK \geq \max\Big\{\frac{n\sqrt{2\Hat{L}^g_0\Tilde{L}^g_0}\|\vx_0 - \vx_*\|_2^2}{\epsilon}, \frac{4(n - b)(\Tilde{L}^g_0)^{1/2}\sigma_*\|\vx_0 - \vx_*\|_2^2}{\epsilon^{3/2}}\Big\}.
\end{equation}
Combining Eq.~\eqref{eq:complexity-IGD-1-general} and Eq.~\eqref{eq:complexity-IGD-2-general}, we finally have 
\begin{equation*}
    nK \geq \frac{n\sqrt{2\Hat{L}^g_0\Tilde{L}^g_0}\|\vx_0 - \vx_*\|_2^2}{\epsilon} + \min\Big\{\frac{4n^{1/2}\big(\Hat{L}^g_0\Tilde{L}^g_0\big)^{1/2}\|\vy_{*}\|_{\mLambda^{-1}}\|\vx_0 - \vx_*\|_2^2}{\epsilon^{3/2}}, \frac{4(n - b)(\Tilde{L}^g_0)^{1/2}\sigma_*\|\vx_0 - \vx_*\|_2^2}{\epsilon^{3/2}}\Big\}, 
\end{equation*}
thus finishing the proof.
\end{proof}

\section{Additional Experiments}\label{appx:exp}
\subsection{Evaluations of $L / \Tilde{L}_\pi$ on Synthetic Gaussian Datasets}
We first study the gap between $\Tilde{L}_\pi$ and $L$ for different batch sizes $b$, as shown in Figure~\ref{fig:gaussian-compare-tilde}. As in Section~\ref{sec:exp}, we focus on their dependence on the data matrix, and assume that the loss functions $\ell_i$ all have the same smoothness constant. In this case, the ratio $L/\Tilde{L}_\pi$ that characterizes the gap between $\Tilde{L}_\pi$ and $L$ will become $L / \Tilde{L}_\pi = (\max_{1\leq i \leq n}\{\|\va_i\|_2^2\})/\big(\frac{1}{b}\|\sum_{j=1}^m\mI_{(j)}\mA_{{\pi}}\mA_{{\pi}}^{\top}\mI_{(j)}\|_2\big)$. In particular, we run experiments on standard Gaussian data of size $(n, d)$. We fix the dimension $d = 500$, and vary the number of samples with $n = 100, 500, 1000, 2000$. In Figure~\ref{fig:gaussian-compare-tilde}, we plot the ratio $L / \Tilde{L}_{\pi}$ versus the batch size $b$ for $100$ different random permutations $\pi$, where the dotted lines represent the mean values and the filled regions indicate the standard deviation of permutations. We observe that the ratio $L / \Tilde{L}_{\pi}$ is concentrated around its empirical mean and exhibits $b^\alpha$ ($\alpha \in [0.74, 0.87]$) growth as the batch size $b$ increases. In particular, if we choose $b = \sqrt{n}$, the ratio can be $\gO(n^{0.4})$.

\begin{figure}[t!]
    % \centering
    \hspace*{\fill}
    \subfloat[$n = 100$]{\includegraphics[width=0.4\textwidth]{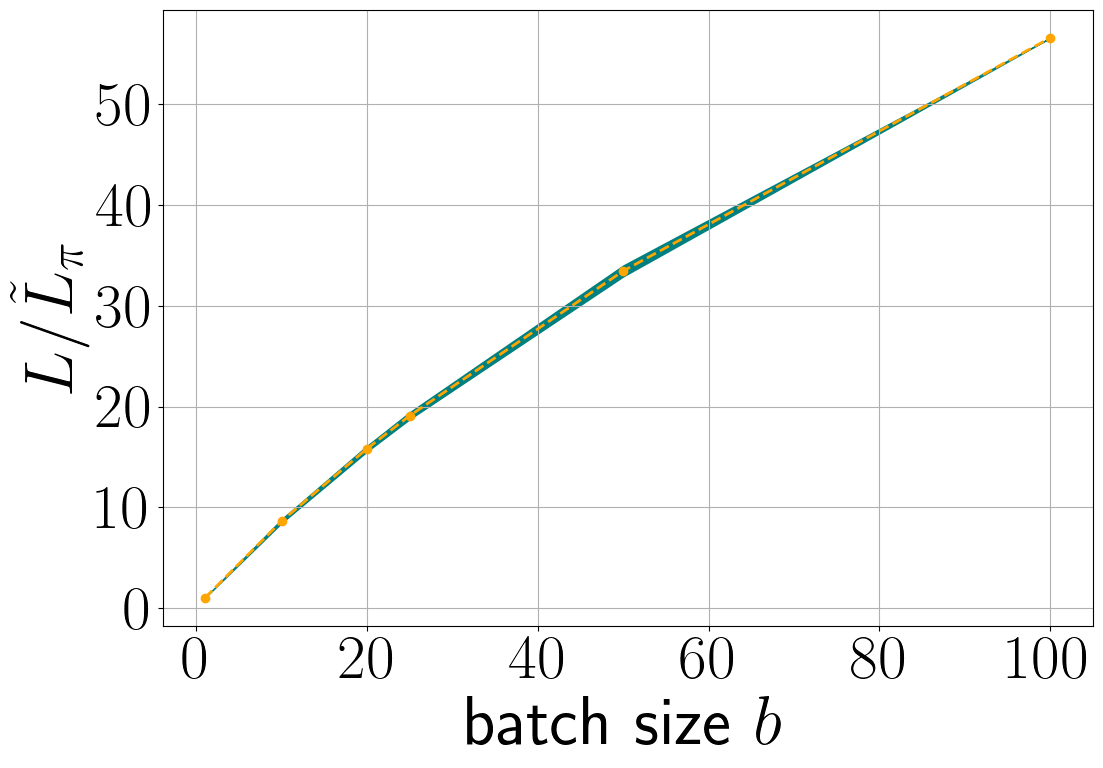}} \hfill
    % \hspace{\fill}
    \subfloat[$n = 500$]{\includegraphics[width=0.4\textwidth]{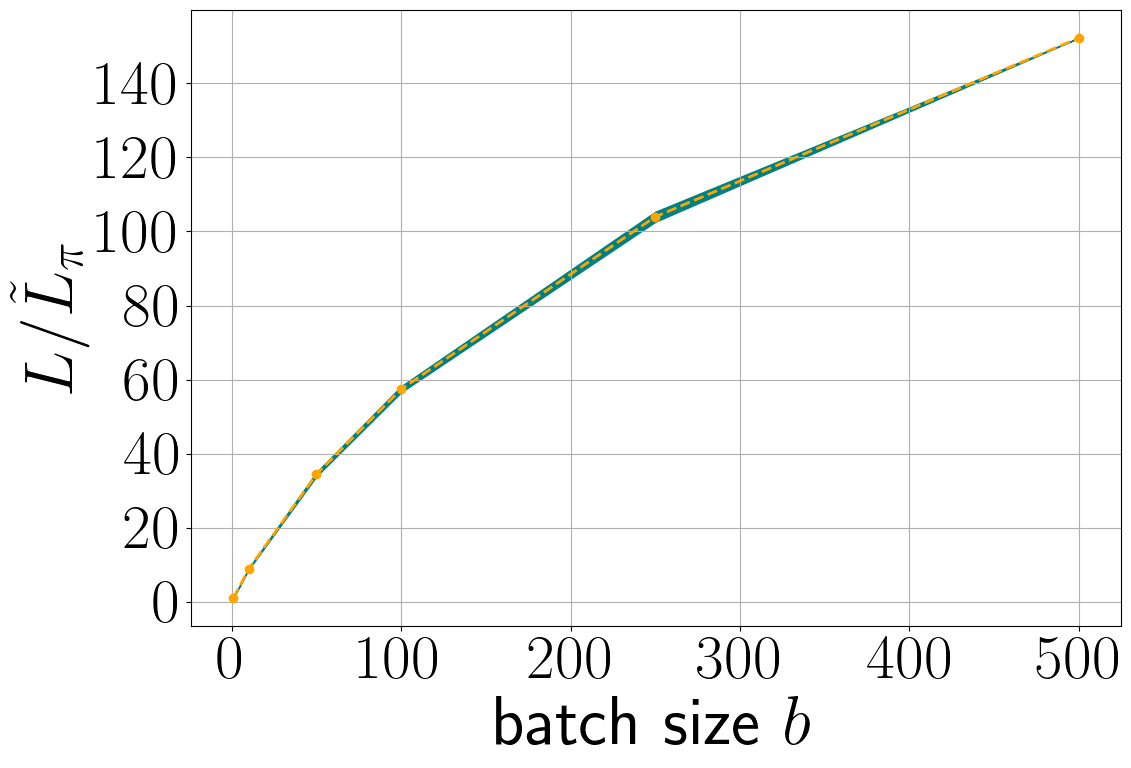}}\hspace*{\fill}\\
    \hspace*{\fill}
    \subfloat[$n = 1000$]{\includegraphics[width=0.4\textwidth]{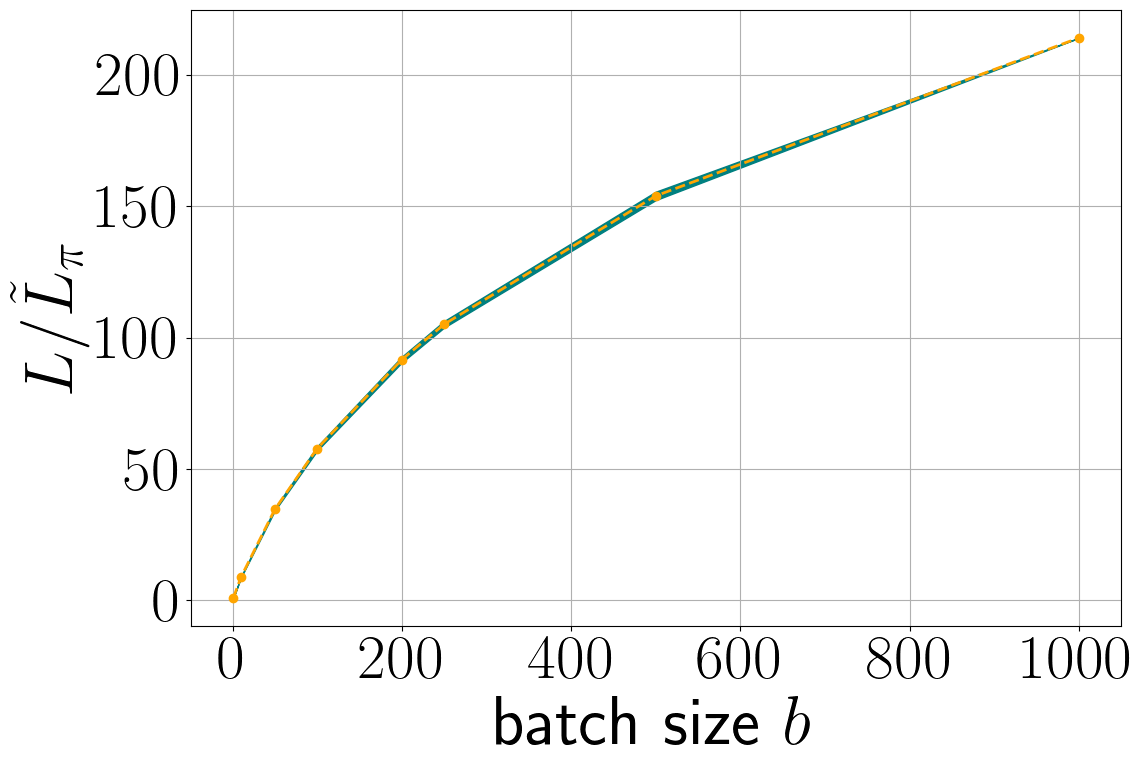}}\hfill
    \subfloat[$n = 2000$]{\includegraphics[width=0.4\textwidth]{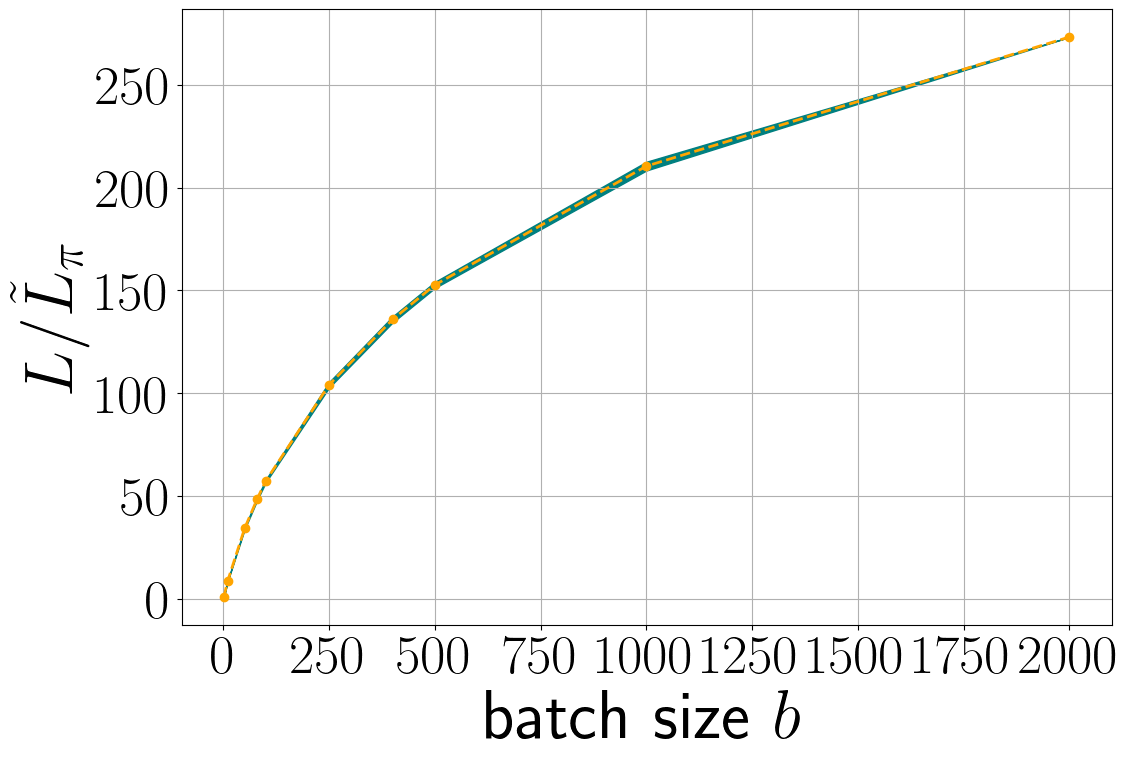}}\hspace*{\fill}
    \caption{Illustrations of $L / \Tilde{L}_\pi$ for different batch size $b$ on synthetic Gaussian data of size $(n, d)$.}
    \label{fig:gaussian-compare-tilde}
\end{figure}

\subsection{Distributions of $L / \hat{L}_\pi$}

In this subsection, we include histograms in Figure~\ref{fig:histograms-L} to illustrate the spread of $L / \hat{L}_\pi$ with respect to random permutations, for completeness. We observe that in all the examples $L / \hat{L}_\pi$ is concentrated around its empirical mean. The following plots are normalized, with y-axis representing the empirical probability density. The x-axis represents $L / \hat{L}_\pi$.

\begin{figure*}[ht!]
\captionsetup[subfigure]{labelformat=empty}
    \centering
    %\vspace{-10pt}
    {\subfloat[\texttt{a1a}]{\includegraphics[width=0.31\textwidth]{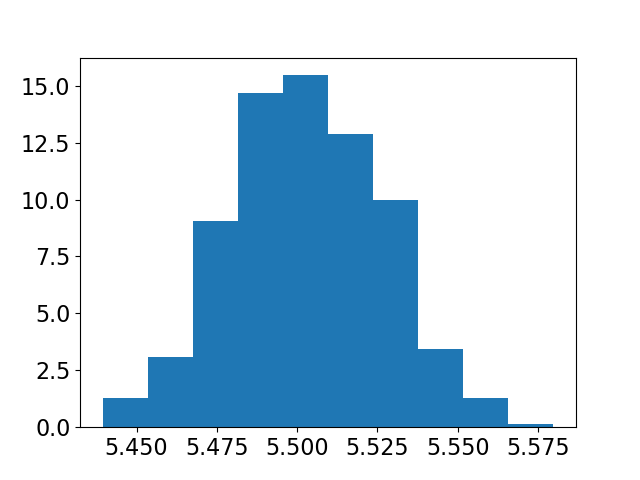}}}
    \hspace{\fill}
    {\subfloat[\texttt{a9a}]{\includegraphics[width=0.31\textwidth]{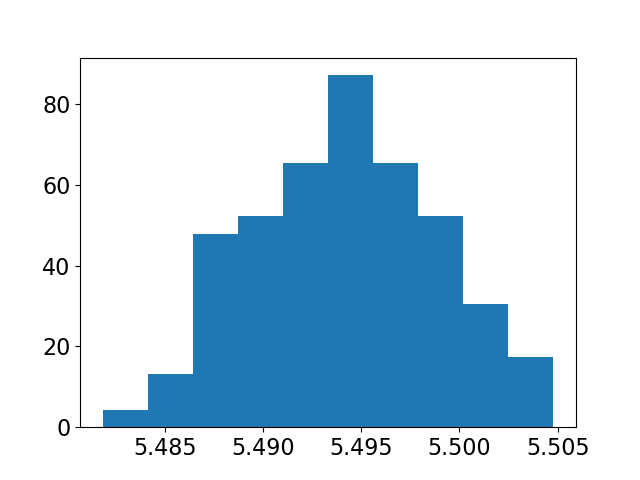}}}
    \hspace*{\fill}
    {\subfloat[\texttt{BBBC005}]{\includegraphics[width=0.31\textwidth]{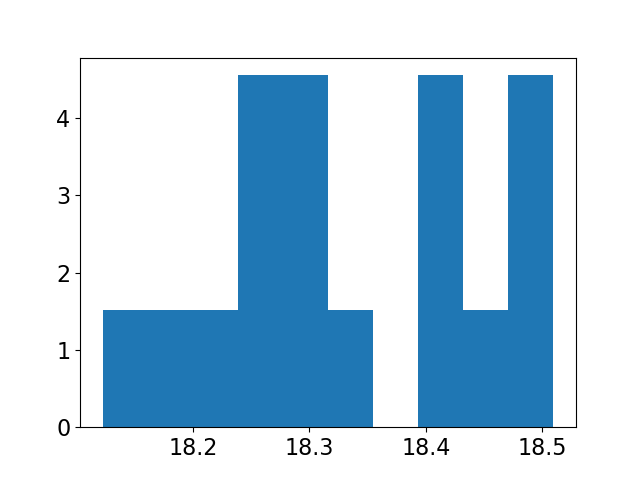}}}
    
    \hspace*{\fill}
    \vspace{-10pt}
    
    {\subfloat[\texttt{BBBC010}]{\includegraphics[width=0.31\textwidth]{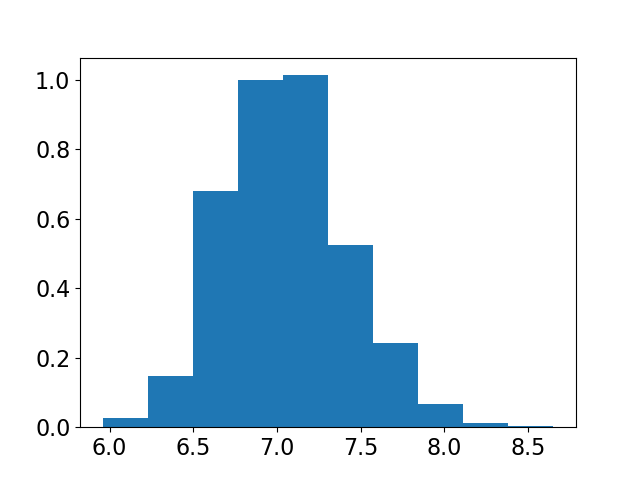}}}
    \hspace{\fill}
    {\subfloat[\texttt{CIFAR10}]{\includegraphics[width=0.31\textwidth]{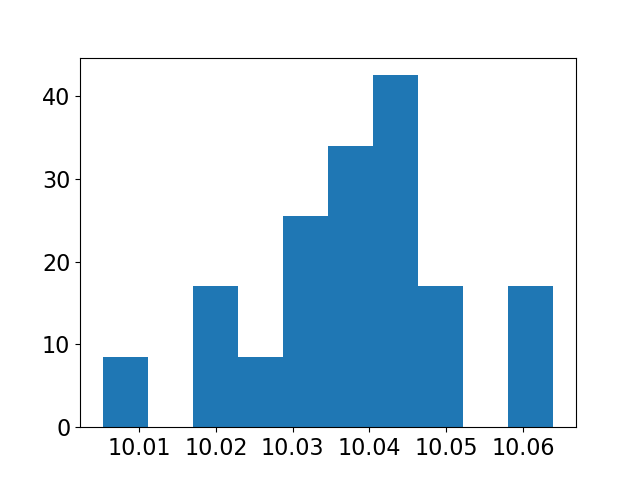}}}
    \hspace*{\fill}
    {\subfloat[\texttt{duke}]{\includegraphics[width=0.31\textwidth]{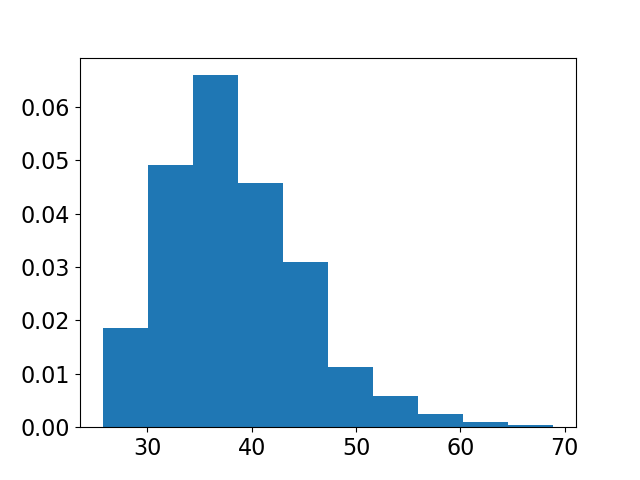}}}

    \hspace*{\fill}
    \vspace{-10pt}
    
    {\subfloat[\texttt{e2006train}]{\includegraphics[width=0.31\textwidth]{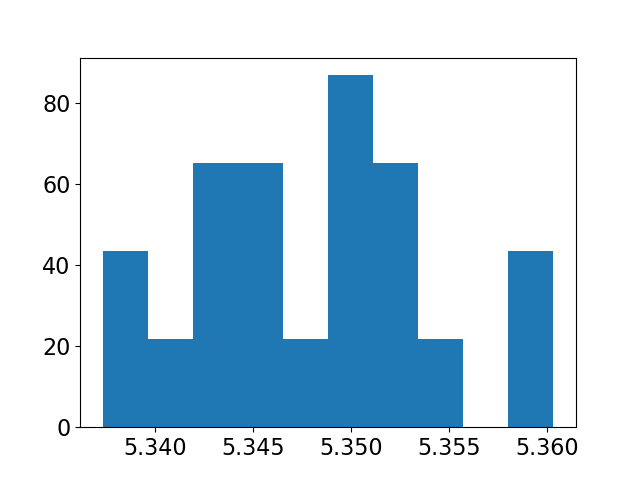}}}
    \hspace{\fill}
    {\subfloat[\texttt{gisette}]{\includegraphics[width=0.31\textwidth]{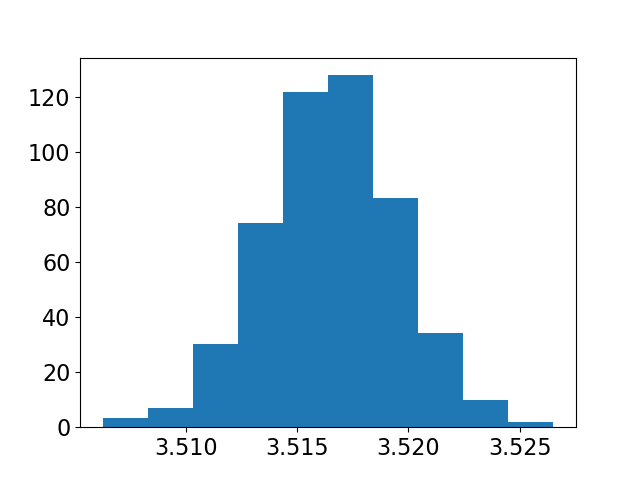}}}
    \hspace*{\fill}
    {\subfloat[\texttt{leu}]{\includegraphics[width=0.31\textwidth]{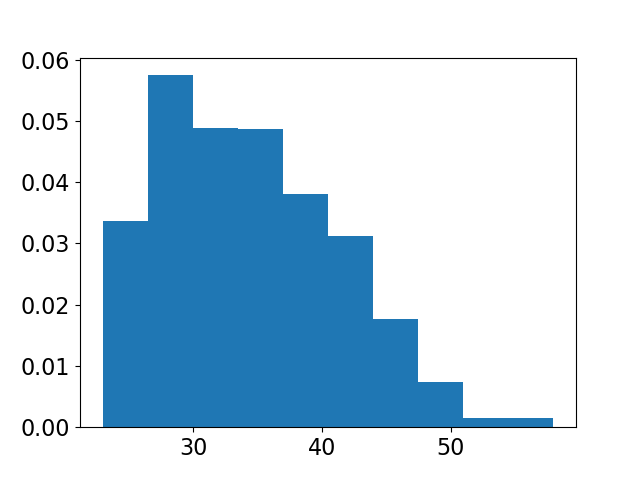}}}

    \hspace*{\fill}
    \vspace{-10pt}
    
    {\subfloat[\texttt{MNIST}]{\includegraphics[width=0.31\textwidth]{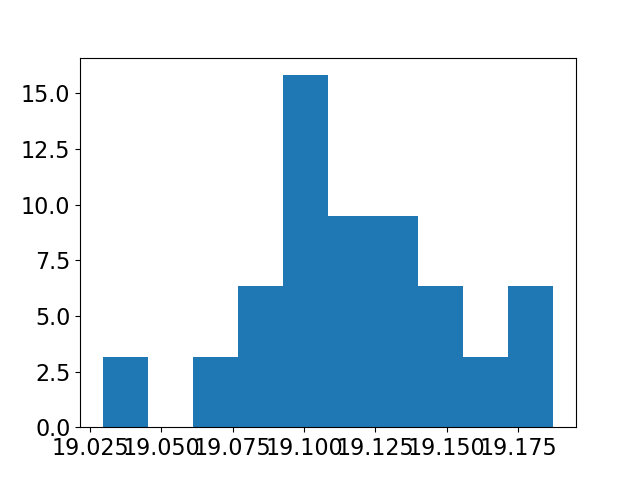}}}
    \hspace{\fill}
    {\subfloat[\texttt{news20}]{\includegraphics[width=0.31\textwidth]{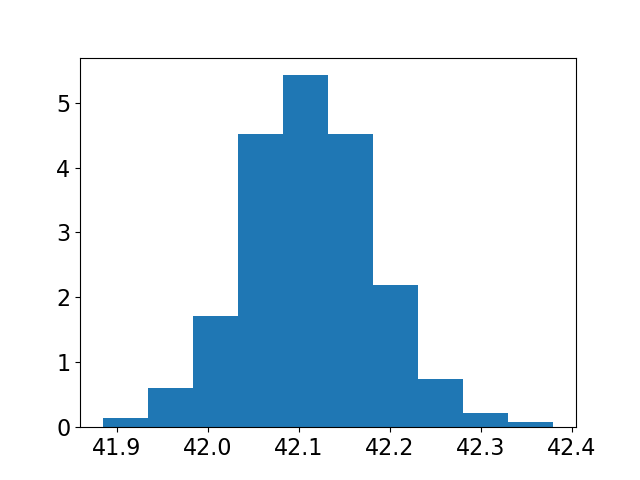}}}
    \hspace*{\fill}
    {\subfloat[\texttt{rcv1}]{\includegraphics[width=0.31\textwidth]{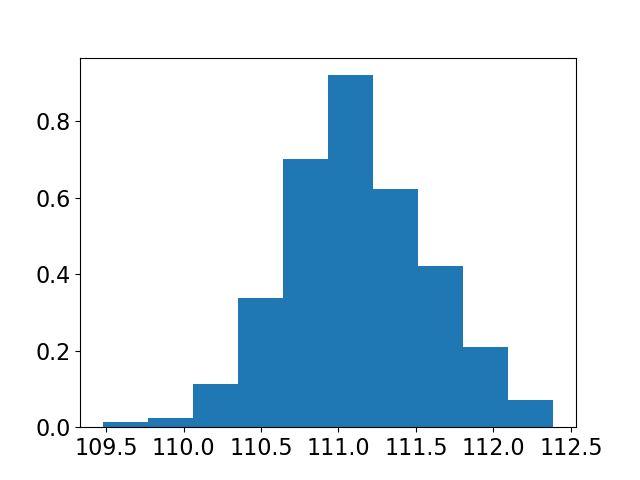}}}

    \hspace*{\fill}
    \vspace{-10pt}
    
    {\subfloat[\texttt{real-sim}]{\includegraphics[width=0.31\textwidth]{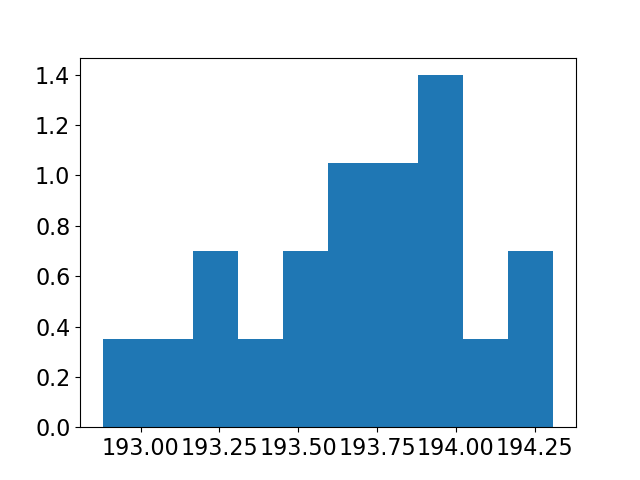}}}
    \hspace{\fill}
    {\subfloat[\texttt{sonar}]{\includegraphics[width=0.31\textwidth]{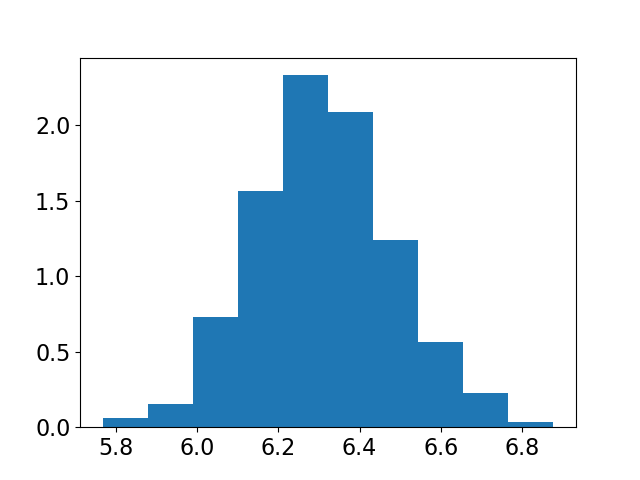}}}
    \hspace*{\fill}
    {\subfloat[\texttt{tmc2007}]{\includegraphics[width=0.31\textwidth]{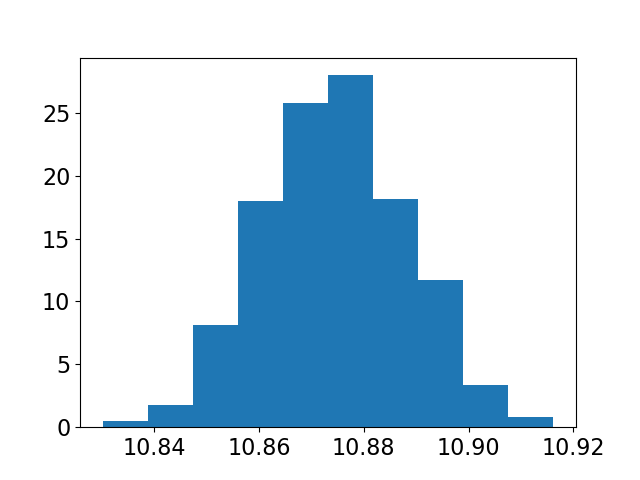}}}
    \caption{Visualization of the empirical distributions of $L / \hat{L}$ for $15$ large-scale datasets.}%
    \label{fig:histograms-L}
\end{figure*}

\end{document}